\documentclass[12pt]{amsart}
\usepackage{amsmath}
\usepackage{tikz}
\usetikzlibrary{calc} 
\usepackage{amsthm}
\usepackage{amssymb}
\usepackage{graphicx}
\usepackage{blindtext}
\usepackage{hyperref}
\numberwithin{equation}{section}

\usepackage[a4paper,top=4cm,bottom=3.5 cm,left=3.5 cm,right=3.5cm]{geometry}

\makeatletter
\@namedef{subjclassname@2020}{\textup{2020} Mathematics Subject Classification}
\makeatother

\title[Torsional Rigidity on Metric Graphs with $\delta$-Conditions]
{Torsional Rigidity on Metric Graphs with Delta-Vertex Conditions}

\author[S.~Özcan]{Sedef Özcan} 
\author[M.~Täufer]{Matthias Täufer}
\address{Sedef Özcan, Dokuz Eylül University, Faculty of Science, Department of Mathematics, Izmir, Turkey}
\address{Matthias Täufer, Fakult\"at Mathematik und Informatik, Fern\-Universit\"at in Hagen, D-58084 Hagen, Germany}

\thanks{\emph{Acknowledgements.} The authors would like to thank Delio Mugnolo for suggesting this research direction and for numerous helpful discussions. 
S.Ö. acknowledges support by COST Action 18232 MAT-DYN-NET, supported by COST (European Cooperation in Science and Technology), www.cost.eu.
M.T. was supported by Flexibler Fonds Nachwuchs of FernUniversität in Hagen.
}

\keywords{Delta-Vertex Conditions, Landscape Function, Quantum Graphs, Torsional Rigidity}

\subjclass[2020]{Primary: 34B45. Secondary: 05C50, 35P15, 81Q35}

\newtheorem{theorem}{Theorem}[section]
\newtheorem{notation}{Notation}[section]
\newtheorem{lemma}[theorem]{Lemma}
\newtheorem{proposition}[theorem]{Proposition}

\newtheorem{corollary}[theorem]{Corollary}
\newtheorem{definition}[theorem]{Definition}
\newtheorem{example}[theorem]{Example}
\newtheorem{remark}[theorem]{Remark}
\date{\today}

\begin{document}

\begin{abstract}
	We investigate the \emph{torsion function} or \emph{landscape function} and its integral, the torsional rigidity, of Laplacians on metric graphs subject to $\delta$-vertex conditions. 
	A variational characterization of torsional rigidity and Hadamard-type formulas are obtained, enabling the derivation of surgical principles. 
	We use these principles to prove upper and lower bounds on the torsional rigidity and identify graphs maximizing and minimizing torsional rigidity among classes of graphs. 
	We also investigate the question of positivity of the torsion function and reduce it to positivity of the spectrum of a particular discrete, weighted Laplacian.
	Additionally, we explore potential manifestations of Kohler-Jobin-type inequalities in the context of $\delta$-vertex conditions.
\end{abstract}

\maketitle

\section{Introduction}
The torsional rigidity is a spectral-geometric quantity, closely related to the lowest eigenvalue of the Laplacian.
It is defined as the $L^1$-norm of the so-called \emph{torsion function} which in recent years has gained notoriety as the so-called \emph{landscape function}, allowing to deduce pointwise bounds on eigenfunctions.
In this work, we investigate the torsion function and torsional rigidity on metric graphs with $\delta$-vertex conditions.

In the past decades, there has been extensive research on differential operators on metric graphs, also referred to as quantum graphs.
They serve as one-dimensional models for networks or quasi-one-dimensional structures such as nanotubes or waveguides. A comprehensive overview can be found in \cite[Section 7.6]{berk3}.
A substantial body of literature has emerged on the spectral theory of self-adjoint Schrödinger operators on metric graphs, see~\cite{berk3,kost1,kost2,kuch1,kuch2} for a non-exhaustive list.
Recently, there has been a particular focus on {spectral geometry on metric graphs, that is} the question how the geometry or topology of the graph will affect Laplacian eigenvalues.
A quantity, which essentially behaves as a counterpart to the lowest Laplacian eigenvalue is the torsional rigidity.
Indeed, historically, many questions in the spectral geometry of domains such as the Faber-Krahn inequality~\cite{Faber-23, Krahn-25} have experienced parallel developments to analogous developments for the torsional rigidity.
Nevertheless, investigations of the torsional rigidity on metric graphs, which are nowadays also often used as toy models to build intuition before proceeding to tackle nontrivial questions on domains, have been limited so far, with the only contribution known to us in~\cite{pluem,colla}.

Therefore, in this article, we seek to advance the theory of torsional rigidity and in particular its connection with the geometry of the graph.
Our novelty is that we allow for $\delta$-vertex conditions at vertices which will lead to a much richer theory.
More precisely the \emph{torsion function} of a metric graph $\mathcal{G}$ with vertex set $\mathsf{V}$ and edge set $\mathsf{E}$ is the unique solution $\upsilon$ to the elliptic problem
\begin{equation}
	\begin{cases}
		-\Delta \upsilon(x) =1,& x \in \mathcal{G},\\
		\sum_{\mathsf{e} \in\mathsf{E}_{\mathsf{v}} }\frac{\partial \upsilon_\mathsf{e}}{\partial n}(\mathsf{v})+\alpha_{\mathsf{v}} \upsilon(\mathsf{v} )=0,&  \mathsf{v} \in \mathsf{V},
	\end{cases}
\end{equation}
{where $\alpha_{\mathsf{v}}$ is a real number -- for simplicity, one can assume all $\alpha_{\mathsf{v}}$ strictly positive even though this can be relaxed --} for each vertex $\mathsf{v}\in \mathsf{V}$, called \emph{strength of the vertex} and $\mathsf{E}_{\mathsf{v}}$ is the set of egdes connected to the vertex $\mathsf{v}$.
The \emph{torsional rigidity} is the $L^{1}$-norm of the torsion function
\begin{equation}
	T(\mathcal{G}):= \lVert \upsilon \rVert_{L^{1}(\mathcal{G})}.
\end{equation}
Initially, it arose in mechanics as the $L^{1}$-norm of the solution $\upsilon$ of
$$\begin{cases}
	-\Delta \upsilon(x) =1, &x \in \Omega,\\
	\upsilon(z)=0, & z \in \partial \Omega,
\end{cases}$$
for open, bounded domains $\Omega\subset\mathbb{R}^{2}$.
However, P{\'o}lya identified it as a geometric constant depending on the shape and size of $\Omega$  in~\cite{pol1}. He demonstrated that among all open bounded domains, the circular domains maximise torsional rigidity.

In recent years, the torsion function has also made a renaissance, on the one hand in more general situations such as for non-local operators~\cite{MazonT-23} as the \emph{landscape function} leading to pointwise bounds on eigenfunctions in quantum systems~\cite{Steinerberger-17, ArnoldDFJM-19, HarrellM-20, ken2, mug2}, and yielding new insights in phenomena in mathematical physics such as Anderson localization~\cite{FilocheM-12}.

In~\cite{pluem}, Mugnolo and Plümer developed a theory of torsional rigidity for the Laplacian on metric graphs with at least one Dirichlet vertex.
Note that the assumption of at least one Dirichlet vertex is necessary for otherwise, the Laplacian will not be invertible and the torsion function will not be defined, let alone positive.
In the presence of $\delta$-vertex conditions we first have to investigate under which conditions on strengths, the existence and positivity of the torsion function can be ensured.
Indeed, we shall see that the question of existence of a positive torsion function boils down to positivity of the spectrum of a suitable weighted discrete Laplacian with $\delta$-vertex conditions.

Bounds on the torsional rigidity of the Laplacian on domains with Robin boundary conditions have been investigated in~\cite{ciac,kost1}.
For quantum graphs, we will likewise establish sharp lower and upper bounds on the torsional rigidity on all quantum graphs with a specified total length.
In particular, it will turn out that flower graphs and path graphs take roles of minimizers and maximizers in certain situations but one has to be careful where one puts the strengths.

Our findings might also have interesting consequences for and develop some new insight into possible Kohler-Jobin-type estimates which we discuss in Section~\ref{Koh-job}.

This article is structured as follows: In Section~\ref{pre}, we define the torsional rigidity on quantum graphs for Laplacian with $\delta$-vertex conditions.
In Section~\ref{secvarchar}, we derive a variational characterization of torsional rigidity.
In Section~\ref{sec:Dirichlet}, we extend the theory to mixed Dirichlet and $\delta$-vertex conditions.
In Section~\ref{seclin} we investigate the relationship between vertex strengths and the positivity of the torsion function.
In particular, we reduce the question of positivity the torsion function to the question of positivity of the spectrum of a weighted \emph{discrete Laplace operator}.
We then analyze in Section~\ref{sec:hadamard} how the torsional rigidity responds to perturbations in edge lengths and strengths.
In Section~\ref{surpri} we study the behaviour of the torsional rigidity under surgery principles for the torsional rigidity on quantum graph operations such as lengthening or shortening an edge, connecting or cutting vertices, adding a new graph to the graph from one of its vertices, etc.
In Section~\ref{est} we establish lower and upper bounds on the torsional rigidity.
We conclude the article with a discussion on possible forms of Kohler-Jobin-type inequalities on torsional rigidity and the first Laplacian eigenvalue in the presence of $\delta$-vertex conditions underscoring that estimates on the torsional rigidity are intimately related with spectral estimates.

\section{Torsional Rigidity}\label{pre}

\begin{definition}
	A \emph{metric graph} $\mathcal{G}$ is a finite collection of intervals, topologically glued together at their endpoints according to the structure of a combinatorial graph $\mathsf{G} = (\mathsf{V}, \mathsf{E})$ with finite vertex and edge set $\mathsf{V}$ and $\mathsf{E}$, respectively.
	Every edge $\mathsf{e} \in \mathsf{E}$ is identified with an interval $[0, \ell_{\mathsf{e}}]$ where $\ell_{\mathsf{e}} \in (0, \infty)$ is the \emph{edge length}.
\end{definition}

Metric graphs are also called "topological networks"~\cite{nica}. 
They inherit the shortest-math metric and the Lebesgue measure, see~\cite{mug3} for more details.
In the recent years, also \emph{infinite metric graphs} have been studied intensively in which the case the edge set and vertex sets are no longer finite, see e.g.~\cite{KostenkoN-19,KostenkoMN-22} and references therein.
However, infinite metric graphs can introduce new phenomena where already the definition of a Laplace operator is a touchy issue and even the question of discreteness of the spectrum, or its strict positivity is more subtle~\cite{DuefelKennedyMugnoloPluemerTaeufer-22, KennedyMugnoloTaeufer-2024_preprint}.
Therefore, in this article, we always assume in this article that the edge set $\mathsf{E}$ is finite.
Let us start with some basic definitions:

\begin{definition}\label{assump}
	 A metric graph $\mathcal{G}$ is \emph{connected} if for any $x, y \in \mathcal{G}$, there is a continuous path $\mathcal{P}$ connecting $x$ and $y$.
	 \\
For a vertex $\mathsf{v} \in \mathsf{V} $, let $\mathsf{E}_{\mathsf{v}}$ denote the edges adjacent to it and $\operatorname{deg}_{\mathcal{G}}(\mathsf{v} ) := \lvert \mathsf{E}_{\mathsf{v}} \rvert$ its \emph{degree}. 
	\\
The \emph{shortest-path metric} on $\mathcal{G}$ is denoted by $\operatorname{dist}_{\mathcal{G}}:\mathcal{G} \times \mathcal{G}\rightarrow [0,\infty)$. 
	\\
The \emph{total length} $\lvert \mathcal{G} \rvert$ of $\mathcal{G}$ is the sum of all its edge lengths.

\end{definition}

Throughout this article, for a function $u \colon \mathcal{G} \to \mathbb{C} $, $u_{\mathsf{e}}$ denotes its restriction to the edge $\mathsf{e}$.
We will often need derivatives on edges at their end points and adhere to the following convention:
\begin{notation} 
We write $\frac{\partial u_{\mathsf{e}} }{\partial n}(\mathsf{v} )$ for the value of the derivative of $u$ on the edge $\mathsf{e}$ at its endpoint $\mathsf{v} $, pointing into $\mathsf{v} $.
\end{notation}

We define the Sobolev space
\begin{equation*}
	H^{1}(\mathcal{G})
	=
	\left\lbrace  u=(u_{\mathsf{e}})_{\mathsf{e} \in \mathsf{E}} \in \bigoplus_{\mathsf{e}\in \mathsf{E}} H^{1}(0,\ell_{\mathsf{e}}) \ \Big\vert \ \text{$u$ continuous in every $\mathsf{v} \in \mathsf{V}$}  \right\rbrace, 
\end{equation*}
and for $\alpha \in \mathbb{R}^{\mathsf{V}}$ the quadratic form $h_{\alpha} = h_{\mathcal{G}, \alpha}$ with domain $H^1(\mathcal{G})$ as
\begin{equation*}
	h_{\alpha}(u)
	=
	\int_{\mathcal{G}}\lvert u'(x) \rvert^{2}\mathrm{d} x+\sum_{\mathsf{v}  \in \mathsf{V}}\alpha_{\mathsf{v} } \lvert u(\mathsf{v} ) \rvert^{2}.
\end{equation*}
This gives rise to a self-adjoint operator in $L^2(\mathcal{G})$, called the \emph{Laplacian with $\delta$-vertex conditions}, $\Delta_{\mathcal{G}, \alpha}$.
Its domain consists functions which are edgewise in the $H^2(0, \ell_{\mathsf{e}})$ Sobolov space and satisfy the vertex conditions
\begin{equation}
\label{vertexcond_Dir}
\begin{cases}
	\text{$u$ is continuous } \text{at every $\mathsf{v} \in \mathsf{V}$},
	\\
	\sum_{\mathsf{e} \in \mathsf{E}_{\mathsf{v} }}\frac{\partial u_{\mathsf{e}} }{\partial n}(\mathsf{v} )+\alpha_{\mathsf{v} } u(\mathsf{v} )=0
	
	\text{ for all $\mathsf{v}  \in \mathsf{V}$}.
\end{cases}
\end{equation}
If $\alpha_{\mathsf{v}} = 0$, we recover the usual \emph{standard vertex condition} or \emph{Kirchhoff-Neumann condition} at $\mathsf{v}$.
Before defining the torsion function $\upsilon$ let us ensure its existence.	
 
\begin{proposition}\label{postor}
Let $\mathcal{G}$ be a connected metric graph and $\alpha \in [0, \infty)^{\mathsf{V}}$.
If at least one $\alpha_{\mathsf{v}}$ is strictly positive, then the equation
\begin{equation}\label{torrob}
	-\Delta_{\mathcal{G}, \alpha} \upsilon =1.
\end{equation}
has a unique strictly positive solution in $H^1(\mathcal{G})$.
\end{proposition}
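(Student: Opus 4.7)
The plan is to first establish existence and uniqueness via a Lax--Milgram/Riesz argument applied to the form $h_\alpha$ on $H^1(\mathcal{G})$, and then to obtain strict positivity by a truncation test function followed by a direct concavity/vertex-derivative argument.

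First I would verify that $h_\alpha$ is coercive on $H^1(\mathcal{G})$ under the stated assumptions, i.e.\ that $h_\alpha(u) \ge c \lVert u \rVert_{H^1}^2$ for some $c>0$. Suppose this fails; then one finds $u_n \in H^1(\mathcal{G})$ with $\lVert u_n \rVert_{H^1}=1$, $\lVert u_n' \rVert_{L^2}\to 0$, and $\sum_\mathsf{v}\alpha_\mathsf{v}|u_n(\mathsf{v})|^2\to 0$. Using the compact embedding $H^1(\mathcal{G})\hookrightarrow L^2(\mathcal{G})$ (available since $\mathcal{G}$ has finite total length), a subsequence converges in $L^2$, and then in $H^1$ to some constant function $u\equiv c$ on the connected graph $\mathcal{G}$. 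The vertex sum forces $|c|^2\sum_\mathsf{v}\alpha_\mathsf{v}=0$, hence $c=0$, contradicting $\lVert u \rVert_{H^1}=1$. With coercivity in hand, Lax--Milgram (or the Riesz representation theorem on $H^1(\mathcal{G})$ endowed with the inner product $h_\alpha(\cdot,\cdot)$) yields a unique $\upsilon\in H^1(\mathcal{G})$ satisfying $h_\alpha(\upsilon,\varphi)=\int_\mathcal{G}\varphi$ for every $\varphi\in H^1(\mathcal{G})$. Choosing $\varphi\in C_c^\infty(\mathsf{e})$ on a single edge gives $-\upsilon_\mathsf{e}''=1$ in the weak and thus classical sense on each edge, so $\upsilon_\mathsf{e}\in H^2(0,\ell_\mathsf{e})$; integrating by parts edge by edge and comparing boundary contributions recovers the $\delta$-vertex conditions~\eqref{vertexcond_Dir}, so $\upsilon\in\operatorname{dom}(\Delta_{\mathcal{G},\alpha})$ solves~\eqref{torrob}.

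For nonnegativity I would use $\varphi = \upsilon_- := \max(-\upsilon,0)\in H^1(\mathcal{G})$ as a test function. A short computation gives $\upsilon'\,\upsilon_-' = -|\upsilon_-'|^2$ a.e.\ and $\upsilon(\mathsf{v})\upsilon_-(\mathsf{v}) = -|\upsilon_-(\mathsf{v})|^2$, so that
\[
-\lVert \upsilon_-'\rVert_{L^2}^2 - \sum_\mathsf{v}\alpha_\mathsf{v}|\upsilon_-(\mathsf{v})|^2 \;=\; h_\alpha(\upsilon,\upsilon_-) \;=\; \int_\mathcal{G}\upsilon_- \;\ge\;0.
\]
Since $\alpha_\mathsf{v}\ge0$ the left-hand side is nonpositive, hence both sides vanish and $\upsilon_-\equiv0$, i.e.\ $\upsilon\ge0$.

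The main subtlety, and the only place where one must be careful, is the strict positivity. Since $-\upsilon_\mathsf{e}''=1$, the restriction $\upsilon_\mathsf{e}$ is a strictly concave quadratic on each edge, so its minimum on $[0,\ell_\mathsf{e}]$ is attained at an endpoint; therefore $\min_\mathcal{G}\upsilon$ is attained at some vertex $\mathsf{v}_0$. Assume for contradiction $\upsilon(\mathsf{v}_0)=0$. Then on each $\mathsf{e}\in\mathsf{E}_{\mathsf{v}_0}$, the function $\upsilon_\mathsf{e}$ attains its minimum $0$ at $\mathsf{v}_0$, which (with our convention for the derivative pointing into $\mathsf{v}_0$) yields $\frac{\partial\upsilon_\mathsf{e}}{\partial n}(\mathsf{v}_0)\le0$. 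Summing and invoking the vertex condition gives $0\ge\sum_{\mathsf{e}\in\mathsf{E}_{\mathsf{v}_0}}\frac{\partial\upsilon_\mathsf{e}}{\partial n}(\mathsf{v}_0) = -\alpha_{\mathsf{v}_0}\upsilon(\mathsf{v}_0)=0$, so each summand vanishes. Thus on every $\mathsf{e}\in\mathsf{E}_{\mathsf{v}_0}$ the data $\upsilon_\mathsf{e}(\mathsf{v}_0)=0$, $\upsilon_\mathsf{e}'(\mathsf{v}_0)=0$, and $\upsilon_\mathsf{e}''=-1$ force $\upsilon_\mathsf{e}(x) = -x^2/2<0$ for small $x>0$ (measured from $\mathsf{v}_0$), contradicting $\upsilon\ge0$. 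Hence $\upsilon(\mathsf{v})>0$ at every vertex, and by the endpoint-minimum property of strictly concave parabolas also $\upsilon>0$ in the interior of every edge. The main obstacle is precisely this last step: the usual strong maximum principle must be adapted to the vertex conditions, and the $\delta$-condition together with $\alpha_\mathsf{v}\ge0$ is exactly what makes the vertex contribution harmless; for $\alpha_\mathsf{v}<0$ large in modulus the argument (and indeed positivity itself) would fail, foreshadowing the discussion in Section~\ref{seclin}.
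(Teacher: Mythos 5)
Your proposal is correct, but it takes a genuinely different route from the paper. The paper's proof shows that the bottom of the spectrum of $-\Delta_{\mathcal{G},\alpha}$ is strictly positive via an explicit Poincar\'e-type estimate (a case distinction on the size of $\lvert u(\mathsf{v})\rvert$ at the vertex carrying the positive strength), and then obtains a nonnegative solution of $-\Delta_{\mathcal{G},\alpha}u=\boldsymbol{1}$ by invoking that these Laplacians generate positive semigroups \cite{mug1} together with the abstract result \cite[Corollary~7.4]{batka}. You instead prove coercivity of $h_{\alpha}$ by a compactness/contradiction argument (compact embedding $H^1(\mathcal{G})\hookrightarrow L^2(\mathcal{G})$ plus connectedness forcing the limit to be a constant annihilated by the positive strength), apply Lax--Milgram, recover the $\delta$-vertex conditions by edgewise regularity and integration by parts, and then establish positivity by hand: nonnegativity by testing with $\upsilon_-$, and strict positivity by edgewise strict concavity combined with a Hopf-type argument at a minimizing vertex, where your sign bookkeeping is consistent with the paper's convention that $\frac{\partial\upsilon_{\mathsf{e}}}{\partial n}(\mathsf{v})$ points into $\mathsf{v}$. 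What each approach buys: the paper's estimate is quantitative (an explicit lower bound on $\lambda_1$) but delegates positivity to semigroup theory, which as cited only yields $u\ge 0$; your argument is self-contained and elementary, delivers the claimed \emph{strict} positivity explicitly, and makes transparent exactly where $\alpha_{\mathsf{v}}\ge 0$ enters (so that its failure foreshadows Section~\ref{seclin}), at the cost of a non-explicit coercivity constant.
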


\begin{definition}\label{torfunc}
Let $\mathcal{G}$ be a connected metric graph.
The \emph{torsion function} $\upsilon$ of $\mathcal{G}$ is the unique solution $\upsilon$ of equation~\eqref{torrob}.
The \emph{torsional rigidity} of $\mathcal{G}$ with vertex strengths $\alpha$ is 
\begin{equation*}
	T(\mathcal{G}, \alpha)=\int_{\mathcal{G}} \upsilon(x) \mathrm{d} x.
\end{equation*}
\end{definition}

\begin{remark}
	Implicitly, we assume in Definition~\ref{torfunc} that $\alpha$ is chosen in such a way that the torsion function $\upsilon$ exists.
	Proposition~\ref{postor} provides one way to ensure this, but its assumptions are not always necessary as we shall see in Section~\ref{seclin}.
\end{remark}

\begin{proof}[Proof of Proposition~\ref{postor}]
 We may assume that there is only one positive strength $\alpha_{\mathsf{v} }$ at the vertex $\mathsf{v} $.
 We will prove that the first eigenvalue $\lambda_{1}$ of $-\Delta_{\mathcal{G},\alpha}$ is positive by using its variational characterization:
	\begin{equation}\label{firsteigenvalue}
	\lambda_{1}
	=
	\inf_{u \in H^{1}(\mathcal{G})} \frac{h_{\alpha}(u)}{\lVert u \rVert_{L^{2}(\mathcal{G})}^{2}}=\inf_{\substack{u \in H^{1}(\mathcal{G}) \\  \lVert u \rVert_{L^{2}(\mathcal{G})}=1}} h_{\alpha}(u).
 	\end{equation} 
	Let $u \in H^{1}(\mathcal{G})$. 
	Choose $\kappa>0$ with $\alpha_{\mathsf{v} }>\kappa \lvert \mathcal{G} \rvert$ and  let $x_{0} \in \mathcal{G}$ such that $\lvert u(x_{0}) \rvert> 1/ \sqrt{\lvert \mathcal{G} \rvert}$. 
	Without loss of generality, we may assume $u(x_0) > 0$.
	We distinguish two cases: either $\lvert u(\mathsf{v} ) \rvert\geq \frac{\kappa}{\alpha_{\mathsf{v} }}$ or $\lvert u(\mathsf{v} ) \rvert<\frac{\kappa}{\alpha_{\mathsf{v} }}$. 
	In the former case we have $h_{\alpha}(u)>\kappa>0$. 
	In the latter, we observe by connectedness
 \begin{align*}
	u(x_{0})-u(\mathsf{v} )
	&=
	\int_{x_{0}}^{\mathsf{v} } u'(t)\mathrm{d} t
	\leq 
	\sqrt{ \lvert x_{0}-\mathsf{v} \rvert}\int_{x_{0}}^{\mathsf{v} } \lvert u'(t) \rvert^{2}\mathrm{d} t
	\\
	&\leq 
	\sqrt{\lvert x_{0}-\mathsf{v} \rvert}\int_{\mathcal{G}} \lvert u'(t) \rvert^{2}\mathrm{d} t.
 \end{align*}
 Since $\frac{1}{\sqrt{\lvert \mathcal{G} \rvert}}-\sqrt{\frac{\kappa}{\alpha_{\mathsf{v} }}}<u(x_{0})-u(\mathsf{v} )$, we have
 $$0< \frac{1}{\sqrt{\lvert x_{0}-\mathsf{v} \rvert}}\left( \frac{1}{\sqrt{\lvert \mathcal{G} \rvert}}-\sqrt{\frac{\kappa}{\alpha_{\mathsf{v} }}} \right)< \int_{\mathcal{G}} \lvert u'(t) \rvert^{2}\mathrm{d} t.$$
 We showed for every $u\in H^{1}(\mathcal{G})$
 \begin{equation*}
h_{\alpha}(u)> \min \left\lbrace \kappa,  \frac{1}{\sqrt{\lvert \mathcal{G} \rvert}}\left( \frac{1}{\sqrt{\lvert \mathcal{G} \rvert}}-\sqrt{\frac{\kappa}{\alpha_{\mathsf{v} }}} \right) \right\rbrace >0.
 \end{equation*}
 Laplace operators on metric graphs with $\delta$-vertex conditions are generators of positive semigroups \cite[Theorem 6.71]{mug1}. 
 Since the infimum of the spectrum of -$\Delta_{\mathcal{G},\alpha}$ is positive, this implies that there exists $u\geq 0$ with $-\Delta_{\mathcal{G},\alpha}u=\boldsymbol{1}$ \cite[Corollary 7.4]{batka}. 
\end{proof}
In Proposition~\ref{postor}, we proved that if all strengths are non-negative, with at least one of them being positive, then the torsion function uniquely exists and is positive. Let us show via the following examples that in general this condition cannot be readily dropped.
 
 The first example is a graph with $\delta$-vertex conditions where the sum of the strengths is zero due to the presence of negative strengths. We demonstrate that, in this case, the corresponding torsion function takes negative values.

\begin{example} 
{
\label{exa:2.6}
Let $\mathcal{G}$ be the metric graph given by an interval $[-1,1]$ with strengths $+1$ at both end points and strength $-2$ in the middle, cf. Figure~\ref{fig:2.6}.
Its torsion function is 
\[
\upsilon(x) = -\frac{x^{2}}{2} + \frac{1}{4} \lvert x \rvert - 1.
\] 
In particular, \(\upsilon(0) = -1 < 0\). 

This example can even be refined by replacing $\alpha_0$ by $-2 + \epsilon$. Then, even the sum of strengths will be positive but for sufficiently small $\epsilon > 0$, the value of $\upsilon(0)$ will remain negative.
}

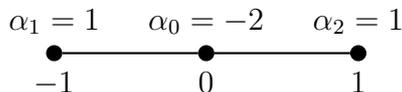
\begin{figure}[ht]
		\begin{tikzpicture}
			
			\node[draw, circle, inner sep=2pt, fill, label=above:{$\alpha_{2}= 1$}, label=below:{$1$}] (A) at (6, 2) {};
			\node[draw, circle, inner sep=2pt, fill, label=above:{$\alpha_{0}=-2$}, label=below:{$0$}] (B) at (4, 2) {};
			\node[draw, circle, inner sep=2pt, fill, label=above:{$\alpha_{1}=1$}, label=below:{$-1$}] (C) at (2, 2) {};
			
			\draw[thick] (A) -- (B);
			\draw[thick] (B) -- (C);
			
		\end{tikzpicture}
	\caption{The metric graph from Example~\ref{exa:2.6}. 
	The sum of strengths is zero but the torsion function is not positive definite.}
	\label{fig:2.6}
\end{figure}	

\end{example}

In the second example, we go further and show that even though the sum of the strengths is positive, the presence of negative strengths prevents us from guaranteeing the existence of a torsion function.

\begin{example}
	\label{exa:2.7}
{
Let $\mathcal{G}$ be the metric graph given by an interval $[-1,1]$ with strengths $+2$ at both end points and strength $-3$ in the middle, cf. Figure~\ref{fig:2.7}.

The sum of strenghts is $+1$ and thus comfortably positive but the infimum of the spectrum of the Laplacian $- \Delta_{\mathcal{G}, \alpha}$ is negative as can be seen by applying the quadratic form $h_\alpha$ to the test function $f(x) = 1 - \lvert x \rvert$.
This implies that the first eigenvalue must be negative, as per~\eqref{firsteigenvalue}. 
Therefore, we cannot be certain that there is a solution to the torsion equation~\eqref{torrob} as higher eigenvalues might happen to coincide with zero. Note that this example can be modified by replacing the strengths at the boundary vertices by any \(C > 0\).
}

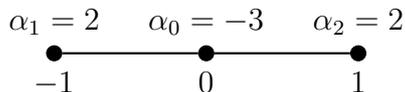
\begin{figure}[ht]
		\begin{tikzpicture}
			
			\node[draw, circle, inner sep=2pt, fill, label=above:{$\alpha_{2}= 2$}, label=below:{$1$}] (A) at (6, 2) {};
			\node[draw, circle, inner sep=2pt, fill, label=above:{$\alpha_{0}=-3$}, label=below:{$0$}] (B) at (4, 2) {};
			\node[draw, circle, inner sep=2pt, fill, label=above:{$\alpha_{1}=2$}, label=below:{$-1$}] (C) at (2, 2) {};
			
			\draw[thick] (A) -- (B);
			\draw[thick] (B) -- (C);
			
		\end{tikzpicture}
	\caption{The metric graph from Example~\ref{exa:2.7}. 
	The sum of strengths is positive but the infimum of the spectrum can be negative.}
	\label{fig:2.7}
\end{figure}

\end{example}

\section{Variational characterization}\label{secvarchar}
The following theorem, the main result of this section, is a variational cha\-racterization for the torsional rigidity.

\begin{theorem}\label{thm:varchar}
Let $\mathcal{G}$ be a connected metric graph and let $\alpha \in [0, \infty)^{\mathsf{V}}$ such that at least one $\alpha_{\mathsf{v}}$ is strictly positive.
Then
\begin{equation}\label{varchar}
	T(\mathcal{G}, \alpha)=\sup_{u\in H^{1}(\mathcal{G})}\frac{(\int_{\mathcal{G}}\lvert u \rvert \mathrm{d} x)^{2} }{h_{\alpha}(u)}=\max_{u\in H^{1}(\mathcal{G})}\frac{\lVert u \rVert^{2}_{L^{1}(\mathcal{G})} }{h_{\alpha}(u)}.
\end{equation}
Furthermore, $u \in H^1(\mathcal{G})$ maximizes~\eqref{varchar} if and only if it is a scalar multiple of the torsion function.
\end{theorem}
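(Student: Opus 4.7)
The plan is to reduce the characterisation to a Green-type identity combined with a Cauchy--Schwarz inequality for the bilinear form polarising $h_\alpha$.

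The key step is the identity
$$\int_{\mathcal{G}} u \, \mathrm{d} x = h_\alpha(u, \upsilon) \quad \text{for every } u \in H^1(\mathcal{G}),$$
where $h_\alpha(\cdot,\cdot)$ denotes the symmetric bilinear form polarising $h_\alpha$. To derive it, I would write $\int_\mathcal{G} u \, \mathrm{d}x = \int_\mathcal{G} u \cdot (-\Delta \upsilon) \, \mathrm{d}x$ and integrate by parts edge-by-edge. Summing the resulting boundary contributions, continuity of $u$ at each vertex $\mathsf{v}$ allows one to pull $u(\mathsf{v})$ out of the sum $\sum_{\mathsf{e}\in \mathsf{E}_\mathsf{v}} \partial \upsilon_\mathsf{e}/\partial n(\mathsf{v})$, which by the vertex condition on $\upsilon$ equals $-\alpha_\mathsf{v}\upsilon(\mathsf{v})$. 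Rearranging yields precisely $\int_\mathcal{G} u'\upsilon' \, \mathrm{d}x + \sum_\mathsf{v} \alpha_\mathsf{v} u(\mathsf{v})\upsilon(\mathsf{v}) = h_\alpha(u,\upsilon)$. Taking $u = \upsilon$ gives in particular $T(\mathcal{G}, \alpha) = h_\alpha(\upsilon)$.

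Proposition~\ref{postor} and its proof show $\lambda_1 > 0$, so $h_\alpha(\cdot,\cdot)$ is a positive definite inner product on $H^1(\mathcal{G})$. Applying the identity to $\lvert u \rvert \in H^1(\mathcal{G})$ and using the observation $h_\alpha(\lvert u \rvert) = h_\alpha(u)$ for real-valued $u$ with $\alpha \geq 0$ (which follows from $\lvert \lvert u\rvert' \rvert = \lvert u' \rvert$ a.e.\ and $\lvert u(\mathsf{v}) \rvert^2 = u(\mathsf{v})^2$), the Cauchy--Schwarz inequality yields
$$\left( \int_\mathcal{G} \lvert u \rvert \, \mathrm{d} x \right)^2 = h_\alpha(\lvert u \rvert, \upsilon)^2 \leq h_\alpha(\lvert u \rvert) \cdot h_\alpha(\upsilon) = h_\alpha(u) \cdot T(\mathcal{G}, \alpha),$$
which is the desired upper bound. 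Setting $u = \upsilon$ shows the bound is attained, turning the supremum into a maximum and giving the value $T(\mathcal{G}, \alpha)$. For the equality case, Cauchy--Schwarz forces $\lvert u \rvert$ to be a scalar multiple of $\upsilon$; since $\mathcal{G}$ is connected, $\upsilon > 0$, and $u$ is continuous, this entails $u = \pm c\, \upsilon$. Complex-valued $u$ can be treated by writing $u = \lvert u \rvert \mathrm{e}^{\mathrm{i}\theta}$ and noting that $h_\alpha(\lvert u \rvert) \leq h_\alpha(u)$ with equality only for constant phase.

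The main technical obstacle is the Green identity. One has to handle the boundary terms with care: the sign convention for inward-pointing derivatives must be applied consistently, and the recombination of contributions at each vertex into the weighted boundary term $\sum_{\mathsf{v}} \alpha_\mathsf{v} u(\mathsf{v})\upsilon(\mathsf{v})$ relies crucially on continuity of $u$ at vertices and on the $\delta$-conditions for $\upsilon$. Once this identity is established, the variational inequality and the equality analysis are standard consequences of the Cauchy--Schwarz inequality for the positive definite form $h_\alpha(\cdot,\cdot)$.
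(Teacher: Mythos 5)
Your proof is correct, but it takes a genuinely different route from the paper. You start from the existence and positivity of the torsion function $\upsilon$ (Proposition~\ref{postor}), establish the Green-type identity $\int_{\mathcal{G}} u\,\mathrm{d}x = h_{\alpha}(u,\upsilon)$ for all $u\in H^{1}(\mathcal{G})$ (and in particular $T(\mathcal{G},\alpha)=h_{\alpha}(\upsilon)$), and then conclude by Cauchy--Schwarz for the positive definite form $h_{\alpha}$, with the equality case of Cauchy--Schwarz (plus $h_{\alpha}(\lvert u\rvert)\leq h_{\alpha}(u)$, positivity of $\upsilon$, and connectedness) delivering the characterization of maximizers. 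The paper instead works with the concave functional $I(u)=\int_{\mathcal{G}}u\,\mathrm{d}x-\tfrac12 h_{\alpha}(u)$: it proves anti-coercivity via an Eulerian-cycle/edge-doubling estimate to get a unique maximizer, identifies that maximizer as the torsion function through the Euler--Lagrange equation (thereby re-deriving existence rather than quoting Proposition~\ref{postor}), and converts $\max I$ into the P\'olya quotient via the elementary inequality $At-\tfrac{B}{2}t^{2}\leq \tfrac{A^{2}}{2B}$; your Green identity appears there only as the final step identifying $\max I$ with $T(\mathcal{G},\alpha)$. Your argument is shorter and isolates exactly what is needed (existence of a positive torsion function and $\lambda_{1}>0$ so that $h_{\alpha}$ is an inner product), which matches the paper's remark that only existence and positivity of $\upsilon$ are really used; the paper's approach buys, in exchange, a self-contained variational construction of $\upsilon$ that does not presuppose Proposition~\ref{postor}. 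The two points you should keep explicit in a write-up are the ones you already flag: the sign convention for the inward normal derivatives when assembling the vertex terms, and the diamagnetic-type step $\lVert\,\lvert u\rvert'\,\rVert_{L^{2}}\leq\lVert u'\rVert_{L^{2}}$ with its equality analysis for complex-valued $u$.
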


	Indeed, we shall see that the condition that all $\alpha_{\mathsf{v}}$ are positive and at least one strictly positive can be relaxed. 
	In the proof of Theorem~\ref{thm:varchar}, we only use have existence and positivity of the torsion function.

The quotient 
\[
	\frac{\lVert u \rVert^{2}_{L^{1}(\mathcal{G})} }{h_{\alpha}(u)}
\]
is called \emph{P\'olya quotient}, in analogy to the well-known \emph{Rayleigh quotient}
\[
	\frac{h_{\alpha}(u)}{\lVert u \rVert^{2}_{L^{2}(\mathcal{G})} }
\]
from the variational characterization of eigenvalues. Note the $L^{1}$-norm in the P\'olya quotient whereas the Rayleigh quotient has an $L^{2}$-norm.

\begin{proof}[Proof of Theorem~\ref{thm:varchar}]
Let $I:H^{1}(\mathcal{G})\rightarrow \mathbb{R}$ be defined as    
\begin{equation}\label{func}
	I(u)=\int_{\mathcal{G}}u\mathrm{d} x-\frac{1}{2}\left( \int_{\mathcal{G}}\lvert u' \rvert^{2}\mathrm{d} x +\sum_{\mathsf{v}\in \mathsf{V}}\alpha_{\mathsf{v}}\lvert u(\mathsf{v}) \rvert^{2}\right).
\end{equation}
First, note that $I:H^{1}(\mathcal{G})\rightarrow \mathbb{R}$ is strictly concave and continuous. We will prove that $I$ has a unique maximizer by showing that it is anti-coercive, i.e. $I(u) \rightarrow -\infty $ as $\lVert u \rVert_{H^{1}(\mathcal{G})}\rightarrow \infty$. 

Let $\tilde{\mathcal{G}}$ be the metric graph obtained by doubling each edge of $\mathcal{G}$ and $\Delta_{\tilde{\mathcal{G}},_{2\alpha}}$ be the Laplacian on $\tilde{\mathcal{G}}$ with $\delta$-vertex conditions twice of strengths of $\Delta_{\mathcal{G},_{\alpha}}$. For $u \in H^{1}(\mathcal{G})$, let \(\tilde{u}\) be the function on $\tilde{\mathcal{G}}$ obtained by copying each edge function of \(u\). Then $\tilde{u} \in H^{1}(\tilde{\mathcal{G}})$ and
\begin{equation*}
\int_{\mathcal{G}} u^{2}(x)\mathrm{d} x=\frac{1}{2}\int_{\tilde{\mathcal{G}}}\tilde{u}^{2}(x)\mathrm{d} x, \quad \int_{\mathcal{G}} u'(x)^{2}\mathrm{d} x=\frac{1}{2}\int_{\tilde{\mathcal{G}}}\tilde{u}'(x)^{2}\mathrm{d} x.
\end{equation*}
As we can cover the entirety of $\tilde{\mathcal{G}}$ by a single loop of length $2\lvert \mathcal{G} \rvert$, we have
\begin{equation*}
	\lVert u \rVert_{L^{2}(\mathcal{G})}=\frac{1}{2}\int_{0}^{2\lvert \mathcal{G} \rvert}\tilde{u}^{2}(x)\mathrm{d} x,
\end{equation*}
see~\cite{karr} for more detailed information about Eulerian cycle technique.
Let $\mathsf{v}_{0}$ be the vertex of $\mathcal{G}$ whose strength $\alpha_{0}$ is positive. We can choose the loop covering $\mathcal{G}$ starting at $\mathsf{v}_{0}$. Then
\begin{align*}
	\int_{0}^{2\lvert \mathcal{G} \rvert}\tilde{u}^{2}(x)\mathrm{d} x=&\int_{0}^{2\lvert \mathcal{G} \rvert} \left( \int_{\mathsf{v}_{0}}^{x}\tilde{u}'(t)\mathrm{d} t+\tilde{u}(\mathsf{v}_{0}) \right)^{2}\mathrm{d} x\\
	\leq& 2  \int_{0}^{2\lvert \mathcal{G} \rvert} \left( \int_{\mathsf{v}_{0}}^{x}\tilde{u}'(t)\mathrm{d} t \right)^{2}+\tilde{u}^{2}(\mathsf{v}_{0})\mathrm{d} x\\
	\leq& 4\lvert \mathcal{G} \rvert  \int_{0}^{2\lvert \mathcal{G} \rvert}  \int_{0}^{2\lvert \mathcal{G} \rvert} \tilde{u}'(t)^{2}\mathrm{d} t\mathrm{d} x+2\lvert \mathcal{G} \rvert\tilde{u}^{2}(\mathsf{v}_{0})\\
	=&8\lvert \mathcal{G} \rvert \lVert \tilde{u}'\rVert^{2}_{L^{2}(\tilde{\mathcal{G}})}+2\lvert \mathcal{G} \rvert\tilde{u}^{2}(\mathsf{v}_{0}),
\end{align*}
where in the third step we used Jensen's inequality. This implies 
\begin{equation*}
	\lVert u \rVert_{H^{1}(\mathcal{G})}^{2}\leq (8\lvert \mathcal{G} \rvert+1) \lVert u'\rVert_{L^{2}(\mathcal{G})}+\lvert \mathcal{G} \rvert u^{2}(\mathsf{v}_{0}).
\end{equation*}
Setting $C=\max\left\lbrace 8\lvert \mathcal{G} \rvert+1,\frac{\lvert \mathcal{G} \rvert}{\alpha_{0}}\right\rbrace $, we find
\begin{align*}
	\lVert u \rVert_{H^{1}(\mathcal{G})}^{2}\leq C \left( \lVert u' \rVert_{L^{2}(\mathcal{G})}+\alpha_{0}u^{2}(\mathsf{v}_{0})\right)
	\leq C h_{\alpha}(u)
\end{align*}
which gives 
\begin{equation*}
I(u)\leq \lvert \mathcal{G} \rvert \lVert u \rVert_{H^{1}(\mathcal{G})}- \frac{1}{2C} \lVert u \rVert_{H^{1}(\mathcal{G})}^{2}.
\end{equation*}
This proves anti-coercivity of $I$ and thus existence of a unique maximizer.

 Let $\upsilon \in H^{1}(\mathcal{G})$ be this unique maximizer. 
 We next prove that $\upsilon$ is the unique weak solution of  
\begin{equation*}
	-\Delta_{\mathcal{G}, \alpha} u =1.
\end{equation*}
 Fix any $u\in H^{1}(\mathcal{G})$ and write $i(\tau)=I(\upsilon+\tau u)$, $\tau \in \mathbb{R}$. Since $\upsilon+\tau u \in H^{1}(\mathcal{G})$ for each $\tau$, the scalar function has maximum at 0, thus $\frac{di}{d\tau}(0)=0$, provided this derivative exists. But 
\begin{equation*}
	0=i'(0)=\int_{\mathcal{G}} (u-\upsilon'u')\mathrm{d} x-\sum_{\mathsf{v} \in \mathsf{V}}\alpha_{\mathsf{v}}\upsilon(\mathsf{v})u(\mathsf{v}),
\end{equation*}
which means that $\upsilon$ is a weak solution of the equation $-\upsilon''=1$ with $\delta$-vertex conditions, i.e. the torsion function.

Next, note that for $A,B>0$,
	\begin{equation}\label{auxlem}
		At-B\frac{t^2}{2}\leq \frac{1}{2} \frac{A^2}{B}
	\end{equation}
	with  equality if and only if $t= \frac{A}{B}$, cf.~\cite[Lemma 2.1]{bras}.
In the following argument, we apply~\eqref{auxlem} for a fixed $0 \leq u \in H^{1}(\mathcal{G})$. Note that $I(u)\leq I(\lvert u \rvert)$, whence it suffices to consider positive functions in $H^{1}(\mathcal{G})$.
\begin{align*}
	I(u)
	&=
	\int_{\mathcal{G}}u \mathrm{d} x-\frac{1}{2}\left( \int_{\mathcal{G}}\lvert u' \rvert^{2}\mathrm{d} x+\sum_{\mathsf{v} \in \mathsf{V}}\alpha_{\mathsf{v}} u(\mathsf{v})^{2}\right) \\
	&\leq 
	\max_{\lambda  \in \mathbb{R}}\left[ \lambda \int_{\mathcal{G}}u\mathrm{d} x-\frac{\lambda^{2}}{2}\left( \int_{\mathcal{G}}\lvert u' \rvert^{2}\mathrm{d} x+\sum_{\mathsf{v} \in \mathsf{V}}\alpha_{\mathsf{v}} u(\mathsf{v})^{2}\right) \right] 
	\\
	&=
	\max_{\lambda  \in \mathbb{R}} I(\lambda u)=\frac{1}{2}\frac{(\int_{\mathcal{G}}u\mathrm{d} x)^{2} }{\int_{\mathcal{G}}\lvert u' \rvert^{2}\mathrm{d} x+\sum_{\mathsf{v} \in \mathsf{V}}\alpha_{\mathsf{v}} u(\mathsf{v})^{2}}\label{supeq},
\end{align*}
with equality if and only if 
	\[
		\lambda
		=
		\lambda_0
		:=
		\frac
		{(\int_{\mathcal{G}}u\mathrm{d} x) }
		{\int_{\mathcal{G}}\lvert u' \rvert^{2}\mathrm{d} x+\sum_{\mathsf{v} \in \mathsf{V}}\alpha_{\mathsf{v}} u(\mathsf{v})^{2}}.
	\] 
In particular, $\lambda_0^{-1} \upsilon$, where $\upsilon$ is the torsion function, i.e. the maximizer of $I$, is a maximizer of the P{\'o}lya quotient.
Since the P{\'o}lya quotient is invariant under multiplying the argument $u$ by non-zero scalars, any multiple of $\upsilon$ maximizes the P{\'o}lya quotient.
Conversely, let $u$ maximize the P{\'o}lya quotient.
By the above arguments, a scalar multiple of $u$ must be a maximizer of $I$, so $u$ must be a scalar multiple of the torsion function $\upsilon$.

We have seen that the functional $I$ is uniquely maximized by the torsion function $\upsilon$, and that the maximal value of $I$ coincides with the maximum of the P\'olya quotient.
It remains to be seen that the maximal value of $I$ is indeed the torsional rigidity.
This is verified by direct calculation, using $- v'' \equiv 1$ in $\mathcal{G}$, integration by parts, and the $\delta$-vertex conditions.
\end{proof}

\section{Including Dirichlet vertex conditions}
\label{sec:Dirichlet}

Previously, the torsional rigidity on metric graphs had been defined in the presence of vertices with Dirichlet conditions~\cite{pluem}.
Indeed, we can combine our setting of $\delta$-vertex with the presence of Dirichlet vertices.
For this, let $\mathsf{V}_D \subset \mathsf{V}$.
We define the Sobolev space of functions vanishing at $\mathsf{V}_D$
\begin{equation*}
	H^1_0(\mathcal{G}, \mathsf{V}_D)
	=
	\left\lbrace  u \in H^1(\mathcal{G})
	\
	\mid
	\
	u(\mathsf{v}) = 0 \text{ for all } \mathsf{v} \in \mathsf{V}_D
	\right\rbrace.
\end{equation*}

We then analogously define for $\beta \in [0, \infty)^{\mathsf{V} \backslash \mathsf{V}_D}$ the quadratic form $h_{\beta} = h_{\mathcal{G}, \beta, \mathsf{V}_D}$ with domain $H^1_0(\mathcal{G}, \mathsf{V}_D)$ as
\begin{equation*}
	h_{\beta}(u)
	=
	\int_{\mathcal{G}}\lvert u'(x) \rvert^{2}\mathrm{d} x+\sum_{\mathsf{v}  \in \mathsf{V} \backslash \mathsf{V}_D} \beta_{\mathsf{v} } \lvert u(\mathsf{v} ) \rvert^{2},
\end{equation*}
which again gives rise to a self-adjoint operator $\Delta_{\mathcal{G}, \beta, \mathsf{V}_D}$ in $L^2(\mathcal{G})$, the Laplacian with Dirichlet boundary conditions at $\mathsf{V}_D$ and $\delta$-vertex conditions at the other vertices.
Its domain consists of functions which are edgewise in the $H^2$ Sobolev space and satisfy the following vertex conditions:
\begin{equation}
\label{vertexcond}
\begin{cases}
	\text{$u$ is continuous } \text{ at every $\mathsf{v} \in \mathsf{V}$},
	\\
	u(\mathsf{v}) = 0 \text{ for all $\mathsf{v} \in \mathsf{V}_D$},
	\\
	\sum_{\mathsf{e} \in \mathsf{E}_{\mathsf{v} }}\frac{\partial u_{\mathsf{e}} }{\partial n}(\mathsf{v} )+\beta_{\mathsf{v} } u(\mathsf{v} )=0
	\text{ for all $\mathsf{v}  \in \mathsf{V} \backslash \mathsf{V}_D$}.
\end{cases}
\end{equation}
\begin{remark}[On connectedness]
	In the presence of vertices with Dirichlet boundary conditions, there is a slight issue with connectedness.
	Whenever we say that $\mathcal{G}$ with Dirichlet vertex set $\mathsf{V}_D \subset \mathsf{V}$ is \emph{connected} we mean that for all $x,y \in \mathcal{G}$ there is a path in $\mathcal{G} \backslash \mathsf{V}_D$ joining $x$ and $y$.
	This way, we ensure that Dirichlet vertices will not split $\mathcal{G}$ into spectrally independent components.   
\end{remark}

By positivity of the spectrum and the fact that $\Delta_{\mathcal{G}, \beta, \mathsf{V}_D}$ in $L^2(\mathcal{G})$ generates a positive semigroup, one also has the analogue of Proposition~\ref{postor}, cf.~\cite{pluem}.
\begin{proposition}\label{postor_dir}
Let $\mathcal{G}$ be a connected metric graph, $\emptyset \neq \mathsf{V}_D \subset \mathsf{V}$, and $\beta \in [0, \infty)^{ \mathsf{V} \backslash \mathsf{V}_D }$.
Then the equation
\begin{eqnarray}\label{torrob_dir}
	-\Delta_{\mathcal{G}, \beta, \mathsf{V}_D} u =1
\end{eqnarray}
has a unique strictly positive solution in $H^1(\mathcal{G})$.
\end{proposition}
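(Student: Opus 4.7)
My plan is to follow the exact blueprint of the proof of Proposition~\ref{postor}, with the Dirichlet vertex playing the role that the positive $\delta$-strength vertex played there. The two key ingredients will again be (i) strict positivity of the first eigenvalue $\lambda_1$ of $-\Delta_{\mathcal{G}, \beta, \mathsf{V}_D}$, obtained through the variational characterization, and (ii) the fact that this operator generates a positive semigroup, from which existence of a positive solution follows via \cite[Corollary 7.4]{batka} exactly as cited in the proof of Proposition~\ref{postor}.

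For step (i), I would establish a Poincar\'e-type inequality on $H^1_0(\mathcal{G}, \mathsf{V}_D)$. Fix some Dirichlet vertex $\mathsf{v}_0 \in \mathsf{V}_D$, which exists since $\mathsf{V}_D \neq \emptyset$. For any $u \in H^1_0(\mathcal{G}, \mathsf{V}_D)$ we have $u(\mathsf{v}_0) = 0$, so for every $x \in \mathcal{G}$ the connectedness assumption (in the sense of the preceding remark, giving paths avoiding Dirichlet vertices except possibly at endpoints) yields a path from $\mathsf{v}_0$ to $x$, and the fundamental theorem of calculus combined with Cauchy--Schwarz gives
\begin{equation*}
	\lvert u(x) \rvert^2
	\leq
	\operatorname{dist}_{\mathcal{G}}(\mathsf{v}_0, x) \int_{\mathcal{G}} \lvert u'(t) \rvert^2 \, \mathrm{d} t.
\end{equation*}
Integrating over $\mathcal{G}$ and using $\operatorname{dist}_{\mathcal{G}}(\mathsf{v}_0, x) \leq \lvert \mathcal{G} \rvert$ produces $\lVert u \rVert_{L^2(\mathcal{G})}^2 \leq \lvert \mathcal{G} \rvert^2 \lVert u' \rVert_{L^2(\mathcal{G})}^2$. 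Since $\beta \geq 0$, the boundary term in $h_\beta$ is non-negative, so $h_\beta(u) \geq \lVert u' \rVert_{L^2(\mathcal{G})}^2 \geq \lvert \mathcal{G} \rvert^{-2} \lVert u \rVert_{L^2(\mathcal{G})}^2$, and the variational formula~\eqref{firsteigenvalue} (adapted to $H^1_0(\mathcal{G}, \mathsf{V}_D)$) yields $\lambda_1 \geq \lvert \mathcal{G} \rvert^{-2} > 0$.

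With $\lambda_1 > 0$, the operator $-\Delta_{\mathcal{G}, \beta, \mathsf{V}_D}$ is self-adjoint and boundedly invertible on $L^2(\mathcal{G})$, so \eqref{torrob_dir} admits a unique solution $u \in \operatorname{dom}(\Delta_{\mathcal{G}, \beta, \mathsf{V}_D}) \subset H^1_0(\mathcal{G}, \mathsf{V}_D)$. For strict positivity, I would invoke, exactly as in Proposition~\ref{postor}, that Laplacians on metric graphs with mixed Dirichlet and $\delta$-vertex conditions generate positive (in fact irreducible) semigroups, see \cite[Theorem 6.71]{mug1}, so that by \cite[Corollary 7.4]{batka} the unique solution $u$ of $-\Delta_{\mathcal{G}, \beta, \mathsf{V}_D} u = \mathbf{1}$ is non-negative, and strict positivity on $\mathcal{G} \setminus \mathsf{V}_D$ follows from irreducibility (or a straightforward maximum-principle / unique-continuation argument on each edge, since $-u'' = 1$ forbids interior zeros).

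The only subtlety compared to Proposition~\ref{postor} is the dichotomy argument there distinguishing whether $u(\mathsf{v})$ is small or large; here it is unnecessary because the Dirichlet condition pins $u$ to zero at $\mathsf{v}_0$ without allowing a tradeoff with the $L^2$-norm, so the Poincar\'e inequality is cleaner. The main conceptual point one must be careful about is the notion of connectedness in the presence of Dirichlet vertices, which is precisely the content of the remark preceding the proposition and ensures that $\mathsf{v}_0$ is reachable from every $x \in \mathcal{G}$ within the graph; otherwise the argument splits naturally over connected components.
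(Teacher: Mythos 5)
Your proposal is correct and follows essentially the same route as the paper, which simply invokes positivity of the spectrum together with the positive-semigroup property of $\Delta_{\mathcal{G},\beta,\mathsf{V}_D}$ (citing \cite{pluem}); you merely fill in the standard Poincar\'e-inequality detail that the paper leaves implicit. Your added remark that strict positivity can only hold on $\mathcal{G}\setminus\mathsf{V}_D$ (the solution vanishes at Dirichlet vertices) is a correct and slightly more careful reading of the statement than the paper itself gives.
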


\begin{definition}\label{torfunc_dir}
Let $\mathcal{G}$ be a connected metric graph, $\emptyset \neq \mathsf{V}_D \subset \mathsf{V}$, and $\beta \in [0, \infty)^{ \mathsf{V} \backslash \mathsf{V}_D}$.
The \emph{torsion function} $\upsilon$ of $\mathcal{G}$ is the unique solution $\upsilon$ of equation~\eqref{torrob_dir}.
The \emph{torsional rigidity} of $\mathcal{G}$ with Dirichlet conditions at $\mathsf{V}_D$ and strength $\beta$ is
\begin{equation}
	T(\mathcal{G}, \beta, \mathsf{V}_D)=\int_{\mathcal{G}} \upsilon(x) \mathrm{d} x.
\end{equation}
\end{definition}

\begin{theorem}\label{lemmavarchar_dir}
Let $\mathcal{G}$ be a connected metric graph, let $\emptyset \neq \mathsf{V}_D \subset \mathsf{V}$, and let $\beta \in [0, \infty)^{ \mathsf{V} \backslash \mathsf{V}_D}$.
Then
\begin{equation}\label{varchar_dir}
	T(\mathcal{G}, \beta, \mathsf{V}_D)
	=
	\sup_{u\in H^{1}_0(\mathcal{G}, \mathsf{V}_D)}
	\frac{(\int_{\mathcal{G}}\lvert u \rvert \mathrm{d} x)^{2} }{h_{\beta}(u)}
	=
	\max_{u\in H^{1}_0(\mathcal{G}, \mathsf{V}_D)}
	\frac{\lVert u \rVert^{2}_{L^{1}(\mathcal{G})} }{h_{\beta}(u)},
\end{equation}
if all strengths are non-negative and at least one of them is positive.
The torsion function is the unique maximizer in~\eqref{varchar}.
\end{theorem}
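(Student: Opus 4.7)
The plan is to follow the blueprint of the proof of Theorem~\ref{thm:varchar} almost verbatim, working on the restricted form domain $H^1_0(\mathcal{G}, \mathsf{V}_D)$ rather than on $H^1(\mathcal{G})$. Define the functional $J \colon H^1_0(\mathcal{G}, \mathsf{V}_D) \to \mathbb{R}$ by
\[
  J(u) = \int_{\mathcal{G}} u \, \mathrm{d} x - \tfrac{1}{2} h_\beta(u).
\]
Since $h_\beta$ is a non-negative quadratic form and the linear part is continuous, $J$ is strictly concave and continuous. The heart of the matter will again be anti-coercivity: once this is established, a unique maximizer $\upsilon \in H^1_0(\mathcal{G}, \mathsf{V}_D)$ exists.

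The anti-coercivity step is actually easier than in Theorem~\ref{thm:varchar} because the presence of at least one Dirichlet vertex $\mathsf{v}_D$ provides a Poincaré inequality directly, removing the need for the Eulerian doubling trick. Fixing any $x \in \mathcal{G}$ and a path from $\mathsf{v}_D$ to $x$ in the connected graph $\mathcal{G}$, Cauchy--Schwarz yields
\[
  \lvert u(x) \rvert
  = \lvert u(x) - u(\mathsf{v}_D) \rvert
  \leq \sqrt{\lvert \mathcal{G} \rvert}\, \lVert u' \rVert_{L^2(\mathcal{G})}.
\]
Integrating gives $\lVert u \rVert_{L^2(\mathcal{G})}^2 \leq \lvert \mathcal{G} \rvert^2 \lVert u' \rVert_{L^2(\mathcal{G})}^2$, whence there is a constant $C = C(\lvert \mathcal{G} \rvert)$ with $\lVert u \rVert_{H^1(\mathcal{G})}^2 \leq C \lVert u' \rVert_{L^2(\mathcal{G})}^2 \leq C h_\beta(u)$, using $\beta \geq 0$. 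Combined with $\int_{\mathcal{G}} u \, \mathrm{d} x \leq \lvert \mathcal{G} \rvert^{1/2} \lVert u \rVert_{L^2(\mathcal{G})} \leq \lvert \mathcal{G} \rvert \lVert u \rVert_{H^1(\mathcal{G})}$, this gives
\[
  J(u) \leq \lvert \mathcal{G} \rvert \lVert u \rVert_{H^1(\mathcal{G})} - \tfrac{1}{2C} \lVert u \rVert_{H^1(\mathcal{G})}^2,
\]
which tends to $-\infty$ as $\lVert u \rVert_{H^1(\mathcal{G})} \to \infty$.

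Next, computing $\tfrac{\mathrm{d}}{\mathrm{d}\tau} J(\upsilon + \tau u)\rvert_{\tau=0} = 0$ for all admissible test functions $u \in H^1_0(\mathcal{G}, \mathsf{V}_D)$ shows that the maximizer $\upsilon$ is a weak solution of $-\Delta_{\mathcal{G}, \beta, \mathsf{V}_D} \upsilon = 1$, hence coincides with the torsion function by Proposition~\ref{postor_dir}. Applying the auxiliary inequality~\eqref{auxlem} with $t = \lambda$, $A = \int_{\mathcal{G}} \lvert u \rvert \, \mathrm{d} x$, $B = h_\beta(u)$ to $\lvert u \rvert$ exactly as in the proof of Theorem~\ref{thm:varchar} yields
\[
  J(u) \leq J(\lvert u \rvert) \leq \max_{\lambda \in \mathbb{R}} J(\lambda \lvert u \rvert)
  = \tfrac{1}{2} \frac{\lVert u \rVert_{L^1(\mathcal{G})}^2}{h_\beta(u)},
\]
with equality precisely when $u$ is a scalar multiple of $\upsilon$. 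Finally, a direct integration by parts, using $-\upsilon'' = 1$, together with the mixed Dirichlet/$\delta$ conditions~\eqref{vertexcond} (the Dirichlet boundary terms vanish because $\upsilon(\mathsf{v}) = 0$ there, and the $\delta$-condition boundary terms cancel the form contribution at the remaining vertices), gives $h_\beta(\upsilon) = \int_{\mathcal{G}} \upsilon \, \mathrm{d} x = T(\mathcal{G}, \beta, \mathsf{V}_D)$, and consequently $\max J = J(\upsilon) = T/2$, matching the right-hand side of~\eqref{varchar_dir}.

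The only subtlety worth flagging is the convention that $\mathcal{G}$ is connected in the sense that Dirichlet vertices do not disconnect it; this is exactly what is needed to run the Poincaré argument above, and the remark preceding the statement takes care of it. No truly new obstacle arises relative to Theorem~\ref{thm:varchar}.
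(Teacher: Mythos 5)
Your proposal is correct and follows essentially the same route as the paper, whose proof of this statement simply declares it ``completely analogous'' to Theorem~\ref{thm:varchar}: the same strictly concave functional, anti-coercivity, Euler--Lagrange computation identifying the maximizer with the torsion function, and the scaling inequality~\eqref{auxlem} relating the maximum of the functional to the P\'olya quotient. The only deviation is that you replace the Eulerian-doubling argument by a direct Poincar\'e inequality through a Dirichlet vertex, a legitimate (indeed cleaner) shortcut since $\mathsf{V}_D \neq \emptyset$; the only nitpick is that the bound $\int_{\mathcal{G}} u \, \mathrm{d} x \leq \lvert \mathcal{G} \rvert \lVert u \rVert_{H^1(\mathcal{G})}$ should carry the constant $\lvert \mathcal{G} \rvert^{1/2}$, which is immaterial for anti-coercivity.
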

The proof of Theorem~\ref{lemmavarchar_dir} is completely analogous to the one of~\ref{thm:varchar}.
Intuitively, when the strengths at a set of vertices $\mathsf{W} \subset \mathsf{V}$ are sent to $\infty$, then Dirichlet conditions at $\mathsf{W}$ should emerge.
The next theorem makes this intuition precise on the level of torsional rigidity.

\begin{theorem}\label{conasp}
	Let $\mathcal{G}$ be a connected metric graph.
	Let $\mathsf{V}_D \subset \mathsf{V}$, and $\mathsf{W} \subset \mathsf{V} \backslash \mathsf{V}_D$.
	Let further $\beta_1 \in [0, \infty)^{ \mathsf{V} \backslash \{ W \cup \mathsf{V}_D \}}$, $\beta_2 \in (0, \infty)^{W}$.
	Then,
	\[
	\lim_{t \to \infty}
	T(\mathcal{G}, (\beta_1, t \beta_2), \mathsf{V}_D)
	=
	T(\mathcal{G}, \beta_1, \mathsf{V}_D \cup W).
	\]
	Furthermore, the respective torsion functions converge in $L^1(\mathcal{G})$.
\end{theorem}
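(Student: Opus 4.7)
The plan is to prove the statement by a two-sided squeeze on $T(\mathcal{G}, (\beta_1, t\beta_2), \mathsf{V}_D)$, combined with a compactness-and-identification argument for the torsion functions themselves.

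First I would establish the easy inequality $\geq$ via the variational characterization of Theorem~\ref{lemmavarchar_dir}. Every $u \in H^1_0(\mathcal{G}, \mathsf{V}_D \cup W)$ vanishes on $W$, so the divergent contribution $t (\beta_2)_\mathsf{v} \lvert u(\mathsf{v})\rvert^2$ disappears and $h_{(\beta_1, t\beta_2)}(u) = h_{\beta_1}(u)$ for all $t$. Since $H^1_0(\mathcal{G}, \mathsf{V}_D \cup W) \subset H^1_0(\mathcal{G}, \mathsf{V}_D)$, taking the sup of the P\'olya quotient first on the smaller space yields
\[
T(\mathcal{G}, (\beta_1, t\beta_2), \mathsf{V}_D) \geq T(\mathcal{G}, \beta_1, \mathsf{V}_D \cup W) \qquad \text{for every } t > 0.
\]
Moreover, $t \mapsto h_{(\beta_1, t\beta_2)}(u)$ is non-decreasing for fixed $u$, so $t \mapsto T(\mathcal{G}, (\beta_1, t\beta_2), \mathsf{V}_D)$ is non-increasing in $t$ and the limit along $t\to\infty$ exists.

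Next I would derive uniform bounds on the torsion functions $\upsilon_t$ associated with the left-hand side. Testing the weak equation against $\upsilon_t$ itself gives the energy identity
\[
h_{(\beta_1, t\beta_2)}(\upsilon_t) \;=\; \int_{\mathcal{G}} \upsilon_t\,\mathrm{d} x \;=\; T(\mathcal{G}, (\beta_1, t\beta_2), \mathsf{V}_D),
\]
which is uniformly bounded by the monotonicity just established. This controls $\|\upsilon_t'\|_{L^2(\mathcal{G})}$ uniformly and, more importantly, shows that for each $\mathsf{v} \in W$
\[
\lvert \upsilon_t(\mathsf{v})\rvert^2 \;\leq\; \frac{C}{t (\beta_2)_\mathsf{v}} \;\xrightarrow[t\to\infty]{}\; 0.
\]
Combined with the uniform $L^1$-bound $\|\upsilon_t\|_{L^1} = T_t$ and the one-dimensional interpolation $\|u\|_{L^\infty(\mathcal{G})} \leq \lvert\mathcal{G}\rvert^{-1}\|u\|_{L^1} + \lvert\mathcal{G}\rvert^{1/2}\|u'\|_{L^2}$ (applied along a path joining the mean-value point to any $x$), this gives a uniform $H^1(\mathcal{G})$-bound.

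Then I would exploit compactness. On a compact metric graph the embedding $H^1(\mathcal{G}) \hookrightarrow C(\mathcal{G})$ is compact, so from any sequence $t_k \to \infty$ I can extract a subsequence along which $\upsilon_{t_k} \rightharpoonup \upsilon_\ast$ weakly in $H^1(\mathcal{G})$ and uniformly on $\mathcal{G}$. The pointwise vertex estimate forces $\upsilon_\ast(\mathsf{v}) = 0$ for every $\mathsf{v} \in \mathsf{V}_D \cup W$, so $\upsilon_\ast \in H^1_0(\mathcal{G}, \mathsf{V}_D \cup W)$. For any test $\phi \in H^1_0(\mathcal{G}, \mathsf{V}_D \cup W)$ the $t(\beta_2)_\mathsf{v}$-terms drop out because $\phi(\mathsf{v}) = 0$ on $W$, so passing to the limit in
\[
\int_{\mathcal{G}} \upsilon_{t_k}'\, \phi'\,\mathrm{d} x + \sum_{\mathsf{v} \in \mathsf{V} \setminus (\mathsf{V}_D \cup W)} (\beta_1)_\mathsf{v}\, \upsilon_{t_k}(\mathsf{v})\, \phi(\mathsf{v}) \;=\; \int_{\mathcal{G}} \phi\,\mathrm{d} x
\]
identifies $\upsilon_\ast$ as the unique weak solution of the torsion equation for $(\mathcal{G}, \beta_1, \mathsf{V}_D \cup W)$, i.e.\ the torsion function of the limiting problem. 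Uniqueness upgrades subsequential to full convergence, and uniform convergence then yields both $L^1$-convergence of the torsion functions and $T_t \to T(\mathcal{G}, \beta_1, \mathsf{V}_D \cup W)$, matching the lower bound.

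The main technical point I expect to wrestle with is the third step: ensuring that the test-function class is taken to be exactly $H^1_0(\mathcal{G}, \mathsf{V}_D \cup W)$ -- big enough, via the uniqueness part of Proposition~\ref{postor_dir}/Theorem~\ref{lemmavarchar_dir}, to pin down $\upsilon_\ast$, yet small enough so that the $t(\beta_2)_\mathsf{v}$-contributions really disappear in the limit rather than producing unwanted boundary fluxes.
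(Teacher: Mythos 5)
Your proposal is correct, but it takes a genuinely different route from the paper. The paper's proof is a one-liner: it invokes the known norm resolvent convergence of the Laplacian with $\delta$-couplings to the Laplacian with Dirichlet conditions as the strengths tend to $\infty$ (citing Berkolaiko--Kuchment), from which both the convergence of the torsion functions $\upsilon_t = (-\Delta)^{-1}\boldsymbol{1}$ and of their integrals follow at once. You instead give a self-contained variational argument: monotonicity of $t \mapsto T(\mathcal{G},(\beta_1,t\beta_2),\mathsf{V}_D)$ and the lower bound via the P\'olya quotient restricted to $H^1_0(\mathcal{G},\mathsf{V}_D\cup W)$, the energy identity $h_{(\beta_1,t\beta_2)}(\upsilon_t)=\int_{\mathcal{G}}\upsilon_t$ to get uniform $H^1$-bounds and the decay $\lvert\upsilon_t(\mathsf{v})\rvert^2\le C/(t(\beta_2)_{\mathsf{v}})$ at $\mathsf{v}\in W$, then compactness of $H^1(\mathcal{G})\hookrightarrow C(\mathcal{G})$, identification of every subsequential limit as the unique (Lax--Milgram) solution of the Dirichlet-at-$\mathsf{V}_D\cup W$ torsion problem, and the standard subsequence trick. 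All steps check out (the test-function class you worry about is exactly the form domain of the limit operator, so uniqueness does pin down $\upsilon_\ast$; positivity of the relevant strengths or nonemptiness of the Dirichlet set makes the variational characterizations applicable for every $t$ and in the limit). What each approach buys: the paper's argument is shorter and delegates all analysis to an established operator-convergence theorem; yours is longer but elementary, stays entirely within the tools developed in the paper, and in fact delivers slightly more than stated, namely monotone convergence of the torsional rigidities from above and uniform (not just $L^1$) convergence of the torsion functions.
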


Note that $\mathsf{V}_D$ is allowed to be the empty set in Theorem~\ref{conasp}.

\begin{proof}
	This is a direct consequence of norm resolvent convergence of Laplacians with $\delta$-vertex conditions to the Laplacian with Dirichlet conditions as the strengths tend to $\infty$, cf.~\cite[Theorem~3.5]{berk5}.
\end{proof}

\section{Reduction to a weighted discrete Laplacian}
\label{seclin}

In this section, we further investigate under which conditions the torsion function exists as a positive function.
Therefore, we drop the condition on strengths given in Proposition~\ref{postor}. 

\begin{remark}
	One could further relax our requirements and only ask that a solution $\upsilon$ to the torsion problem~\eqref{torrob} exists.
	Indeed, this would be equivalent to $0$ not being in the spectrum of $\Delta_{\mathcal{G}, \alpha}$.
	However, sign-changing torsion functions would probably undermine the concept of torsional rigidity (their integral) as a landscape function or a mechanical quantity since the latter could now become negative.
	We therefore believe that existence of a torsion function as a positive function is a reasonable minimal requirement.
\end{remark}

The purpose of this section is twofold.
First, we establish a connection between existence of the torsion function as a positive function and positivity of an associated \emph{weighted discrete Laplacian} where the weights depend on the topological and metric structure of the metric graph $\mathcal{G}$, and on the weights $\alpha$.
This illustrates that while non-negativity of all strengths as in Proposition~\ref{postor} might not be necessary to ensure existence of a positive torsion function, it is in general a hard problem, depending on positivity of a weighted discrete Laplacian.

Secondly, we use the translation to a discrete Laplace operator to further investigate the question of positivity of the torsion function  in our previous examples.
The computation of the torsion function on a metric graph can be reduced to an algebraic system of $\lvert \mathsf{V} \rvert$ many equations, cf. \cite[Proposition 3.1]{pluem}.

\begin{proposition} 
	\label{syslin}
	Let $\mathcal{G}$ be a connected metric graph with strengths $\alpha \in \mathbb{R}^{\mathsf{V}}$, and assume that its torsion function $\upsilon$ exists.
	Then, the restriction $g:=\upsilon \mid_{\mathsf{V}}\colon \mathsf{V} \to \mathbb{C}$ is the unique solution of the linear system
	\begin{equation}\label{system}
		\frac{2}{d^{l}_{\mathsf{v}}}\left( \sum_{\mathsf{e}=[\mathsf{v},\mathsf{w}]\in\mathsf{E}_{\mathsf{v}} }\left(\frac{g(\mathsf{v})-g(\mathsf{w})}{\ell_{\mathsf{e}}} \right)+\alpha_{\mathsf{v}}g(\mathsf{v})\right) =1
	\end{equation}
	where $d_{\mathsf{v}}^{\ell}=\sum_{\mathsf{e} \in\mathsf{E}_{\mathsf{v}} } \ell_\mathsf{e}$. 
	Conversely, if~\eqref{system} has a unique solution  $(g(\mathsf{v}))_{\mathsf{v}\in \mathsf{V} }$, then there exists a unique torsion function $\upsilon \in H^1(\mathcal{G})$ such that $g(\mathsf{v})=\upsilon(\mathsf{v})$ for all $\mathsf{v} \in \mathsf{V}$.
\end{proposition}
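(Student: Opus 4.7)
The plan is to exploit the fact that, because $-\upsilon''_{\mathsf{e}} = 1$ on every edge, the torsion function is edgewise a quadratic polynomial with leading coefficient $-1/2$, and is therefore completely determined by its two endpoint values. Parametrising each edge $\mathsf{e} = [\mathsf{v}, \mathsf{w}]$ as $[0, \ell_{\mathsf{e}}]$ with $\mathsf{v}$ at $0$, I would write
\[
	\upsilon_{\mathsf{e}}(x) = -\frac{x^{2}}{2} + B_{\mathsf{e}}\, x + g(\mathsf{v}),
\]
determine $B_{\mathsf{e}}$ from the condition $\upsilon_{\mathsf{e}}(\ell_{\mathsf{e}}) = g(\mathsf{w})$, and read off the inward normal derivative
\[
	\frac{\partial \upsilon_{\mathsf{e}}}{\partial n}(\mathsf{v}) = -\upsilon_{\mathsf{e}}'(0) = \frac{g(\mathsf{v}) - g(\mathsf{w})}{\ell_{\mathsf{e}}} - \frac{\ell_{\mathsf{e}}}{2}.
\]

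For the forward implication, I would then sum this identity over $\mathsf{e} \in \mathsf{E}_{\mathsf{v}}$, noting that the $-\ell_{\mathsf{e}}/2$ terms aggregate to $-d_{\mathsf{v}}^{\ell}/2$, and substitute the result into the $\delta$-vertex condition from~\eqref{vertexcond_Dir}. A direct rearrangement followed by division by $d_{\mathsf{v}}^{\ell}/2$ yields exactly~\eqref{system}. Uniqueness of $g$ as a solution of~\eqref{system} then follows from the postulated uniqueness of $\upsilon$.

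For the converse, given a unique solution $g$ of~\eqref{system}, I would \emph{define} $\upsilon$ edgewise by the parabolic formula above with $g(\mathsf{v})$ and $g(\mathsf{w})$ as boundary values. Continuity at each vertex is automatic, $\upsilon$ is edgewise in $H^{2}(0, \ell_{\mathsf{e}})$ satisfying $-\upsilon_{\mathsf{e}}'' = 1$, and running the previous calculation backwards turns~\eqref{system} into the $\delta$-vertex condition~\eqref{vertexcond_Dir}, placing $\upsilon$ in the domain of $\Delta_{\mathcal{G}, \alpha}$. Uniqueness of such a $\upsilon$ with prescribed trace $g$ is then immediate, because its restriction to each edge is pinned down by the two endpoint values.

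I do not anticipate a genuine obstacle: once the quadratic profile on each edge is fixed, everything reduces to algebra. The only points requiring care are the sign convention for the derivative \emph{pointing into} $\mathsf{v}$, and the observation that a loop at $\mathsf{v}$ contributes $0$ to the difference sum $g(\mathsf{v}) - g(\mathsf{w})$ but $2\ell_{\mathsf{e}}$ to $d_{\mathsf{v}}^{\ell}$; both are absorbed automatically by the parametrisation.
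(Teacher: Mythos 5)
Your proposal is correct and follows essentially the same route as the paper: edgewise the torsion function is the explicit parabola determined by its endpoint values, the inward derivatives summed at each vertex turn the $\delta$-vertex condition into the linear system~\eqref{system}, and conversely a solution of~\eqref{system} reconstructs the edgewise parabolas, with uniqueness in both directions coming from this one-to-one correspondence between $\upsilon$ and its vertex values.
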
 
	
\begin{proof}
		Since $-\upsilon_{\mathsf{e}}''=1$ on each edge, we get for all $x_{\mathsf{e}}\in [0,\ell_{\mathsf{e}}]$
		\begin{equation*}
			\upsilon_{\mathsf{e}}(x_{\mathsf{e}})=\frac{(\ell_{\mathsf{e}}-x_{\mathsf{e}})x_{\mathsf{e}}}{2}+\frac{\upsilon_{\mathsf{e}}(\ell_{\mathsf{e}})-\upsilon_{\mathsf{e}}(0)}{\ell_{\mathsf{e}}}x_{\mathsf{e}}+\upsilon_{\mathsf{e}}(0).
		\end{equation*}
		In particular,
		\begin{equation*}
			\upsilon^{'}_{e}(x_{\mathsf{e}})=\frac{\ell_{\mathsf{e}}-2x_{\mathsf{e}}}{2}+\frac{\upsilon_{\mathsf{e}}(\ell_{\mathsf{e}})-\upsilon_{\mathsf{e}}(0)}{\ell_{\mathsf{e}}},
		\end{equation*}
		for all $x_{\mathsf{e}}\in [0,\ell_{\mathsf{e}}]$. 		
		Implementing $\delta$-vertex conditions leads to
		\begin{eqnarray*}
			-\alpha_{\mathsf{v}}g(v)&=&\sum_{\mathsf{e}=[\mathsf{v},\mathsf{w}]\in\mathsf{E}_{\mathsf{v}} }-\frac{\ell_\mathsf{e}}{2}+\frac{g(\mathsf{v})-g(\mathsf{w})}{\ell_\mathsf{e}}\\
			&=&-\frac{d^{l}_{\mathsf{v}}}{2}+\sum_{\mathsf{e}=[\mathsf{v},\mathsf{w}]\in\mathsf{E}_{\mathsf{v}} }\frac{g(\mathsf{v})-g(\mathsf{w})}{\ell_\mathsf{e}},
		\end{eqnarray*}
		which yields~\eqref{system}.
		Conversely, for vertex values $(g(\mathsf{v}))_{\mathsf{v} \in \mathsf{V}}$, there is a unique function $\upsilon$ satisfying $-\upsilon_{\mathsf{e}}''=1$ on each $\mathsf{e} \in \mathsf{E}$, given by
		\begin{equation*}
			\upsilon_{\mathsf{e}}(x_{\mathsf{e}})=\frac{(\mathsf{w}-x_{\mathsf{e}})(x_{\mathsf{e}}-\mathsf{v})}{2}+\frac{g(\mathsf{w})-g(\mathsf{v})}{\mathsf{w}-\mathsf{v}}(x_{\mathsf{e}}-\mathsf{v})+g(\mathsf{v})
		\end{equation*}
		It satisfies $\delta$-vertex conditions at every vertex due to~\eqref{system}.
	Uniqueness follows from the one-to-one correspondence of $\upsilon$ and $g$.    
	\end{proof}

	The torsion function $\upsilon$ is strictly concave on each edge. Hence, positivity at all vertices is equivalent to positivity of $\upsilon$ itself. 
By Proposition~\ref{syslin}, positivity of the torsion function at vertices is equivalent to existence of a positive solution of the linear system~\eqref{system} of $\lvert \mathsf{V}\rvert$ many equations. 
	Let $L_{\mathcal{G}, \alpha}$ denote the matrix form of that linear system. 
	Note that the operator $L_{\mathcal{G}, \alpha}$ has the form of a \emph{weighted discrete Laplacian} with (not necessarily sign-definite) potential, cf.~\cite{KellerLW-21}. 
	Its entries are real and non-positive off-diagonal, hence, $(\mathrm{e}^{- L_{\mathcal{G}, \alpha} t})_{t \geq 0}$ generates a positive semigroup, cf.~\cite[Theorem~7.1]{batka}.
In particular, there exists $x\geq 0$ with $L_{\mathcal{G}, \alpha} x=\boldsymbol{1}$ if and only if the spectrum of $L_{\mathcal{G}, \alpha}$ is non-negative or -- alternatively -- its characteristic polynomial has no root in $(-\infty,0]$, see~\cite[Corollary 7.4]{batka}.
Let us summarize these findings:

\begin{theorem}
	\label{thm:torsion_discrete_Laplacian}
Let	$\mathcal{G}$ be a connected metric graph with strengths $\alpha \in \mathbb{R}^{\mathsf{V}}$.
	The torsion function exists as a strictly positive function if and only if the infimum of the spectrum of the \emph{weighted discrete Laplacian} $L_{\mathcal{G}, \alpha}$ on $\ell^2(\mathsf{V})$, given in~\eqref{system} is strictly positive.
\end{theorem}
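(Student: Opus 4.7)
The plan is to string together three observations, two of which are essentially already made in the text immediately preceding the theorem.

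First, by Proposition~\ref{syslin}, the torsion function exists (and is then uniquely determined by its vertex values) if and only if the linear system $L_{\mathcal{G}, \alpha} g = \boldsymbol{1}$ admits a (unique) solution. Moreover, since $-\upsilon_{\mathsf{e}}'' \equiv 1$, each edge restriction $\upsilon_{\mathsf{e}}$ is strictly concave on $[0, \ell_{\mathsf{e}}]$ and therefore attains its minimum at one of the two endpoints. Consequently, strict positivity of $\upsilon$ on $\mathcal{G}$ is equivalent to strict positivity of $g = \upsilon|_{\mathsf{V}}$.

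Second, a \emph{non-negative} solution $g$ of $L_{\mathcal{G}, \alpha} g = \boldsymbol{1}$ is automatically \emph{strictly positive}. Indeed, if $g(\mathsf{v}) = 0$ for some $\mathsf{v} \in \mathsf{V}$, then~\eqref{system} at $\mathsf{v}$ reduces to
\[
	-\frac{2}{d^{\ell}_{\mathsf{v}}} \sum_{\mathsf{e} = [\mathsf{v}, \mathsf{w}] \in \mathsf{E}_{\mathsf{v}}} \frac{g(\mathsf{w})}{\ell_{\mathsf{e}}} = 1,
\]
whose left-hand side is non-positive, a contradiction. Hence it suffices to characterise non-negative solvability of $L_{\mathcal{G}, \alpha} g = \boldsymbol{1}$ in spectral terms.

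This last step is exactly the positive-semigroup argument recalled just above the theorem statement: $L_{\mathcal{G}, \alpha}$ has non-positive off-diagonal entries, so $-L_{\mathcal{G}, \alpha}$ generates a positive semigroup on $\ell^{2}(\mathsf{V})$. By~\cite[Corollary~7.4]{batka} a non-negative $g$ with $L_{\mathcal{G}, \alpha} g = \boldsymbol{1}$ exists if and only if the characteristic polynomial of $L_{\mathcal{G}, \alpha}$ has no root in $(-\infty, 0]$; since $\ell^{2}(\mathsf{V})$ is finite-dimensional this is equivalent to $\inf \sigma(L_{\mathcal{G}, \alpha}) > 0$. Chaining the three steps yields the claimed equivalence.

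None of these steps is technically deep; all three appeal to material already available in the paper. The only point that deserves care is that~\cite[Corollary~7.4]{batka} be read as a genuine equivalence, so that failure of spectral positivity really rules out a non-negative solution, and that it is the strict bound $\inf \sigma(L_{\mathcal{G}, \alpha}) > 0$, rather than mere invertibility $0 \notin \sigma(L_{\mathcal{G}, \alpha})$, which is matched by the existence of the non-negative, hence strictly positive, torsion function.
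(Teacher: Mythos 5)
Your proposal is correct and follows essentially the same route as the paper, which proves the theorem by the discussion directly preceding it: edgewise strict concavity reduces positivity of $\upsilon$ to positivity of its vertex values, Proposition~\ref{syslin} converts this to the system $L_{\mathcal{G},\alpha}g=\boldsymbol{1}$, and the non-positive off-diagonal entries of $L_{\mathcal{G},\alpha}$ allow the positive-semigroup characterization of \cite[Corollary~7.4]{batka} to be applied as an equivalence. Your additional check that a non-negative solution of~\eqref{system} is automatically strictly positive is a small but worthwhile detail that the paper leaves implicit.
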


Before proceeding, let us discuss some examples.

\begin{example}[Interval graph]
	\label{exa:interval_graph}
	Consider an interval graph $\mathcal{J}$ of length $a > 0$ with strengths $\alpha_0, \alpha_1 \in \mathbb{R}$ at the end points, cf. Figure~\ref{fig:examples}.
	Identifying $\mathcal{J}$ with $[0,a]$, the torsion problem~\eqref{torrob} becomes
	\[
	\begin{cases}
		- \upsilon''(x) =1, &x \in (0,a),\\
		-\upsilon'(0)+\alpha_{0} \upsilon(0)=0, \\
		\upsilon'(a)+\alpha_{1} \upsilon(a)=0.
	\end{cases}
	\]
	The corresponding discrete Laplacian $L_{\mathcal{J}, \{ \alpha_0, \alpha_1\}}$ is then
	\[
	L_{\mathcal{J}, \{ \alpha_0, \alpha_1\}}
	=
	\begin{pmatrix}
		\frac{2}{a} \left( \frac{1}{a} + \alpha_0 \right)
		&
		- \frac{2}{a^2}\\
		- \frac{2}{a^2}
		&
		\frac{2}{a} \left( \frac{1}{a} + \alpha_1 \right)	
	\end{pmatrix}
	\]
	with characteristic polynomial
	\[
	\chi_{L_{\mathcal{J}, \{ \alpha_0, \alpha_1\}}}(x)=x^2-x\left(\frac{2(\alpha_{0}+\alpha_{1})}{a}+\frac{4}{a^2}\right)+ \frac{4}{a^2}\left(\alpha_{0}\alpha_{1}+ \frac{(\alpha_{0}+\alpha_{1})}{a}\right) .
	\]

	A solution (not necessarily sign-definite) to the torsion problem exists if and only if both roots are not zero.
	The torsion exists as \emph{positive function} if and only if both roots are non-negative, that is $\alpha_{0}+\alpha_{1} > \max \{-\frac{2}{a},-a\alpha_{0}\alpha_{1}\}$.
	In particular, this allows for configurations where one strength is negative, but the torsion function positive e.g. $\alpha_0 = -1$, $\alpha_1 = 2$, and $a = 1/4$.
	
	We finally note for later use that in this situation, the torsion function can be explicitly calculated.
	In particular, in the special case where $\alpha_0 > 0$ and $\alpha_1 = 0$, we find
	\[
	T(\mathcal{J}, \{\alpha_0, 0\})
	=
	\frac{\lvert \mathcal{J} \rvert^{3}}{3}+\frac{\lvert \mathcal{J} \rvert^{2}}{\alpha_0}.
	\]
\end{example}

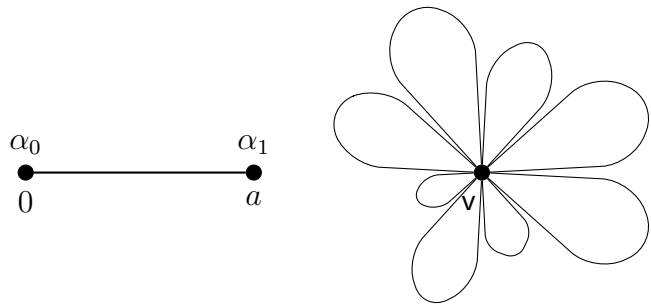
\begin{figure}
	\begin{tikzpicture}
		\begin{scope}
			\node[draw, circle, inner sep=2pt, fill, label=above:{$\alpha_0$}, label=below:{$0$}] (A) at (0, 0) {};
			\node[draw, circle, inner sep=2pt, fill,  label=above:{$\alpha_1$}, label=below:{$a$}] (B) at (3, 0) {};
			\draw[thick] (A) -- (B);
		\end{scope}

		\begin{scope}[xshift = 6cm]
			 \node[draw, circle, inner sep=2pt, fill] (A) at (0, 0) {};
			 \draw (-.17,-.4) node {$\mathsf{v}$};
			 \begin{scope}[rotate = 0]
			 	\draw[rounded corners = 12pt] (A) -- (3:2) -- (22.5:2.5) -- (42:2) -- (A);
			 \end{scope} 
			 \begin{scope}[rotate = 45]
			 	\draw[rounded corners = 10pt] (A) -- (3:1.5) -- (22.5:2) -- (42:1.5) -- (A);
			 \end{scope} 
			 \begin{scope}[rotate = 90]
			 	\draw[rounded corners = 12pt] (A) -- (3:2) -- (22.5:2.5) -- (42:2) -- (A);
			 \end{scope} 
			 \begin{scope}[rotate = 135]
			 	\draw[rounded corners = 12pt] (A) -- (3:1.75) -- (22.5:2.25) -- (42:1.75) -- (A);
			 \end{scope} 			 
			 \begin{scope}[rotate = 180]
			 	\draw[rounded corners = 5pt] (A) -- (3:.75) -- (22.5:1) -- (42:.75) -- (A);
			 \end{scope} 
			 \begin{scope}[rotate = 225]
			 	\draw[rounded corners = 10pt] (A) -- (3:1.5) -- (22.5:2) -- (42:1.5) -- (A);
			 \end{scope} 
			 \begin{scope}[rotate = 270]
			 	\draw[rounded corners = 5pt] (A) -- (3:1) -- (22.5:1.25) -- (42:1) -- (A);
			 \end{scope} 			 
			 \begin{scope}[rotate = 315]
			 	\draw[rounded corners = 12pt] (A) -- (3:2) -- (22.5:2.5) -- (42:2) -- (A);
			 \end{scope}

		\end{scope}
		
	\end{tikzpicture}
	
		\caption{The interval graph $\mathcal{J}$ from Example~\ref{exa:interval_graph} (left) and the flower graph $\mathcal{F}$ from Example~\ref{exa:flower} (right).}
	\label{fig:examples}
		
\end{figure}

\begin{example}[Flower graph]\label{exa:flower}
 	Let $\mathcal{F}$ be the flower graph with edge set $\mathsf{E}$ with a $\delta$-vertex condition of strength $\alpha$ at the only vertex $\mathsf{v}$, cf. Figure~\ref{fig:examples}. 
	The discrete Laplacian $L_{\mathcal{F}, \alpha}$ has the only entry
	\[
	L_{\mathcal{F}, \alpha}
	=
	\frac{\alpha}{\lvert \mathcal{F} \rvert}
	\]
	A solution to the torsion problem~\eqref{torrob} exists if and only if $\alpha \neq 0$, and the torsion function is positive if and only if $\alpha>0$.

	Let us also calculate the torsional rigidity for later use in the case $\alpha > 0$. 
	On each edge $\mathsf{e}$ the torsion function is
	\[
	\upsilon_{\mathsf{e}}(x_{\mathsf{e}})
	=
	\frac{-x_{\mathsf{e}}^{2}}{2}
	+
	\frac{\ell_{\mathsf{e}}}{2}x_{\mathsf{e}}+\frac{\lvert \mathcal{F} \rvert}{\alpha}
	\]
	which leads to the torsional rigidity
	\[
	T(\mathcal{F}, \alpha)=\sum_{\mathsf{e} \in \mathsf{E}}\frac{\ell_{\mathsf{e}}^{3}}{12}+\frac{\lvert \mathcal{F} \rvert^{2}}{\alpha}.
	\]
In particular, if $\mathcal{F}$ is an equilateral flower graph, then
\begin{equation*}
T(\mathcal{F}, \alpha)=\frac{\lvert \mathcal{F} \rvert^{3}}{12\lvert \mathsf{E} \rvert^{2}}+\frac{\lvert \mathcal{F} \rvert^{2}}{\alpha}.
\end{equation*}
\end{example}

\section{Hadamard-type formulas}
	\label{sec:hadamard}

In this section we study the behavior of the torsion function under perturbations of edge lengths and strengths.
Note that throughout this section we do not require on positivity of the torsion function but merely the existence of a solution of the torsion problem~\eqref{torrob}.

Fixing one edge $\mathsf{e}_{0}\in E$, for $s>0$, consider the perturbed graph $\mathcal{G}_{s}$ with the same topology as $\mathcal{G}$ and whose edge lengths $(\ell_{s,\mathsf{e}})_{\mathsf{e} \in \mathsf{E}}$ are
  $$\ell_{s,\mathsf{e}}=\begin{cases}
	s, &\mathsf{e}=\mathsf{e}_{0},\\
	\ell_{\mathsf{e}}, &\mathsf{e}\neq \mathsf{e}_{0}.
\end{cases}$$
 We obtain the original graph $\mathcal{G}$ for $s=\ell_{\mathsf{e}_{0}}$. 
 We denote the torsion function on $\mathcal{G}_{s}$ by $\upsilon_{s}={(\upsilon_{s,\mathsf{e}})}_{\mathsf{e}\in \mathsf{E}}$, implicitly dropping the dependence on the strengths $\alpha$.

\begin{theorem}\label{changeoflength}
	Let $\mathcal{G}$ be a connected metric graph and let $\alpha \in \mathbb{R}^{\mathsf{V}}$ such that the torsion exists.
	Then there exists a neighborhood $U\subset(0,\infty)$ of $\ell_{\mathsf{e}_{0}}$ such that the map $U \ni s \mapsto T(\mathcal{G}_{s}, \alpha)$ is continuously differentiable and its derivative at $s=\ell_{\mathsf{e}_{0}}$ is
	\begin{equation}\label{derivativeexpression}
		\frac{d}{ds}\bigg \vert_{s=\ell_{\mathsf{e}_{0}}}T(\mathcal{G}_{s}, \alpha)=2 \upsilon_{s,\mathsf{e}_{0}}(x_0)+\upsilon_{s,\mathsf{e}_{0}}'(x_0)^{2},
	\end{equation}
		where $x_0$ is any element of $\mathsf{e}_{0}$. In particular, the right-hand-side in~\eqref{derivativeexpression} does not depend on the particular choice of $x_0\in \mathsf{e}_{0}$. 
		Furthermore if  $\alpha \in [0,\infty)^{\mathsf{V}}$ and not identically zero, then $U=(0,\infty)$.
\end{theorem}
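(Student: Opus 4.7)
The plan is to establish $C^1$-smoothness on a neighborhood via the linear system of Proposition~\ref{syslin}, and then compute the derivative by pulling back to a fixed Hilbert space where an envelope-type identity applies. Since $\alpha$ may take negative values here, the variational characterization of Theorem~\ref{thm:varchar} is not directly available; however, $\upsilon_s$ remains a critical point of the underlying functional as soon as it exists, which is all that is needed.

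For the differentiability, I would use that, by Proposition~\ref{syslin}, the vertex values $g_s := \upsilon_s|_{\mathsf{V}}$ solve a linear system $L_{\mathcal{G}_s,\alpha}\, g_s = b_s$ whose entries depend analytically on $s>0$. By assumption $L_{\mathcal{G}_s,\alpha}$ is invertible at $s=\ell_{\mathsf{e}_0}$, hence on some open neighborhood $U$, and Cramer's rule gives $s\mapsto g_s \in C^\infty(U)$. The edgewise polynomial representation of $\upsilon_s$ from the same proposition then shows that $T(\mathcal{G}_s,\alpha)$ is $C^\infty$ on $U$. For the derivative formula, I would pull back to a fixed Hilbert space via the isomorphism $\Phi_s : H^1(\mathcal{G}) \to H^1(\mathcal{G}_s)$ defined as the identity on each edge $\mathsf{e}\neq\mathsf{e}_0$ and as $(\Phi_s u)_{\mathsf{e}_0}(x) := u_{\mathsf{e}_0}(\ell_{\mathsf{e}_0}\, x/s)$ on $\mathsf{e}_0$. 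Crucially, $\Phi_s$ preserves vertex values, hence continuity and the $\delta$-vertex conditions. Setting
\[
	I_s(v) := \int_{\mathcal{G}_s} v\,\mathrm{d}x - \tfrac{1}{2} h_{\alpha,s}(v), \qquad \tilde I_s := I_s \circ \Phi_s,
\]
an integration by parts using $-\upsilon_s''=1$ and the $\delta$-conditions yields $I_s(\upsilon_s)=T(\mathcal{G}_s,\alpha)/2$, and $\upsilon_s$ is the unique critical point of $I_s$ in $H^1(\mathcal{G}_s)$. Writing $u_s := \Phi_s^{-1}\upsilon_s$, which is smooth in $s$ by the previous step and remains a critical point of $\tilde I_s$, the critical-point identity kills the $\dot u_s$-variation and gives
\[
	\tfrac{1}{2}\,\tfrac{d}{ds}T(\mathcal{G}_s,\alpha) \;=\; \tfrac{d}{ds}\tilde I_s(u_s) \;=\; \tfrac{\partial \tilde I_s}{\partial s}(u_s).
\]

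An elementary change of variables on $\mathsf{e}_0$ shows that the only $s$-dependent terms in $\tilde I_s(u)$ are $(s/\ell_{\mathsf{e}_0})\int_{\mathsf{e}_0}u$ and $-(\ell_{\mathsf{e}_0}/(2s))\int_{\mathsf{e}_0}(u')^2$. Differentiating in $s$ and evaluating at $s=\ell_{\mathsf{e}_0}$ with $u_s=\upsilon$, I would conclude
\[
	\tfrac{d}{ds}\Big|_{s=\ell_{\mathsf{e}_0}} T(\mathcal{G}_s,\alpha) \;=\; \frac{2}{\ell_{\mathsf{e}_0}}\int_{\mathsf{e}_0}\upsilon\,\mathrm{d}x + \frac{1}{\ell_{\mathsf{e}_0}}\int_{\mathsf{e}_0}(\upsilon')^2\,\mathrm{d}x,
\]
which is the average over $\mathsf{e}_0$ of $2\upsilon+(\upsilon')^2$. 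Since $\tfrac{d}{dx}\bigl(2\upsilon+(\upsilon')^2\bigr) = 2\upsilon'(1+\upsilon'') = 0$ on $\mathsf{e}_0$ by $-\upsilon''=1$, this function is constant along $\mathsf{e}_0$, so its average equals its value at any $x_0\in\mathsf{e}_0$, yielding~\eqref{derivativeexpression} together with the $x_0$-independence. For the final claim, under $\alpha \in [0,\infty)^{\mathsf{V}}$ with $\alpha\not\equiv 0$, Proposition~\ref{postor} provides existence of the (positive) torsion function for arbitrary edge lengths, so the differentiability argument applies at every $s>0$, giving $U=(0,\infty)$.

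The main delicate point I expect is the bookkeeping of the pull-back: verifying that $\Phi_s$ is indeed a Hilbert-space isomorphism respecting the $\delta$-conditions, so that $\tilde I_s$ is genuinely a smooth functional on a fixed domain, and that the envelope identity applies with $\upsilon_s$ only being a critical point rather than a maximizer. Once this setup is in place, the derivative computation becomes a routine change of variables that bypasses any need to differentiate $\upsilon_s$ explicitly in $s$.
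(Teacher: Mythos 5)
Your argument is correct, and while the first step (smooth dependence via the discrete system of Proposition~\ref{syslin}; Cramer's rule is just a hands-on substitute for the implicit function theorem invoked in the paper) coincides with the paper's, your computation of the derivative takes a genuinely different route. The paper differentiates the two representations $T(\mathcal{G}_s,\alpha)=\int_{\mathcal{G}_s}\upsilon_s\,\mathrm{d}x=h_{\alpha}(\upsilon_s)$ directly in $s$, using the Leibniz rule at the moving endpoint, edgewise integration by parts, and the $\delta$-conditions to cancel the terms involving $\tfrac{d}{ds}\upsilon_s$; this is quick but requires differentiating the torsion function itself and some careful bookkeeping of total versus partial derivatives at the endpoint of the stretched edge. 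You instead transplant everything to the fixed space $H^{1}(\mathcal{G})$ by rescaling $\mathsf{e}_0$ and apply an envelope (Feynman--Hellmann-type) identity at the critical point of $I_s$ -- correctly noting that only criticality of $\upsilon_s$, not the maximizing property of Theorem~\ref{thm:varchar}, is available for sign-changing $\alpha$, and that this is all that is needed. This bypasses any explicit differentiation of $\upsilon_s$ beyond knowing $s\mapsto u_s$ is $C^1$, produces the derivative first as the edge average $\tfrac{1}{\ell_{\mathsf{e}_0}}\int_{\mathsf{e}_0}\bigl(2\upsilon+(\upsilon')^{2}\bigr)\mathrm{d}x$, and then the constancy of $2\upsilon+(\upsilon')^{2}$ along $\mathsf{e}_0$ (which the paper also uses, but only at the very end) delivers both \eqref{derivativeexpression} and the $x_0$-independence in one stroke; the formula checks out against, e.g., the interval of Example~\ref{exa:interval_graph}. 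The final claim $U=(0,\infty)$ via Proposition~\ref{postor} is handled as in the paper. In short: same smoothness step, but a cleaner, coordinate-free derivative computation on your side versus a more direct calculation in the paper.
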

\begin{proof}
	The existence of a neighbourhood $U$ in which the torsion problem has a solution and differentiability follow from the reduction to a discrete Laplacian in Theorem~\ref{thm:torsion_discrete_Laplacian} and the implicit function theorem, using that all entries of $L_{\mathcal{G}, \alpha}$ depend smoothly on edge lengths.
	Using that the torsional rigidity $\upsilon$ satisfies $1 \equiv - \upsilon_s''$, integration by parts on each edge, and the $\delta$-vertex conditions, we have 
	\begin{equation*}
		T(\mathcal{G}_{s}, \alpha)=\int_{\mathcal{G}_{s}}\upsilon_{s}(x)\mathrm{d} x
		= 
		\int_{\mathcal{G}_{s}}\upsilon_{s}'(x)^{2}\mathrm{d} x+\sum_{\mathsf{v} \in \mathsf{V}}\alpha_{\mathsf{v}}\upsilon_{s}(\mathsf{v})^{2}.
	\end{equation*}
	We differentiate three terms in the previous identity.
	First we have for any $x_0 \in \mathsf{e}_0$
	\begin{equation*}\label{dertor0}
		\frac{d}{ds}\int_{\mathcal{G}_{s}}\upsilon_{s}(x)\mathrm{d} x
		=
		\upsilon_{s,\mathsf{e}_{0}}(s)+\int_{\mathcal{G}_{s}}\frac{d}{ds}\upsilon_{s}(x)\mathrm{d} x.
	\end{equation*}
	Using again integration by parts, $1 \equiv - \upsilon_s'' $, and the $\delta$-vertex conditions
	\begin{align*}
		&\frac{d}{ds}\int_{\mathcal{G}_{s}}\upsilon_{s}'(x)^{2}
		\mathrm{d} x
		=
		\upsilon_{s,\mathsf{e}_{0}}'(s)^{2}
		+
		2\sum_{\mathsf{e} \in \mathsf{E}}\int_{0}^{\ell_{\mathsf{e}}}\upsilon_{s,\mathsf{e}}'(x_{\mathsf{e}})\frac{d}{ds}\upsilon_{s,\mathsf{e}}'(x_{\mathsf{e}})\mathrm{d} x_{\mathsf{e}}\\
		&=
		\upsilon_{s,\mathsf{e}_{0}}'(s)^{2}
		-
		2\sum_{\mathsf{e}\in \mathsf{E}}\int_{0}^{\ell_{\mathsf{e}}}\upsilon_{s,\mathsf{e}}''(x_{\mathsf{e}})\frac{d}{ds}\upsilon_{s,\mathsf{e}}(x_{\mathsf{e}})\mathrm{d} x_{\mathsf{e}}
		\\
		& \qquad
		 + 2 \sum_{\mathsf{e} \in \mathsf{E}}
		 	\left[
			\upsilon_{s,\mathsf{e}}'(\ell_{s,\mathsf{e}})\frac{d}{ds}\upsilon_{s,\mathsf{e}}(\ell_{s,\mathsf{e}})-\upsilon_{s,\mathsf{e}}'(0)\frac{d}{ds}\upsilon_{s,\mathsf{e}}(0)
			\right]
		\\
		&=
		\upsilon_{s,\mathsf{e}_{0}}'(s)^{2}
		+
		2\int_{\mathcal{G}_{s}}\frac{d}{ds}\upsilon_{s}(x)\mathrm{d} x
		+ 2 \sum_{\mathsf{v} \in \mathsf{V}}\frac{d}{ds}\upsilon_{s}(\mathsf{v})\sum_{\mathsf{e} \in\mathsf{E}_{\mathsf{v}} }\frac{\partial \upsilon_{s,\mathsf{e}}(\mathsf{v})}{\partial n}
		\\
		&=
		\upsilon_{s,\mathsf{e}_{0}}'(s)^{2}
		+
		2\int_{\mathcal{G}_{s}}\frac{d}{ds}\upsilon_{s}(x)\mathrm{d} x
		- 
		2 \sum_{\mathsf{v}\in \mathsf{V}}\alpha_{\mathsf{v}}\upsilon_{s}(\mathsf{v})\frac{d}{ds}\upsilon_{s}(\mathsf{v}).
	\end{align*}
Finally, for the third term, we have
\begin{equation*}
\frac{d}{ds}\left( \sum_{\mathsf{v} \in \mathsf{V}}\alpha_{\mathsf{v}}\upsilon_{s}(\mathsf{v})^{2}\right)= 2\sum_{\mathsf{v} \in \mathsf{V}} \alpha_{\mathsf{v}}\upsilon_{s}(\mathsf{v}) \frac{d}{ds}\upsilon_{s}(\mathsf{v}).
\end{equation*}
Appropriate summation of the three expressions will cancel the summations and the integrals of $\mathcal{G}_s$, hence
\begin{align*}
	\frac{d}{ds}T(\mathcal{G}_{s}, \alpha)	
	&=
	2 \upsilon_{s,\mathsf{e}_{0}}(s)+\upsilon_{s,\mathsf{e}_{0}}'(s)^{2}.\end{align*}
	Now, since $- \upsilon'' = 1$, the expression $2 \upsilon_{s,\mathsf{e}_{0}}(x_0)+\upsilon_{s,\mathsf{e}_{0}}'(x_0)^{2}$ is independent of the position of $x_0$ on the edge $\mathsf{e}_{0}$.
\end{proof}
Note that in the case of Dirichlet conditions, an analogous Hadamard-type formula is known, see \cite[Proposition 3.11]{pluem}.

Generally, the torsional rigidity is a continuously differentiable function of the  edge lengths. 
Denoting by $\mathcal{G}_{\ell}$ the graph with side lengths $\ell \in (0, \infty)^{\mathsf{E}}$ and by $\upsilon_\ell$ its torsion function for a fixed $\alpha \in [0, \infty)^{\mathsf{V}}$ which is not identically zero, we have
\begin{corollary}
	\label{cor:derivative_lengths}
	The map $(0,\infty)^{\mathsf{E}} \ni \ell \mapsto T(\mathcal{G}_{\ell},\alpha)$ is continuously differentiable with differential 
	\[
	\left[
	2 \upsilon_{\ell,\mathsf{e}_1}(x_{\mathsf{e}_1})+\upsilon_{\ell,\mathsf{e}_{0}}'(x_{\mathsf{e}_1})^{2},
	\cdots,
	2 \upsilon_{\ell,\mathsf{e}_{\lvert \mathsf{E} \rvert}}(x_{\mathsf{e}_{\lvert \mathsf{E} \rvert}})+\upsilon_{\ell,\mathsf{e}_{\lvert \mathsf{E} \rvert}}'(x_{\mathsf{e}_{\lvert \mathsf{E} \rvert}})^{2}
	\right]
	\]
	where $x_{\mathsf{e}_1}, \dots, x_{\mathsf{e}_{\lvert \mathsf{E} \rvert}}$ are arbitrary points such that $x_{\mathsf{e}} \in \mathsf{e}$ for all $\mathsf{e} \in \mathsf{E}$.
\end{corollary}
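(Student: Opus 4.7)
The plan is to reduce this multivariable statement to the single-variable Hadamard-type formula in Theorem~\ref{changeoflength}. By the assumption that $\alpha \in [0,\infty)^{\mathsf{V}}$ is not identically zero, Proposition~\ref{postor} guarantees that the torsion function exists as a strictly positive $H^1$ function for any choice of edge lengths, so Theorem~\ref{changeoflength} applies with $U = (0,\infty)$ for every edge direction. Consequently, fixing any $\ell \in (0, \infty)^{\mathsf{E}}$ and varying only one coordinate $\ell_{\mathsf{e}_i}$ while keeping the others fixed yields that the partial derivative $\partial T(\mathcal{G}_\ell, \alpha)/\partial \ell_{\mathsf{e}_i}$ exists and equals $2\upsilon_{\ell, \mathsf{e}_i}(x_{\mathsf{e}_i}) + \upsilon_{\ell, \mathsf{e}_i}'(x_{\mathsf{e}_i})^2$ for any $x_{\mathsf{e}_i} \in \mathsf{e}_i$.

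Next, I would upgrade existence of all partial derivatives to continuous (Fréchet) differentiability via the standard calculus fact: if all partial derivatives of a function on an open subset of $\mathbb{R}^{\mathsf{E}}$ exist and are continuous, then the function is continuously differentiable and its differential is the vector of partial derivatives. Hence the task reduces to checking that each partial derivative depends continuously on the whole vector $\ell$, not just on its $i$-th coordinate.

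For this continuity, I would use Proposition~\ref{syslin}: the vertex values $g(\mathsf{v}) = \upsilon_\ell(\mathsf{v})$ are determined by the linear system \eqref{system} whose coefficients $1/\ell_\mathsf{e}$ and $2/d_\mathsf{v}^\ell$ depend smoothly on $\ell \in (0,\infty)^{\mathsf{E}}$. As long as the matrix $L_{\mathcal{G}, \alpha}$ stays invertible, which is ensured by strict positivity of its spectrum under the standing hypothesis on $\alpha$ (Theorem~\ref{thm:torsion_discrete_Laplacian} together with Proposition~\ref{postor}), Cramer's rule gives smooth dependence $\ell \mapsto (g(\mathsf{v}))_{\mathsf{v} \in \mathsf{V}}$. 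The explicit edgewise quadratic formula
\[
\upsilon_{\ell, \mathsf{e}}(x_\mathsf{e}) = \frac{(\ell_\mathsf{e} - x_\mathsf{e}) x_\mathsf{e}}{2} + \frac{\upsilon_{\ell, \mathsf{e}}(\ell_\mathsf{e}) - \upsilon_{\ell, \mathsf{e}}(0)}{\ell_\mathsf{e}} x_\mathsf{e} + \upsilon_{\ell, \mathsf{e}}(0)
\]
recalled in the proof of Proposition~\ref{syslin} then shows that both $\upsilon_{\ell, \mathsf{e}_i}(x_{\mathsf{e}_i})$ and $\upsilon_{\ell, \mathsf{e}_i}'(x_{\mathsf{e}_i})$ depend smoothly on $\ell$ for any fixed point $x_{\mathsf{e}_i}$ in the edge, which yields continuity of the partial derivatives and completes the proof.

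The only real obstacle is the bookkeeping in the last paragraph: one must verify that the auxiliary choice of a representative $x_{\mathsf{e}_i} \in \mathsf{e}_i$ does not create discontinuities when the edge length varies (since the edge itself then changes). This is harmless because Theorem~\ref{changeoflength} already established independence of the expression $2\upsilon + (\upsilon')^2$ from the chosen point on the edge, so one can for instance always choose $x_{\mathsf{e}_i}$ to be the midpoint or a fixed endpoint and the continuous dependence of $\upsilon_{\ell, \mathsf{e}_i}$ and $\upsilon_{\ell, \mathsf{e}_i}'$ at that point on $\ell$ follows from the formulas above.
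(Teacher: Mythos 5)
Your proposal is correct and follows essentially the same route as the paper, which simply derives the corollary from the Hadamard-type formula of Theorem~\ref{changeoflength}. You additionally spell out the continuity of the partial derivatives via Proposition~\ref{syslin} and the invertibility of $L_{\mathcal{G},\alpha}$, details the paper leaves implicit; these are sound and consistent with the paper's reasoning.
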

	
\begin{proof}
	This is a straightforward consequence of Theorem~\ref{changeoflength}.
\end{proof}

Similarly, we can study the behavior of torsional rigidity under perturbation of the strengths of vertices. We fix one vertex $\mathsf{v}_{0}\in \mathsf{V}$. For $s>0$, we consider the same graph $\mathcal{G}$ of the same topology, and the Laplace operator $\Delta_{\mathcal{G},s}$ on $\mathcal{G}$ with $\delta$-vertex conditions with strengths 
$$\alpha_{s,\mathsf{v}}=\begin{cases}
	s, &\mathsf{v}=\mathsf{v}_{0},\\
	\alpha_{\mathsf{v}}, & \mathsf{v} \neq \mathsf{v}_{0}.
\end{cases}$$
 For $s >0$, the torsion on $\mathcal{G}$ will be denoted by $\upsilon_{s}=(\upsilon_{s,\mathsf{e}})_{\mathsf{e} \in \mathsf{E}}$.
 
\begin{theorem}\label{strengtder}
		Let $\mathcal{G}$ be a connected metric graph and let $\alpha \in \mathbb{R}^{\mathsf{V}}$ such that the torsion exists.
	Then there exists a neighborhood $U\subset \mathbb{R}$ of $\alpha_{{\mathsf{v}_{0}}} $ such that the map  $U\ni s  \mapsto T(\mathcal{G},\alpha_{s,\mathsf{v}})$ is continuously differentiable and its derivative at $s=\alpha_{{\mathsf{v}_{0}}}$ is
\begin{equation}
	\frac{d}{ds}\bigg \vert_{s=\alpha_{\mathsf{v}_{0}}}T(\mathcal{G},\alpha_{s,\mathsf{v}})= -\upsilon_{s}(\mathsf{v}_{0})^{2}.
\end{equation}
	Furthermore if  $\alpha \in [0,\infty)^{\mathsf{V}}$ and $\alpha_{{\mathsf{v}_{0}}}>0$, then $U$ can be chosen as $(0,\infty)$.
\end{theorem}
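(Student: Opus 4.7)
The plan is to follow the same strategy as in the proof of Theorem~\ref{changeoflength}, but with the perturbation now acting on the strength at $\mathsf{v}_0$ rather than on an edge length. For the existence of a neighborhood $U$ on which the torsion function exists and depends smoothly on $s$, I would invoke Theorem~\ref{thm:torsion_discrete_Laplacian}: the matrix $L_{\mathcal{G}, \alpha_{s, \mathsf{v}}}$ depends affinely, and thus smoothly, on $s$ through the single diagonal entry indexed by $\mathsf{v}_0$, so the infimum of its spectrum remains strictly positive in some open interval around $s = \alpha_{\mathsf{v}_0}$. The implicit function theorem then yields smooth dependence of the vertex values $g_s(\mathsf{v}) = \upsilon_s(\mathsf{v})$ on $s$, and the full torsion function is determined smoothly in $s$ via the edgewise parabola formula from the proof of Proposition~\ref{syslin}.

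To compute the derivative itself, I would use the energy identity
\begin{equation*}
T(\mathcal{G}, \alpha_{s,\mathsf{v}}) = \int_{\mathcal{G}} \upsilon_s(x) \mathrm{d} x = \int_{\mathcal{G}} \upsilon_s'(x)^2 \mathrm{d} x + \sum_{\mathsf{v} \in \mathsf{V}} \alpha_{s, \mathsf{v}} \upsilon_s(\mathsf{v})^2 = h_{\alpha_s}(\upsilon_s),
\end{equation*}
obtained by testing $-\upsilon_s'' = 1$ against $\upsilon_s$ and invoking the $\delta$-vertex conditions. Writing $\dot{\upsilon}_s$ for $\frac{d}{ds}\upsilon_s$, direct differentiation of the first equality gives $\frac{d}{ds} T(\mathcal{G}, \alpha_{s,\mathsf{v}}) = \int_{\mathcal{G}} \dot{\upsilon}_s(x) \mathrm{d} x$. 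Differentiating the right-most expression, the Dirichlet energy term contributes $2\int_{\mathcal{G}} \upsilon_s' \dot{\upsilon}_s' \mathrm{d} x$, which after edgewise integration by parts using $-\upsilon_s'' = 1$ and the $\delta$-conditions $\sum_{\mathsf{e} \in \mathsf{E}_{\mathsf{v}}} \frac{\partial \upsilon_{s,\mathsf{e}}}{\partial n}(\mathsf{v}) = -\alpha_{s, \mathsf{v}} \upsilon_s(\mathsf{v})$ becomes $2\int_{\mathcal{G}} \dot{\upsilon}_s \mathrm{d} x - 2 \sum_{\mathsf{v}} \alpha_{s, \mathsf{v}} \upsilon_s(\mathsf{v}) \dot{\upsilon}_s(\mathsf{v})$. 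The $\delta$-sum contributes $\upsilon_s(\mathsf{v}_0)^2 + 2 \sum_{\mathsf{v}} \alpha_{s, \mathsf{v}} \upsilon_s(\mathsf{v}) \dot{\upsilon}_s(\mathsf{v})$, where the first summand stems from the explicit $s$-dependence of $\alpha_{s, \mathsf{v}_0} = s$. The two implicit variation pieces cancel and we are left with
\begin{equation*}
\int_{\mathcal{G}} \dot{\upsilon}_s \mathrm{d} x = 2 \int_{\mathcal{G}} \dot{\upsilon}_s \mathrm{d} x + \upsilon_s(\mathsf{v}_0)^2,
\end{equation*}
from which $\frac{d}{ds} T(\mathcal{G}, \alpha_{s,\mathsf{v}}) = \int_{\mathcal{G}} \dot{\upsilon}_s \mathrm{d} x = -\upsilon_s(\mathsf{v}_0)^2$ follows.

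For the final assertion that $U = (0, \infty)$ whenever $\alpha \in [0, \infty)^{\mathsf{V}}$ and $\alpha_{\mathsf{v}_0} > 0$, I would observe that for every $s > 0$ the perturbed strength vector $\alpha_{s, \mathsf{v}}$ remains in $[0, \infty)^{\mathsf{V}}$ with $\alpha_{s, \mathsf{v}_0} = s > 0$, so Proposition~\ref{postor} supplies the existence of a strictly positive torsion function $\upsilon_s$ at every such $s$, and continuous differentiability then extends globally by applying the implicit function argument pointwise on $(0, \infty)$.

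The step that I expect to require the most care is the bookkeeping of boundary terms in the edgewise integration by parts: one must separate the explicit and implicit $s$-dependencies inside $\alpha_{s, \mathsf{v}} \upsilon_s(\mathsf{v})^2$ and verify that the implicit piece pairs exactly with the vertex boundary contributions from $2\int_{\mathcal{G}} \upsilon_s' \dot{\upsilon}_s' \mathrm{d} x$, leaving only the single explicit contribution $\upsilon_s(\mathsf{v}_0)^2$. Apart from this sign and cancellation check, all other ingredients are direct analogues of the edge-length perturbation argument in Theorem~\ref{changeoflength}.
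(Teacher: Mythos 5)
Your proposal is correct and follows essentially the same route as the paper, which proves this theorem by the "analogous calculations" to Theorem~\ref{changeoflength}: smoothness via the reduction to the discrete Laplacian $L_{\mathcal{G},\alpha}$ and the implicit function theorem, then differentiation of the energy identity $T = \int_{\mathcal{G}}\upsilon_s = h_{\alpha_s}(\upsilon_s)$ with the implicit variation terms cancelling against the vertex boundary terms, leaving only the explicit contribution $\upsilon_s(\mathsf{v}_0)^2$. The only cosmetic remark is that for the local statement one needs merely invertibility of $L_{\mathcal{G},\alpha_s}$ (an open condition), not strict positivity of its spectrum, since the hypothesis only assumes existence of the torsion function; this does not affect your argument.
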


\begin{proof}
The proof is based on analogous calculations as in Theorem~\ref{changeoflength}.
\end{proof}

Analogously to Corollary~\ref{cor:derivative_lengths}, the torsional rigidity is a differentiable function of the strengths. 
Denote by $\upsilon_\alpha$ the torsion function of $\Delta_{\mathcal{G}, \alpha}$.
\begin{corollary}
	\label{cor:derivative_strengths}
	The map $(0,\infty)^{\mathsf{V}} \ni \alpha \mapsto T(\mathcal{G},\alpha)$ is continuously differentiable with differential $[-\upsilon_{\alpha}^{2}(\mathsf{v}_1),\cdots,-\upsilon_{\alpha}^{2}(\mathsf{v}_{\lvert \mathsf{V} \rvert})]$.
	\begin{proof}
This is a straightforward consequence of Theorem~\ref{strengtder}.
	\end{proof}
\end{corollary}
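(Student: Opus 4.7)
The plan is to deduce the corollary from Theorem~\ref{strengtder} applied coordinate-by-coordinate, together with a continuity argument coming from the discrete-Laplacian reduction of Section~\ref{seclin}. Concretely, for each fixed $\mathsf{v}_j \in \mathsf{V}$ and each $\alpha \in (0,\infty)^{\mathsf{V}}$, Theorem~\ref{strengtder} applied with $\mathsf{v}_0 = \mathsf{v}_j$ (and with the strengths at the other vertices frozen at the values prescribed by $\alpha$) yields that the map $s \mapsto T(\mathcal{G},\alpha_{s,\mathsf{v}_j})$ is differentiable in a neighborhood of $\alpha_{\mathsf{v}_j}$ with derivative $-\upsilon_{\alpha}(\mathsf{v}_j)^2$. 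Since all coordinates of $\alpha$ are strictly positive, the theorem in fact allows us to take the neighborhood to be $(0,\infty)$, so the partial derivative $\partial_{\alpha_{\mathsf{v}_j}} T(\mathcal{G},\alpha)$ exists on all of $(0,\infty)^{\mathsf{V}}$ and equals $-\upsilon_\alpha(\mathsf{v}_j)^2$.

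The next step is to upgrade the existence of all partial derivatives to continuous differentiability. For this it suffices to show that for every $\mathsf{v}_j$ the function $\alpha \mapsto \upsilon_\alpha(\mathsf{v}_j)$ is continuous on $(0,\infty)^{\mathsf{V}}$. This is exactly where I would invoke Proposition~\ref{syslin}: the vertex values $g(\mathsf{v}) = \upsilon_\alpha(\mathsf{v})$ are the unique solution of the linear system $L_{\mathcal{G},\alpha} g = \boldsymbol{1}$, and the matrix $L_{\mathcal{G},\alpha}$ depends affinely (hence smoothly) on $\alpha$. On $(0,\infty)^{\mathsf{V}}$ the hypotheses of Proposition~\ref{postor} hold, so $L_{\mathcal{G},\alpha}$ is invertible with strictly positive spectrum; Cramer's rule (or equivalently the implicit function theorem, which was already used for this purpose in the proof of Theorem~\ref{changeoflength}) then shows that $\alpha \mapsto g = L_{\mathcal{G},\alpha}^{-1}\boldsymbol{1}$ is smooth, in particular continuous.

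Combining the two steps, the gradient $\nabla_\alpha T(\mathcal{G},\alpha) = (-\upsilon_\alpha^2(\mathsf{v}_1),\dots,-\upsilon_\alpha^2(\mathsf{v}_{\lvert \mathsf{V} \rvert}))$ exists and depends continuously on $\alpha$, which by the standard calculus criterion implies that $\alpha \mapsto T(\mathcal{G},\alpha)$ is continuously differentiable on $(0,\infty)^{\mathsf{V}}$ with the stated differential. The only minor subtlety — and thus the main obstacle, mild as it is — is verifying that the partial derivatives obtained from Theorem~\ref{strengtder} are truly continuous jointly in $\alpha$, rather than only separately in each coordinate; this is precisely what the discrete-Laplacian representation from Section~\ref{seclin} takes care of.
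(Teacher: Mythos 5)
Your argument is correct and follows the same route as the paper, which simply cites Theorem~\ref{strengtder}; your coordinatewise application of that theorem, plus the joint continuity of $\alpha \mapsto \upsilon_\alpha(\mathsf{v})$ via the linear system of Proposition~\ref{syslin} (together with Theorem~\ref{thm:torsion_discrete_Laplacian} to guarantee invertibility of $L_{\mathcal{G},\alpha}$), is exactly the detail the paper leaves implicit in calling the corollary a straightforward consequence.
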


\section{Surgery principles}\label{surpri}

From now on, we stick to the setting where all strengths are non-negative and at least one strictly positive. 

Surgery principles on a metric graph investigate how modifications to the shape of a graph will affect quantities such as the spectrum or the torsional rigidity.
These modifications include lengthening or shortening of edges, connecting or cutting vertices, attaching a new graph to the existing one at one of its vertices or transplanting pendant edges to another parts of the graph in a process called \emph{unfolding}. 
In this section, we see that in the presence of vertex strengths, the theory of surgery  becomes richer than in graphs with only Dirichlet- and Kirchhoff-Neumann vertex conditions.

\begin{theorem}[Lengthening edges]
	\label{lengthedge}
	Let $\mathcal{G}$ be a connected metric graph and let $\alpha \in [0, \infty)^{\mathsf{V}}$ be not identically zero.
	Let $\tilde{\mathcal{G}} $ be the metric graph obtained from $\mathcal{G}$ by increasing the length of one edge, i.e. there exists $\mathsf{e}_{0}$ such that $\tilde{\ell_{\mathsf{e}}}=\ell_{\mathsf{e}}$ for $\mathsf{e} \neq \mathsf{e}_{0}$ and $\tilde{\ell_{\mathsf{e}_{0}}}>\ell_{\mathsf{e}_{0}}$. 
	Then $T(\mathcal{G}, \alpha)< T(\tilde{\mathcal{G}}, \alpha).$
\begin{proof}
		This follows from the Hadamard-type formula, Theorem~\ref{changeoflength}.
\end{proof}
\end{theorem}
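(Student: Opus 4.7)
The plan is to invoke Theorem~\ref{changeoflength} directly. Under the present hypotheses ($\alpha \in [0,\infty)^{\mathsf{V}}$ with at least one strictly positive entry), that theorem provides the Hadamard-type formula
\[
\frac{d}{ds}T(\mathcal{G}_s, \alpha) = 2 \upsilon_{s,\mathsf{e}_0}(x_0) + \upsilon_{s,\mathsf{e}_0}'(x_0)^2,
\]
valid for every $s > 0$ and for any $x_0 \in \mathsf{e}_0$, with the map $s \mapsto T(\mathcal{G}_s, \alpha)$ continuously differentiable on all of $(0,\infty)$ by the final sentence of Theorem~\ref{changeoflength}.

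The key observation is strict positivity of $\upsilon_{s,\mathsf{e}_0}$. Because $\alpha \in [0,\infty)^{\mathsf{V}}$ is not identically zero, Proposition~\ref{postor} applies to each perturbed graph $\mathcal{G}_s$ and yields that $\upsilon_s$ is strictly positive everywhere on $\mathcal{G}_s$; in particular $\upsilon_{s,\mathsf{e}_0}(x_0) > 0$ for every choice of $x_0$. Combining this with the nonnegative second term gives
\[
\frac{d}{ds}T(\mathcal{G}_s, \alpha) \geq 2 \upsilon_{s,\mathsf{e}_0}(x_0) > 0
\]
uniformly for all $s \in (0,\infty)$.

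I would then conclude by the mean value theorem applied to the continuously differentiable function $s \mapsto T(\mathcal{G}_s, \alpha)$ on the interval $[\ell_{\mathsf{e}_0}, \tilde{\ell}_{\mathsf{e}_0}]$: the strict positivity of the derivative throughout the interval forces $T(\mathcal{G}, \alpha) < T(\tilde{\mathcal{G}}, \alpha)$. There is no substantial obstacle; the only point worth emphasizing is that positivity of the torsion function persists uniformly along the entire one-parameter family $(\mathcal{G}_s)$ because the assumption on $\alpha$ does not depend on $s$, so the Hadamard formula can be applied at every intermediate length without loss of regularity.
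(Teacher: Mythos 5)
Your proposal is correct and takes exactly the route the paper intends: the paper's proof is the one-line reduction to the Hadamard-type formula of Theorem~\ref{changeoflength}, and you have simply filled in the implicit details (strict positivity of $\upsilon_s$ via Proposition~\ref{postor}, hence strict positivity of the derivative on all of $(0,\infty)$, then the mean value theorem).
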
 

\begin{theorem}[Simultaneous scaling of edges and strengths]
Let $\mathcal{G}$ be a connected metric graph and let $\alpha \in [0, \infty)^{\mathsf{V}}$ be not identically zero.
	Let $t >0$ and let $\tilde{\mathcal{G}}$ be the metric graph obtained by scaling each edge in $\mathcal{G}$ by $t$, i.e. $\tilde{\ell_{\mathsf{e}}}=t \ell_{\mathsf{e}}$ for $\mathsf{e} \in \mathsf{E}$.
\\[1em]	
	 Then $t^{-3}T(\tilde{\mathcal{G}}, \frac{1}{t}\alpha)=T(\mathcal{G}, \alpha)$.
\begin{proof}
	Let $h_{\alpha},\tilde{h}_{ \frac{1}{t}\alpha}$ be the quadratic forms associated with the Laplace operators $\Delta_{\mathcal{G}, \alpha}, \Delta_{\tilde{\mathcal{G}}, \frac{1}{t} \alpha}$, respectively. 
	For $f\in\operatorname{dom}(H)=H^{1}(\mathcal{G})$, and its restriction $f_{\mathsf{e}}$ to an arbitrary edge $\mathsf{e} \in \mathsf{E}$, parametrised as $[0,\ell_{\mathsf{e}}]$, define a function $\tilde{f}$ on $\tilde{\mathcal{G}}$ by 
	$$\tilde{f_{\mathsf{e}}}:=f_{\mathsf{e}}\left( \frac{x}{t}\right) ,$$ 
	$x \in [0,t\ell_{\mathsf{e}}]$. 
	Then, the values of $\tilde{f_{\mathsf{e}}}$ and $f_{\mathsf{e}}$ at each vertex are the same. Furthermore $f \in H^{1}(\mathcal{G})$ if and only if $\tilde{f} \in H^{1}(\tilde{\mathcal{G}})$, and we conclude
	$y=\frac{x}{t}$ 
	$$\tilde{h}_{ \frac{1}{t}\alpha}(\tilde{f})=\sum_{\mathsf{e} \in \mathsf{E}} \int_{0}^{t\ell_{\mathsf{e}}}\lvert \tilde{f_{\mathsf{e}}}'(x) \rvert^{2}\mathrm{d} x +\sum_{\mathsf{v} \in \mathsf{V}}\frac{\alpha_{\mathsf{v}}}{t}\lvert \tilde{f}(\mathsf{v}) \rvert^{2}=\frac{1}{t}h_{\alpha}(f).$$
Moreover $$\left( \int_{\tilde{\mathcal{G}}} \lvert \tilde{f}(x) \rvert \mathrm{d} x\right) ^{2}=t^{2}\left(\int_{\mathcal{G}}\lvert f(x) \rvert\mathrm{d} x\right)^{2}.$$
Altogether, we found $t^{-3}T(\tilde{\mathcal{G}},\frac{1}{t}\alpha)=T(\mathcal{G},\alpha)$.\qedhere
\end{proof}
\end{theorem}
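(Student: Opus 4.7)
The plan is to exploit the fact that the torsion function is uniquely determined by the torsion problem~\eqref{torrob} (under the standing assumption on $\alpha$) and that this PDE behaves predictably under rescaling. So rather than work with the Pólya quotient, I would try to produce an explicit candidate for the torsion function on the rescaled graph.

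First, let $\upsilon$ be the torsion function of $(\mathcal{G}, \alpha)$ and, for each edge $\mathsf{e} \in \mathsf{E}$ parametrized by $[0, \ell_\mathsf{e}]$, define on the scaled graph $\tilde{\mathcal{G}}$ (whose corresponding edge is parametrized by $[0, t\ell_\mathsf{e}]$) the function
\begin{equation*}
\tilde{\upsilon}_\mathsf{e}(x) := t^{2}\, \upsilon_\mathsf{e}\!\left(\tfrac{x}{t}\right), \qquad x \in [0, t\ell_\mathsf{e}].
\end{equation*}
The motivation for the prefactor $t^{2}$ is purely dimensional: two derivatives in $x$ cost a factor $t^{-2}$, so $t^{2}$ is the right weight to preserve the right-hand side $1$ of the torsion equation.

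Next I would verify that $\tilde{\upsilon}$ is precisely the torsion function of $(\tilde{\mathcal{G}}, \tfrac{1}{t}\alpha)$. The interior equation is immediate: $-\tilde{\upsilon}_\mathsf{e}''(x) = -\upsilon_\mathsf{e}''(x/t) = 1$. Continuity across vertices is preserved since $\tilde{\upsilon}_\mathsf{e}(v) = t^{2} \upsilon_\mathsf{e}(v)$. For the $\delta$-vertex condition, a direct computation gives
\begin{equation*}
\sum_{\mathsf{e} \in \mathsf{E}_\mathsf{v}} \frac{\partial \tilde{\upsilon}_\mathsf{e}}{\partial n}(\mathsf{v}) = t \sum_{\mathsf{e} \in \mathsf{E}_\mathsf{v}} \frac{\partial \upsilon_\mathsf{e}}{\partial n}(\mathsf{v}) = -t\, \alpha_\mathsf{v}\, \upsilon(\mathsf{v}) = -\frac{\alpha_\mathsf{v}}{t}\, \tilde{\upsilon}(\mathsf{v}),
\end{equation*}
which is exactly the $\delta$-condition at $\mathsf{v}$ with strength $\alpha_\mathsf{v}/t$. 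By uniqueness (Proposition~\ref{postor}), $\tilde{\upsilon}$ is the torsion function of $(\tilde{\mathcal{G}}, \tfrac{1}{t}\alpha)$.

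Finally, a substitution $y = x/t$ yields
\begin{equation*}
T(\tilde{\mathcal{G}}, \tfrac{1}{t}\alpha) = \int_{\tilde{\mathcal{G}}} \tilde{\upsilon}(x)\, \mathrm{d} x = t^{2} \sum_{\mathsf{e} \in \mathsf{E}} \int_{0}^{t\ell_\mathsf{e}} \upsilon_\mathsf{e}(x/t)\, \mathrm{d} x = t^{3} \int_{\mathcal{G}} \upsilon(y)\, \mathrm{d} y = t^{3}\, T(\mathcal{G}, \alpha),
\end{equation*}
which rearranges to the claim. There is no real obstacle here; the only thing one must be careful about is the factor $t$ arising from the normal derivative (the $1/t$ in the chain rule) when checking the vertex condition, as this is what forces the strengths to scale like $\alpha/t$ rather than staying invariant. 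As an alternative route, one could instead use the variational characterization of Theorem~\ref{thm:varchar}, observing that pulling back via $\tilde{f}(x) = f(x/t)$ scales $h_\alpha$ by $1/t$ and the squared $L^{1}$-norm by $t^{2}$, giving the P\'olya quotient scaling factor $t^{3}$; this is essentially the route taken by the proof in the paper.
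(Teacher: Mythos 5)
Your proof is correct, but it takes a genuinely different route from the paper. The paper argues through the variational characterization of Theorem~\ref{thm:varchar}: the substitution $\tilde f(x)=f(x/t)$ gives a bijection between $H^1(\mathcal{G})$ and $H^1(\tilde{\mathcal{G}})$ under which $h_{\alpha}$ scales by $1/t$ and the squared $L^1$-norm by $t^2$, so the supremum of the P\'olya quotient scales by $t^3$. You instead exhibit the scaled torsion function explicitly, $\tilde{\upsilon}_{\mathsf{e}}(x)=t^{2}\upsilon_{\mathsf{e}}(x/t)$, check the edge equation, continuity, and the $\delta$-condition (correctly tracking the factor $t$ from the normal derivative, which is exactly what forces the strengths to scale as $\alpha/t$), invoke uniqueness, and integrate. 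Each approach has its merits: the paper's fits the surgery framework built around P\'olya quotients and requires no guess of the scaling exponent, while yours bypasses the variational characterization entirely --- so it only needs unique solvability of the torsion problem, not nonnegativity of the strengths (consistent with the relaxed hypotheses of Section~\ref{sec:hadamard}) --- and it yields the pointwise identity for the scaled torsion function rather than only the identity for its integral. One small bookkeeping remark: Proposition~\ref{postor} is stated as existence and uniqueness of a positive solution; the uniqueness you need among all $H^1$ solutions follows from the strict positivity of $\lambda_1$ established in its proof (equivalently, invertibility of $-\Delta_{\tilde{\mathcal{G}},\alpha/t}$), which is how the paper's Definition~\ref{torfunc} is justified in the first place, so the appeal is legitimate.
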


\begin{theorem}[Gluing vertices] \label{gluver}
	Let $\mathcal{G}$ be a connected metric graph and let $\alpha \in [0, \infty)^{\mathsf{V}}$ be not identically zero.
	Let $\mathsf{v}_{1},\mathsf{v}_{2}\in \mathsf{V}$ with $\mathsf{v}_1 \neq \mathsf{v}_2$. Let $\alpha_{\mathsf{v}_{1}},\alpha_{\mathsf{v}_{2}}$ be the strengths at $\mathsf{v}_{1},\mathsf{v}_{2}$ respectively. 
	\\
	Denote by $\tilde{\mathcal{G}}$ the graph obtained from $\mathcal{G}$ by joining $\mathsf{v}_1$ and $\mathsf{v}_2$ to form one single vertex $\mathsf{v}_0$, and let 
	$\tilde	\alpha \in [0, \infty)^{\mathsf{V} \cup \{ \mathsf{v}_0 \} \backslash \{ \mathsf{v}_1, \mathsf{v}_2 \}}$ be defined by
	\[
	\tilde \alpha_{\mathsf{v}}
	=
	\begin{cases}
	\alpha_{\mathsf{v}_1} + \alpha_{\mathsf{v}_2}
	&
	{\mathsf{v}} = {\mathsf{v}_0},
	\\
	\alpha_{\mathsf{v}}
	&
	\text{else}.
	\end{cases}
	\]
	Then $T(\mathcal{G}, \alpha)\geq T(\tilde{\mathcal{G}}, \tilde \alpha).$
\begin{proof}
Let $h_{ \alpha},\tilde{h}_{\tilde \alpha}$ be the quadratic forms associated with the Laplace operators $\Delta_{\mathcal{G}, \alpha}, \Delta_{\tilde{\mathcal{G}}, \tilde \alpha}$, respectively. Then $H^{1}(\tilde{\mathcal{G}})\subset H^{1}(\mathcal{G}) $. Indeed, any $f\in H^{1}(\tilde{\mathcal{G}})$ satisfies $f(\mathsf{v}_0)=f(\mathsf{v}_1)=f(\mathsf{v}_2)$ and 
$$\tilde{h}_{\tilde \alpha}(f)- h_{\alpha}(f)=\tilde \alpha_{\mathsf{v}_{0}} \lvert f(\mathsf{v}_0)\rvert^{2}-\alpha_{\mathsf{v}_{1}} \lvert f(\mathsf{v}_1) \rvert^{2}-\alpha_{\mathsf{v}_{2}} \lvert f(\mathsf{v}_2) \rvert^{2}=0.$$
The result now follows from Theorem~\ref{varchar}.\qedhere
\end{proof}

\end{theorem}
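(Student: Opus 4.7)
The plan is to reduce the inequality to a simple set-inclusion argument at the level of the variational characterization (Theorem~\ref{thm:varchar}) for the P\'olya quotient. The point is that gluing two vertices does not alter the underlying collection of edges or their lengths at all; it only \emph{strengthens} the continuity requirement on admissible functions by forcing equal values at $\mathsf{v}_1$ and $\mathsf{v}_2$. Therefore $H^{1}(\tilde{\mathcal{G}})$ sits naturally inside $H^{1}(\mathcal{G})$ as the subspace
\[
  \{ f \in H^{1}(\mathcal{G}) : f(\mathsf{v}_1) = f(\mathsf{v}_2) \}.
\]

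First, I would record this identification carefully: a function on $\tilde{\mathcal{G}}$ is edgewise identical with a function on $\mathcal{G}$, and continuity at the merged vertex $\mathsf{v}_0$ is exactly the extra constraint $f(\mathsf{v}_1) = f(\mathsf{v}_2)$ on $\mathcal{G}$. Since edge lengths are unchanged, the $L^{1}$ norm and the edge-integral $\int_{\mathcal{G}}|f'|^{2}\, \mathrm{d} x$ coincide for corresponding functions on $\mathcal{G}$ and $\tilde{\mathcal{G}}$.

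Second, I would check that the two quadratic forms agree on this subspace. The only difference between $h_{\alpha}$ and $\tilde{h}_{\tilde\alpha}$ comes from the vertex terms at $\mathsf{v}_1, \mathsf{v}_2, \mathsf{v}_0$. Under the constraint $f(\mathsf{v}_1) = f(\mathsf{v}_2) =: c$, one has
\[
  \tilde\alpha_{\mathsf{v}_0}|f(\mathsf{v}_0)|^{2}
  = (\alpha_{\mathsf{v}_1} + \alpha_{\mathsf{v}_2})|c|^{2}
  = \alpha_{\mathsf{v}_1}|f(\mathsf{v}_1)|^{2} + \alpha_{\mathsf{v}_2}|f(\mathsf{v}_2)|^{2},
\]
and all other vertex terms are untouched; hence $\tilde{h}_{\tilde\alpha}(f) = h_{\alpha}(f)$ whenever $f \in H^{1}(\tilde{\mathcal{G}})$.

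Finally, I would invoke Theorem~\ref{thm:varchar} twice: $T(\mathcal{G},\alpha)$ is the supremum of the P\'olya quotient $\|f\|_{L^{1}}^{2}/h_{\alpha}(f)$ over all $f \in H^{1}(\mathcal{G})$, while $T(\tilde{\mathcal{G}},\tilde\alpha)$ is the supremum of the same quotient, up to the identifications above, over the smaller set $H^{1}(\tilde{\mathcal{G}}) \subset H^{1}(\mathcal{G})$. Taking the supremum of the same functional over a smaller admissible set can only decrease it, which yields $T(\tilde{\mathcal{G}},\tilde\alpha) \leq T(\mathcal{G},\alpha)$. There is no real obstacle here beyond being careful to verify the identification and the exact cancellation of vertex terms; the engine of the argument is entirely the variational characterization together with the additive structure of the $\delta$-strengths under vertex identification.
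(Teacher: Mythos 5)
Your proposal is correct and follows essentially the same route as the paper: identify $H^{1}(\tilde{\mathcal{G}})$ with the subspace $\{f \in H^{1}(\mathcal{G}) : f(\mathsf{v}_1)=f(\mathsf{v}_2)\}$, observe that the vertex terms make $\tilde{h}_{\tilde\alpha}$ and $h_{\alpha}$ agree there, and conclude by comparing the suprema of the P\'olya quotient via Theorem~\ref{thm:varchar}. No gaps; your write-up is if anything slightly more explicit about the identification of edge integrals than the paper's own proof.
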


Clearly, the inverse of the gluing vertices is \emph{cutting} through vertices. 
Let us formulate it here precisely for the sake of clarity: 

\begin{corollary}[Cutting vertices]\label{cuttingvertices}
Let $\mathcal{G}$ be a connected metric graph and let $\alpha \in [0, \infty)^{\mathsf{V}}$ be not identically zero.
Let $\mathsf{v}_{0}\in \mathsf{V}$ with  strength $\alpha_{\mathsf{v}_{0}}$. 
Let $\tilde{\mathcal{G}}$ be a graph obtained from $\mathcal{G}$ by cutting $\mathsf{v_0}$ into two vertices $\mathsf{v}_1$ and $\mathsf{v}_2$, and let 
$\tilde	\alpha \in [0, \infty)^{\mathsf{V} \cup \{ \mathsf{v}_1, \mathsf{v}_2 \} \backslash \{ \mathsf{v}_0 \} }$ be defined by
\[
\tilde \alpha_{\mathsf{v}}
=
\begin{cases}
	\alpha_{\mathsf{v}_1}  
	&
	{\mathsf{v}} = {\mathsf{v}_1},
	\\
	\alpha_{\mathsf{v}_2}
	 &
	{\mathsf{v}} = {\mathsf{v}_2},
	\\
	\alpha_{\mathsf{v}}
	&
	\text{else},
\end{cases}
\]
where $\alpha_{{\mathsf{v}_{0}}}=\alpha_{{\mathsf{v}_{1}}}+\alpha_{{\mathsf{v}_{2}}}$. 
If $\tilde{\mathcal{G}}$ is connected, then $T(\mathcal{G}, \alpha)\leq T(\tilde{\mathcal{G}}, \tilde \alpha).$
\end{corollary}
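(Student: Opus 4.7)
The plan is to read this corollary as a direct consequence of Theorem~\ref{gluver} applied in reverse. Cutting a vertex is literally the inverse operation of gluing two vertices: starting from $\tilde{\mathcal{G}}$ with the strengths $\tilde{\alpha}$ specified in the statement, gluing $\mathsf{v}_1$ and $\mathsf{v}_2$ into a single vertex and summing their strengths recovers exactly the pair $(\mathcal{G}, \alpha)$, because by assumption $\alpha_{\mathsf{v}_1}+\alpha_{\mathsf{v}_2}=\alpha_{\mathsf{v}_0}$ and all other vertex strengths coincide. Hence the inequality we want is just Theorem~\ref{gluver} read from $\tilde{\mathcal{G}}$ to $\mathcal{G}$.

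Concretely, I would proceed as follows. First, verify that the hypotheses of Theorem~\ref{gluver} are met for $(\tilde{\mathcal{G}}, \tilde{\alpha})$: the graph $\tilde{\mathcal{G}}$ is connected by assumption; the strengths $\tilde{\alpha}$ are non-negative since $\alpha_{\mathsf{v}_1}, \alpha_{\mathsf{v}_2}\geq 0$ is implicit in splitting a non-negative number $\alpha_{\mathsf{v}_0}$ into two non-negative summands; and $\tilde{\alpha}$ is not identically zero because $\alpha$ is not (if the only positive strength of $\alpha$ is at $\mathsf{v}_0$, then at least one of $\alpha_{\mathsf{v}_1}, \alpha_{\mathsf{v}_2}$ must be positive, and otherwise $\tilde{\alpha}$ inherits the same nonzero strength at some other vertex). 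Second, apply Theorem~\ref{gluver} to the pair $(\tilde{\mathcal{G}}, \tilde{\alpha})$, gluing $\mathsf{v}_1$ and $\mathsf{v}_2$: the resulting merged vertex carries strength $\alpha_{\mathsf{v}_1}+\alpha_{\mathsf{v}_2}=\alpha_{\mathsf{v}_0}$, and we obtain precisely $(\mathcal{G}, \alpha)$. The theorem then yields $T(\tilde{\mathcal{G}}, \tilde{\alpha})\geq T(\mathcal{G}, \alpha)$, which is the desired inequality.

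There is essentially no obstacle beyond bookkeeping: the only thing to check carefully is that the strengths on the two sides are compatible with the gluing operation, which is precisely what the identity $\alpha_{\mathsf{v}_0}=\alpha_{\mathsf{v}_1}+\alpha_{\mathsf{v}_2}$ guarantees. The connectedness assumption on $\tilde{\mathcal{G}}$ is crucial, since cutting a vertex may disconnect the graph and then Theorem~\ref{varchar} would not be applicable in the form we need. With all hypotheses verified, the proof reduces to a single invocation of Theorem~\ref{gluver}.
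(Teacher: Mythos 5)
Your proposal is correct and matches the paper's intent exactly: the paper states this corollary without a separate proof, presenting it precisely as the reverse reading of Theorem~\ref{gluver} (cutting being the inverse of gluing), which is what you carry out, including the necessary checks that $\tilde{\mathcal{G}}$ is connected and that $\tilde{\alpha}$ is non-negative and not identically zero.
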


The following theorem states that $T(\mathcal{G})$ monotonically decreases with respect to strengths.
\begin{theorem}[Changing the strength of a vertex]
	\label{changingstrength}
	Let $\mathcal{G}$ be a connected metric graph, let $\mathsf{v}_0 \in \mathsf{V}$ and let $\alpha, \tilde \alpha \in [0, \infty)^{\mathsf{V}}$ be not identically zero  with
	\[
	\tilde \alpha_{\mathsf{v}_{0}} < \alpha_{\mathsf{v}_{0}}, 
	\quad
	\text{and}
	\quad
	\tilde \alpha_{\mathsf{v}_{0}} = \alpha_{\mathsf{v}_{0}}
	\quad
	\text{for $\mathsf{v} \neq \mathsf{v}_0$}.
	\]
	Then $T(\mathcal{G}, \alpha)< T(\mathcal{G}, \tilde \alpha).$
\end{theorem}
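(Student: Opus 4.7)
The plan is to argue from the variational characterization of the torsional rigidity in Theorem~\ref{thm:varchar}. Let $\upsilon$ denote the torsion function associated with the strengths $\alpha$. By Proposition~\ref{postor}, $\upsilon$ is strictly positive on all of $\mathcal{G}$, and in particular $\upsilon(\mathsf{v}_{0}) > 0$; this single inequality is the engine that will turn the weak monotonicity into strict monotonicity.

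Since $\upsilon \in H^{1}(\mathcal{G})$ is an admissible test function for the P\'olya quotient with strengths $\tilde\alpha$, Theorem~\ref{thm:varchar} applied to $(\mathcal{G}, \tilde\alpha)$ gives
\[
T(\mathcal{G},\tilde\alpha) \;\geq\; \frac{\lVert \upsilon \rVert_{L^{1}(\mathcal{G})}^{2}}{h_{\tilde\alpha}(\upsilon)}.
\]
Because $\alpha$ and $\tilde\alpha$ agree outside $\mathsf{v}_{0}$, the very definition of $h_{\bullet}$ yields
\[
h_{\tilde\alpha}(\upsilon) \;=\; h_{\alpha}(\upsilon) \;-\; \bigl(\alpha_{\mathsf{v}_{0}} - \tilde\alpha_{\mathsf{v}_{0}}\bigr)\,\upsilon(\mathsf{v}_{0})^{2},
\]
and the hypotheses $\alpha_{\mathsf{v}_{0}} > \tilde\alpha_{\mathsf{v}_{0}}$ together with $\upsilon(\mathsf{v}_{0}) > 0$ make this strictly less than $h_{\alpha}(\upsilon)$. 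Since $\upsilon$ is the maximizer of the P\'olya quotient for $\alpha$, one has $T(\mathcal{G},\alpha) = \lVert \upsilon \rVert_{L^{1}(\mathcal{G})}^{2}/h_{\alpha}(\upsilon)$, and chaining these relations produces
\[
T(\mathcal{G},\tilde\alpha) \;\geq\; \frac{\lVert \upsilon \rVert_{L^{1}(\mathcal{G})}^{2}}{h_{\tilde\alpha}(\upsilon)} \;>\; \frac{\lVert \upsilon \rVert_{L^{1}(\mathcal{G})}^{2}}{h_{\alpha}(\upsilon)} \;=\; T(\mathcal{G},\alpha),
\]
which is the claim.

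An alternative route is to integrate the Hadamard-type formula of Theorem~\ref{strengtder}, which computes the derivative of $s \mapsto T(\mathcal{G},\alpha_{s,\mathsf{v}})$ as $-\upsilon_{s}(\mathsf{v}_{0})^{2} < 0$; along the affine interpolation between $\alpha$ and $\tilde\alpha$, both endpoints satisfy the hypotheses of Proposition~\ref{postor}, so the torsion function remains strictly positive and the derivative is strictly negative throughout, giving strict monotonicity upon integration. The variational route is shorter and avoids having to track positivity along the entire path. The only non-mechanical point in either argument is the strict positivity $\upsilon(\mathsf{v}_{0}) > 0$, which is supplied by Proposition~\ref{postor}, so this is really the sole ingredient beyond Theorem~\ref{thm:varchar} itself.
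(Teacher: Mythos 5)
Your variational argument is correct, but it is not the route the paper takes: the paper proves this theorem in one line by invoking the Hadamard-type formula of Theorem~\ref{strengtder} (the derivative of $s \mapsto T(\mathcal{G}, \alpha_{s,\mathsf{v}})$ equals $-\upsilon_s(\mathsf{v}_0)^2 < 0$) together with positivity of the torsion function -- i.e.\ exactly your ``alternative route.'' Your primary argument instead tests the P\'olya quotient for $\tilde\alpha$ with the torsion function $\upsilon$ of $\alpha$, uses $h_{\tilde\alpha}(\upsilon) = h_{\alpha}(\upsilon) - (\alpha_{\mathsf{v}_0}-\tilde\alpha_{\mathsf{v}_0})\upsilon(\mathsf{v}_0)^2 < h_{\alpha}(\upsilon)$ via $\upsilon(\mathsf{v}_0)>0$ from Proposition~\ref{postor}, and the identity $T(\mathcal{G},\alpha)=\lVert\upsilon\rVert_{L^1}^2/h_{\alpha}(\upsilon)$; the chain is valid (one could add the one-word check that $h_{\tilde\alpha}(\upsilon)>0$, which holds since $-\upsilon''=1$ forces $\upsilon'\not\equiv 0$). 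What your route buys is self-containedness: it needs only Theorem~\ref{thm:varchar} and positivity, avoids differentiability in $s$ and the domain-of-differentiability bookkeeping of Theorem~\ref{strengtder}, and is in the same spirit as the paper's proofs of the gluing and inserting theorems. What the paper's route buys is brevity given the Hadamard machinery already developed, and it fits the paper's programme of deriving surgery principles from Hadamard formulas; if you use that route, note that strict positivity of the torsion function (hence strict negativity of the derivative) must hold at \emph{all} interpolated strengths, which does follow since every convex combination of $\alpha$ and $\tilde\alpha$ is again componentwise non-negative and not identically zero -- your phrasing ``both endpoints satisfy the hypotheses'' should be tightened to say this.
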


\begin{proof}
This follows from the Hadamard-type formula, Theorem~\ref{strengtder}, together with positivity	of the torsion function.
\end{proof}

\begin{definition}
	Let $\mathcal{G}$ be a connected metric graph.	
	Let $\mathsf{v}_{0} \in \mathsf{V}$, and let $\mathcal{G}'$ be another connected metric graph. 
	We say that a metric graph $\tilde{\mathcal{G}}$ is obtained by inserting $\mathcal{G}'$ into $\mathcal{G}$ at $\mathsf{v}_{0}$ if it is formed by removing $\mathsf{v}_{0}$ from $\mathcal{G}$ and attaching each of the edges $\mathsf{e} \in \mathsf{E}_{\mathsf{v}_{0}}$ to one of the vertices of $\mathcal{G}'$.
\end{definition}

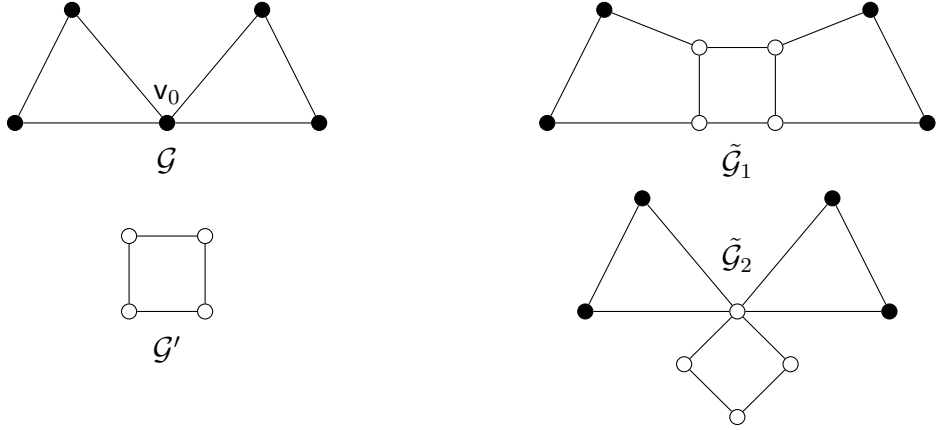
\begin{figure}
	\begin{tikzpicture}
		\begin{scope}
			\node[draw, circle, inner sep=2pt, fill, label=above:{$\mathsf{v}_0$}] (A) at (0, 0) {};
			\node[draw, circle, inner sep=2pt, fill] (B) at (-2,0) {};
			\node[draw, circle, inner sep=2pt, fill] (C) at (-1.25,1.5) {};
			\node[draw, circle, inner sep=2pt, fill] (D) at (2,0) {};
			\node[draw, circle, inner sep=2pt, fill] (E) at (1.25,1.5) {};

			\draw (A) -- (B) -- (C) -- (A) -- (D) -- (E) -- (A);
			
			\draw (0,-.5) node {$\mathcal{G}$};
		\end{scope}
		
		\begin{scope}[xshift = -.5cm, yshift = -2.5cm]
			\node[draw, circle, inner sep=2pt] (A) at (0,0) {};
			\node[draw, circle, inner sep=2pt] (B) at (1,0) {};
			\node[draw, circle, inner sep=2pt] (C) at (1,1) {};
			\node[draw, circle, inner sep=2pt] (D) at (0,1) {};
			\draw (A) -- (B) -- (C) -- (D) -- (A);
			\draw (.5,-.5) node {$\mathcal{G}'$};
			
		\end{scope}
		
		\begin{scope}[xshift = 7cm, yshift = 0cm ]
			\node[draw, circle, inner sep=2pt] (A) at (0, 0) {};
			\node[draw, circle, inner sep=2pt, fill] (B) at (-2,0) {};
			\node[draw, circle, inner sep=2pt, fill] (C) at (-1.25,1.5) {};
			\node[draw, circle, inner sep=2pt] (D) at (0,1) {};
			\node[draw, circle, inner sep=2pt] (E) at (1,0) {};
			\node[draw, circle, inner sep=2pt, fill] (F) at (3,0) {};
			\node[draw, circle, inner sep=2pt, fill] (G) at (2.25,1.5) {};		
			\node[draw, circle, inner sep=2pt] (H) at (1,1) {};		
			\draw (A) -- (B) -- (C) -- (D) -- (A);
			\draw (E) -- (F) -- (G) -- (H) -- (E);
			\draw (A) --(E);
			\draw (D) -- (H);
			
			\draw (.5,-.5) node {$\tilde{\mathcal{G}}_1$};
		\end{scope}
		
			\begin{scope}[xshift = 7.5cm, yshift = -2.5cm]
			\node[draw, circle, inner sep=2pt] (A) at (0, 0) {};				
			\node[draw, circle, inner sep=2pt, fill] (B) at (-2,0) {};
			\node[draw, circle, inner sep=2pt, fill] (C) at (-1.25,1.5) {};
			\node[draw, circle, inner sep=2pt, fill] (D) at (2,0) {};
			\node[draw, circle, inner sep=2pt, fill] (E) at (1.25,1.5) {};
			
			\node[draw, circle, inner sep=2pt] (F) at (.7, -.7) {};	
			\node[draw, circle, inner sep=2pt] (G) at (0,-1.4) {};	
			\node[draw, circle, inner sep=2pt] (H) at (-.7, -.7) {};

			\draw (A) -- (B) -- (C) -- (A) -- (D) -- (E) -- (A);
			\draw (A) -- (F) -- (G) -- (H) -- (A);
			
			\draw (0,.75) node {$\tilde{\mathcal{G}}_2$};
		\end{scope}

	\end{tikzpicture}
	
	\caption{The process of inserting $\mathcal{G}'$ into $\mathcal{G}$ at a vertex $\mathsf{v}_0 \in \mathcal{G}$ is not unique.
	Both $\tilde{\mathcal{G}_1}$ and $\tilde{\mathcal{G}_2}$ can be obtained.}
	\label{fig_inserting}
\end{figure}

We refer to Figure~\ref{fig_inserting} for an illustration. 
Note that the process of inserting a metric graph into another one at a vertex is not necessarily unique.

\begin{theorem}[Inserting a graph at a vertex]
	\label{insgrap}
		Let $\mathcal{G}$, $\mathcal{G}'$, be connected metric graphs with vertex set $\mathsf{V}$ and $\mathsf{V}'$, respectively. 
		Let $\alpha \in [0, \infty)^{\mathsf{V}}$ and $\alpha' \in [0, \infty)^{\mathsf{V}'}$  be not identically zero.
		Let $\mathsf{v_0} \in \mathsf{V}$, and let $\tilde{\mathcal{G}}$ be a metric graph formed by inserting $\mathcal{G}'$ into $\mathcal{G}$ at $\mathsf{v}_{0}$.
		Let $\tilde \alpha \in [0, \infty)^{\mathsf{V} \cup \mathsf{V}' \backslash \{ \mathsf{v}_0 \}}$ be defined via
		\[
		\tilde \alpha_{\mathsf{v}}
		=
		\begin{cases}
			\alpha_{\mathsf{v}}
			&
			\mathsf{v} \in \mathsf{V},\\
			\alpha'_{\mathsf{v}},
			&
			\mathsf{v} \in \mathsf{V}'.
		\end{cases}						
		\]
		If	
		\[
		\sum_{\mathsf{v}' \in \mathsf{V}'}\alpha_{\mathsf{v}'} \leq \alpha_{\mathsf{v}_{0}},
		\]
		then
		\[
		T(\mathcal{G}, \alpha) \leq T(\tilde{\mathcal{G}}, \tilde \alpha).
		\]
\end{theorem}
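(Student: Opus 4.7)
The plan is to deduce the inequality from the variational characterization in Theorem~\ref{thm:varchar} applied to $T(\tilde{\mathcal{G}}, \tilde{\alpha})$ with a carefully constructed test function built from the torsion function $\upsilon$ of $\mathcal{G}$. The guiding idea is that extending $\upsilon$ to all of $\tilde{\mathcal{G}}$ by the constant value $\upsilon(\mathsf{v}_0)$ on the inserted subgraph $\mathcal{G}'$ inflates the $L^1$-norm without increasing the Dirichlet integral, while the concentration-of-strength hypothesis guarantees that the additional $\delta$-vertex terms do not exceed the original one at $\mathsf{v}_0$.

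\textbf{Step 1: Construction of a test function.} I would define $\tilde{u} \colon \tilde{\mathcal{G}} \to \mathbb{R}$ edgewise by $\tilde{u} = \upsilon$ on every edge inherited from $\mathcal{G}$ and $\tilde{u} \equiv c$ on every edge of the inserted copy of $\mathcal{G}'$, where $c := \upsilon(\mathsf{v}_0)$. Each edgewise restriction is in $H^1$, and continuity at every vertex of $\tilde{\mathcal{G}}$ is immediate: within $\mathcal{G}'$ the function is constant, at vertices of $\mathsf{V} \setminus \{\mathsf{v}_0\}$ it inherits continuity from $\upsilon$, and at the vertices of $\mathsf{V}'$ where formerly $\mathsf{v}_0$-adjacent edges have been reattached, the values on both sides equal $c$. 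Hence $\tilde{u} \in H^1(\tilde{\mathcal{G}})$.

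\textbf{Step 2: Evaluating the P\'olya quotient.} The $L^1$-norm of $\tilde{u}$ satisfies
\begin{equation*}
\int_{\tilde{\mathcal{G}}} |\tilde{u}| \, \mathrm{d} x \;=\; \int_{\mathcal{G}} \upsilon \, \mathrm{d} x + c \, |\mathcal{G}'| \;\geq\; T(\mathcal{G}, \alpha),
\end{equation*}
using that $\upsilon > 0$ by Proposition~\ref{postor}, so $c>0$. Since $\tilde{u}' \equiv 0$ on the inserted subgraph, the kinetic term equals $\int_{\mathcal{G}} |\upsilon'|^2 \, \mathrm{d} x$, and the vertex contribution splits as
\begin{equation*}
\sum_{\mathsf{v} \in \mathsf{V} \setminus \{\mathsf{v}_0\}} \alpha_{\mathsf{v}} \, \upsilon(\mathsf{v})^2 \;+\; c^2 \sum_{\mathsf{v}' \in \mathsf{V}'} \alpha'_{\mathsf{v}'} \;\leq\; \sum_{\mathsf{v} \in \mathsf{V} \setminus \{\mathsf{v}_0\}} \alpha_{\mathsf{v}} \, \upsilon(\mathsf{v})^2 + \alpha_{\mathsf{v}_0} \upsilon(\mathsf{v}_0)^2,
\end{equation*}
where the hypothesis $\sum_{\mathsf{v}'} \alpha'_{\mathsf{v}'} \leq \alpha_{\mathsf{v}_0}$ is invoked exactly once. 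Combining these, $h_{\tilde{\alpha}}(\tilde{u}) \leq h_{\alpha}(\upsilon)$.

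\textbf{Step 3: Closing the argument.} Testing the torsion equation $-\Delta_{\mathcal{G},\alpha} \upsilon = 1$ against $\upsilon$ itself and integrating by parts edgewise, together with the $\delta$-vertex conditions, gives the identity $h_\alpha(\upsilon) = \int_{\mathcal{G}} \upsilon\,\mathrm{d} x = T(\mathcal{G},\alpha)$. Applying Theorem~\ref{thm:varchar} to $\tilde{\mathcal{G}}$ (noting that $\tilde{\alpha}$ is non-negative with at least one strictly positive entry, inherited from $\alpha$ or $\alpha'$) and inserting $\tilde{u}$ yields
\begin{equation*}
T(\tilde{\mathcal{G}}, \tilde{\alpha}) \;\geq\; \frac{\bigl(\int_{\tilde{\mathcal{G}}} |\tilde{u}|\,\mathrm{d} x\bigr)^2}{h_{\tilde{\alpha}}(\tilde{u})} \;\geq\; \frac{T(\mathcal{G},\alpha)^2}{T(\mathcal{G},\alpha)} \;=\; T(\mathcal{G},\alpha).
\end{equation*}
The only nontrivial checks are the continuity of $\tilde{u}$ at the gluing vertices and the bookkeeping of vertex terms; neither is a real obstacle, so the main conceptual point is simply recognizing that extending by the constant $\upsilon(\mathsf{v}_0)$ is the right test function, and that the strength inequality is precisely what is needed to absorb the new $\delta$-contributions.
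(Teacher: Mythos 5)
Your proof is correct and follows essentially the same route as the paper: extend a function from $\mathcal{G}$ to $\tilde{\mathcal{G}}$ by the constant value at $\mathsf{v}_0$, observe that the Dirichlet integral is unchanged, the $L^1$-norm does not decrease, and the strength hypothesis absorbs the new $\delta$-terms, then conclude via the variational characterization. The only cosmetic difference is that you specialize the test function to the torsion function $\upsilon$ (using $h_\alpha(\upsilon)=T(\mathcal{G},\alpha)$), whereas the paper compares P\'olya quotients for an arbitrary $f\in H^1(\mathcal{G})$ and takes the supremum; both are equally valid.
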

	\begin{proof}
		Let $f\in H^{1}(\mathcal{G})$. Extend $f$ to $\tilde{\mathcal{G}}$ constantly, i.e. $\tilde{f}:=f(\mathsf{v}_{0})$ on $\mathcal{G}'$. Then 
		\begin{eqnarray*}
			\tilde{h}_{\tilde \alpha}(\tilde{f}) &=& \int_{\tilde{\mathcal{G}}} \lvert \tilde{f}'(x) \rvert^{2}\mathrm{d} x+\sum_{w \in \mathsf{V}\setminus\{\mathsf{v}_{0}\}}\alpha_{\mathsf{v}} \lvert f(v) \rvert^{2}+ \lvert f(\mathsf{v}_{0}) \rvert^{2}\sum_{v' \in \mathsf{V}'}\alpha_{\mathsf{v}'}\\
			&  \leq & h_{ \alpha}(f)-\alpha_{\mathsf{v}_{0}} \lvert f(\mathsf{v}_{0}) \rvert^{2}+ \alpha_{\mathsf{v}_{0}} \lvert f(\mathsf{v}_{0}) \rvert^{2}
			=
			h_{ \alpha}(f).
		\end{eqnarray*}
		Furthermore,
		$$\left( \int_{\tilde{\mathcal{G}}} \lvert f(x) \rvert \mathrm{d} x\right) ^{2}=\left( \int_{\mathcal{G}} \lvert f(x) \rvert \mathrm{d} x + \lvert f(\mathsf{v}_{0})\rvert \lvert \mathcal{G}' \rvert \right)^{2} \geq \left( \int_{\mathcal{G}}\lvert f(x) \rvert\mathrm{d} x\right)^{2}.  $$
		This shows the claim.
	\end{proof}

\begin{example}
	\label{exa:inserting}
Let us show that the assumption $\sum_{\mathsf{v}' \in \mathsf{V}'}\alpha_{\mathsf{v}'} \leq \alpha_{\mathsf{v}_{0}}$ in Theorem~\ref{insgrap} cannot be dropped. 

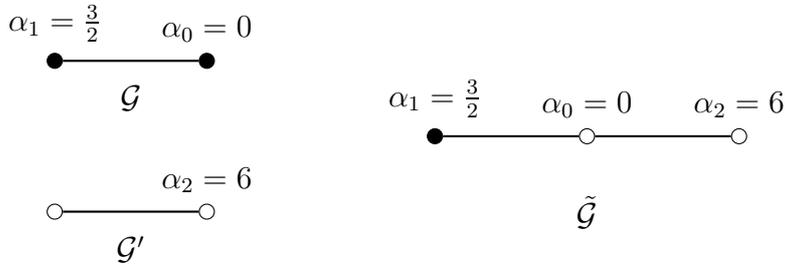
\begin{figure}[ht]
		\begin{tikzpicture}
		\begin{scope}[yshift = 1cm] 
			\node[draw, circle, inner sep=2pt, fill, label=above:{$\alpha_{1}=\frac{3}{2}$}] (A) at (0, 0) {};
			\node[draw, circle, inner sep=2pt, fill,  label=above:{$\alpha_{0}=0$}] (B) at (2, 0) {};
			\draw[thick] (A) -- (B);
			\draw (1,-.5) node {$\mathcal{G}$};
		\end{scope}

		\begin{scope}[yshift = -1cm] 
			\node[draw, circle, inner sep=2pt] (A) at (0, 0) {};
			\node[draw, circle, inner sep=2pt,  label=above:{$\alpha_{2}=6$}] (B) at (2, 0) {};
			\draw[thick] (A) -- (B);
			\draw (1,-.5) node {$\mathcal{G}'$};
		\end{scope}

		\begin{scope}[xshift = 3cm]
			\node[draw, circle, inner sep=2pt, label=above:{$\alpha_{2}=6$}] (D) at (6, 0) {};
			\node[draw, circle, inner sep=2pt, label=above:{$\alpha_{0}=0$}] (E) at (4, 0) {};
			\node[draw, circle, inner sep=2pt, fill, label=above:{$\alpha_{1}=\frac{3}{2}$}] (F) at (2, 0) {};
			\draw (4,-1) node {$\tilde{\mathcal{G}}$};
			\draw[thick] (D) -- (E);
			\draw[thick] (E) -- (F);
		\end{scope}

		\end{tikzpicture}
		
	\caption{Inserting an interval graph $\mathcal{G}'$ in an interval graph $\mathcal{G}$ to obtain the graph $\tilde{\mathcal{G}}$ as in Example~\ref{exa:inserting}}
	\label{fig:inserting}
\end{figure}
Consider an interval graph $\mathcal{G}$ of length $1$ with strengths $\frac{3}{2}$ and $0$ at its endpoints, see the left graph in Figure~\ref{fig:inserting}.
Its torsional rigidity is $1$. 
Now insert another interval graph $\mathcal{G}'$ of length $1$ to this graph at the vertex with strength $0$, and set the strength to be $6$ at the new vertex in the resulting graph $\tilde{\mathcal{G}}$, see the graph on the left in Figure~\ref{fig:inserting}.  
Its torsional rigidity is $\frac{23}{33}<1$, 
showing that the the assertion of Theorem~\ref{insgrap} cannot hold in this case.
\end{example}

In a metric graph $\mathcal{G}$, we say that an edge $\mathsf{e}$ is \emph{pendant} if one of its vertices has degree one.

\begin{theorem}[Unfolding pendant edges]
	\label{thm:unfolding}
	Let $\mathcal{G}$ be a connected metric graph and let $\alpha \in [0, \infty)^{\mathsf{V}}$ be not identically zero.
	Let edges $\mathsf{e}_{1},\dots,\mathsf{e}_{r}$ be pendant and incident to the same vertex $\mathsf{v}_{0}$, and denote by $\mathsf{v}_{j}$ the end points of $e_{j}$ respectively. 
	Let $\tilde{\mathcal{G}}$ be the metric graph obtained by replacing $\mathsf{e}_{1},\dots,\mathsf{e}_{r}$ by a single pendant edge $\hat{\mathsf{e}}$ of length $\ell_{\hat{\mathsf{e}}} = \sum_{j=1}^r \ell_{\mathsf{e}_{j}}$, and denote the vertex at the end point of $\hat{\mathsf{e}}$ by $\hat{\mathsf{v}}$.
	Let $\tilde \alpha \in [0, \infty)^{\mathsf{V} \cup \{ \hat{\mathsf{v}} \} \backslash \{ \mathsf{v}_{1},\dots,\mathsf{v}_{r} \}}$ be defined by
	\[
		\tilde \alpha_{\mathsf{v}}
		=
		\alpha_{\mathsf{v}}
		\quad
		\text{if $\mathsf{v} \in \mathsf{V}$, and}
		\quad		
		\tilde	\alpha_{\hat{\mathsf{v}}} = 0.
	\]
	Then $T(\mathcal{G}, \alpha)\leq T(\tilde{\mathcal{G}}, \tilde \alpha)$.
\end{theorem}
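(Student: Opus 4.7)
The strategy is to combine monotonicity of $T$ in the vertex strengths (Theorem~\ref{changingstrength}) with a transplantation test function inserted into the variational characterization of Theorem~\ref{thm:varchar}. I first reduce to the situation in which the pendant endpoints carry Kirchhoff-Neumann conditions, where the pendant restrictions of the torsion function are explicit and monotone increasing from $\mathsf{v}_0$, and then concatenate them into a test function on $\hat{\mathsf{e}}$.

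\emph{Step 1 (remove the pendant-endpoint strengths).} Successive application of Theorem~\ref{changingstrength} to $\mathsf{v}_1,\dots,\mathsf{v}_r$ yields $T(\mathcal{G},\alpha)\leq T(\mathcal{G},\alpha')$, where $\alpha'$ agrees with $\alpha$ outside $\{\mathsf{v}_1,\dots,\mathsf{v}_r\}$ and vanishes at each $\mathsf{v}_j$. The implicit requirement that $T(\tilde{\mathcal{G}},\tilde\alpha)$ be defined forces $\alpha'\not\equiv 0$, so the torsion function $\upsilon$ of $(\mathcal{G},\alpha')$ exists and is positive. Parametrizing each pendant $\mathsf{e}_j$ as $[0,\ell_{\mathsf{e}_j}]$ with $0$ identified with $\mathsf{v}_0$, the Kirchhoff condition at $\mathsf{v}_j$ together with $-\upsilon_{\mathsf{e}_j}''=1$ gives the explicit expression
\[
	\upsilon_{\mathsf{e}_j}(y) = -\tfrac{y^{2}}{2} + \ell_{\mathsf{e}_j}\,y + \upsilon(\mathsf{v}_0), \qquad \upsilon_{\mathsf{e}_j}(\ell_{\mathsf{e}_j})-\upsilon(\mathsf{v}_0) = \tfrac{\ell_{\mathsf{e}_j}^{2}}{2} \geq 0.
\]

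\emph{Step 2 (transplantation onto $\hat{\mathsf{e}}$).} Parametrize $\hat{\mathsf{e}}$ as $[0,L]$ with $L=\sum_j\ell_{\mathsf{e}_j}$, and set $L_j=\sum_{i\leq j}\ell_{\mathsf{e}_i}$ and $s_j=\sum_{i<j}\tfrac{\ell_{\mathsf{e}_i}^{2}}{2}\geq 0$. Define a test function $\tilde u\in H^{1}(\tilde{\mathcal{G}})$ by $\tilde u=\upsilon$ on $\tilde{\mathcal{G}}\setminus\hat{\mathsf{e}}$ and
\[
	\tilde u(x) = \upsilon_{\mathsf{e}_j}(x-L_{j-1}) + s_j \qquad \text{for } x\in[L_{j-1},L_j];
\]
the shifts $s_j$ are precisely what continuity at each junction (and at the attachment to $\mathsf{v}_0$) demands. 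A short computation then gives
\[
	\int_{\hat{\mathsf{e}}}|\tilde u'|^{2} = \sum_j\int_{\mathsf{e}_j}|\upsilon'|^{2}, \qquad \int_{\hat{\mathsf{e}}}\tilde u = \sum_j\int_{\mathsf{e}_j}\upsilon + \sum_j\ell_{\mathsf{e}_j}\,s_j.
\]
Together with $\tilde\alpha_{\hat{\mathsf{v}}}=0=\alpha'_{\mathsf{v}_j}$ and the fact that $\tilde u=\upsilon$ elsewhere, this yields $h_{\tilde\alpha}(\tilde u)=h_{\alpha'}(\upsilon)$ and $\|\tilde u\|_{L^{1}(\tilde{\mathcal{G}})}\geq\|\upsilon\|_{L^{1}(\mathcal{G})}$. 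Inserting $\tilde u$ into Theorem~\ref{thm:varchar} gives
\[
	T(\tilde{\mathcal{G}},\tilde\alpha) \geq \frac{\|\tilde u\|_{L^{1}(\tilde{\mathcal{G}})}^{2}}{h_{\tilde\alpha}(\tilde u)} \geq \frac{\|\upsilon\|_{L^{1}(\mathcal{G})}^{2}}{h_{\alpha'}(\upsilon)} = T(\mathcal{G},\alpha'),
\]
where the last equality uses that $\upsilon$ maximizes the Pólya quotient on $(\mathcal{G},\alpha')$. Combining with Step~1 finishes the proof.

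\emph{Main obstacle.} The delicate point is that the gradient energies on $\hat{\mathsf{e}}$ and on the union of the pendants must match \emph{exactly} while the shifts $s_j$ remain non-negative, so that the $L^{1}$-bound goes in the right direction. Both properties are automatic only after Step~1: with Kirchhoff conditions at the $\mathsf{v}_j$ the pendant restrictions of $\upsilon$ are monotone increasing from $\mathsf{v}_0$, so that the junction discontinuities $\upsilon(\mathsf{v}_j)-\upsilon(\mathsf{v}_0)$ are all non-negative. With general positive strengths at the $\mathsf{v}_j$ these differences can have either sign, so the naive transplantation would not in general increase the $L^{1}$-norm, making the reduction via Theorem~\ref{changingstrength} a genuine ingredient rather than a cosmetic one.
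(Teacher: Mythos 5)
Your argument is correct, but it follows a genuinely different route from the paper's. The paper stays entirely inside the variational characterization: it reduces to $r=2$ by induction, takes an \emph{arbitrary} test function $f\in H^1(\mathcal{G})$, locates the maximum of $f$ over $\mathsf{e}_1\cup\mathsf{e}_2$ on, say, $\mathsf{e}_1$, and builds $\tilde f$ on $\hat{\mathsf{e}}$ by keeping $f_{\mathsf{e}_1}$ and inserting a constant plateau of length $\ell_{\mathsf{e}_2}$ at that maximal value, discarding $f_{\mathsf{e}_2}$ altogether; the Dirichlet energy can only drop, the tip strengths $\alpha_{\mathsf{v}_j}\ge 0$ are simply dropped in the energy comparison, and the plateau guarantees the $L^1$-norm does not decrease, so the P\'olya quotients compare directly for every $f$. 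You instead first invoke Theorem~\ref{changingstrength} to set the tip strengths to zero, then exploit that with Kirchhoff--Neumann tips the torsion function is an explicit, monotone parabola on each pendant, and concatenate these restrictions with the nonnegative shifts $s_j$ to get a test function on $\hat{\mathsf{e}}$ with \emph{equal} gradient energy and larger $L^1$-norm. Your route handles all $r$ at once without induction and makes transparent exactly why the junction shifts are nonnegative (your closing remark correctly identifies why a naive transplantation fails for general tip strengths, which is precisely what the paper's plateau trick circumvents); the price is that it is less self-contained, resting on Theorem~\ref{changingstrength} (hence on the Hadamard formula and positivity of the torsion function) and on the explicit solvability on pendants, whereas the paper's argument uses only the variational characterization and arbitrary test functions. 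One point you rightly flag, and which the paper also needs implicitly, is that $\alpha$ must not be supported solely on $\{\mathsf{v}_1,\dots,\mathsf{v}_r\}$, since otherwise $\tilde\alpha\equiv 0$ and $T(\tilde{\mathcal{G}},\tilde\alpha)$ is not even defined; with that reading your two steps, $T(\mathcal{G},\alpha)\le T(\mathcal{G},\alpha')\le T(\tilde{\mathcal{G}},\tilde\alpha)$, are both sound.
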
	

We refer to Figure~\ref{fig:unfolding} for an illustration of the unfolding process.

\begin{center}
	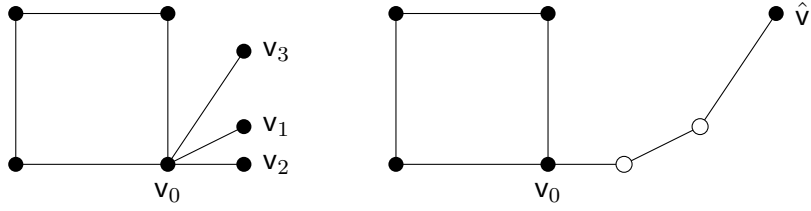
\begin{figure}
			\begin{tikzpicture}
			\begin{scope}
			\coordinate (A) at (0,0);
			\coordinate (B) at (2,0);
			\coordinate (C) at (2,2);
			\coordinate (D) at (0,2);
			\draw (A) -- (B) -- (C) -- (D) -- cycle;
			
			\coordinate (E) at (3,.5); 
			\coordinate (F) at (3,0);
			\coordinate (G) at (3,1.5);
			\draw (B) -- (E);
			\draw (B) -- (F);
			\draw (B) -- (G);

			\node[fill, circle, inner sep=2pt] at (A) {};
			\node[fill, circle, inner sep=2pt, label=below:{$\mathsf{v}_0$}] at (B) {};
			\node[fill, circle, inner sep=2pt] at (C) {};
			\node[fill, circle, inner sep=2pt] at (D) {};
			\node[fill, circle, inner sep=2pt, label=right:{$\mathsf{v}_1$}] at (E) {};
			\node[fill, circle, inner sep=2pt, label=right:{$\mathsf{v}_2$}] at (F) {};
			\node[fill, circle, inner sep=2pt, label=right:{$\mathsf{v}_3$}] at (G) {};
		\end{scope}
		\begin{scope}[xshift = 5cm]
		
		\coordinate (A) at (0,0);
		\coordinate (B) at (2,0);
		\coordinate (C) at (2,2);
		\coordinate (D) at (0,2);
		\draw (A) -- (B) -- (C) -- (D) -- cycle;
		
		\coordinate (E) at (3,0); 
		\coordinate (F) at (4,.5);
		\coordinate (G) at (5,2);		
		\draw (B) -- (E) -- (F) -- (G);
		
		\node[fill, circle, inner sep=2pt] at (A) {};
		\node[fill, circle, inner sep=2pt, label=below:{$\mathsf{v}_0$}] at (B) {};
		\node[fill, circle, inner sep=2pt] at (C) {};
		\node[fill, circle, inner sep=2pt] at (D) {};
		\draw[black, fill = white] (E) circle (3pt);
		\draw[black, fill = white] (F) circle (3pt);
		\node[fill, circle, inner sep=2pt, label=right:{$\hat{\mathsf{v}}$}] at (G) {};
	\end{scope}
	\end{tikzpicture}

\caption{Unfolding pendant edges}
\label{fig:unfolding}
\end{figure}
\end{center}
	\begin{proof}
		By induction, we may assume $r=2$. 
 Let $f \in H^{1}(\mathcal{G})$. Define $\tilde{f}$ on $\tilde{\mathcal{G}}$ as equal to $f$ on all edges except $\hat{\mathsf{e}}$. 
 	Without loss of generality, let $f$ let take its maximum on $\mathsf{e}_1 \cup \mathsf{e}_2$ in a point $x_0 \in \mathsf{e}_1$.
	Note that $x_0$ could be identical to $\mathsf{v}_0$.
 We identifty the new edge $\hat{\mathsf{e}}$ with $[0,\ell_{ \mathsf{e}_{1}}+\ell_{ \mathsf{e}_{2}}]$ and define $\tilde{f}$ as
	\begin{align*}
\tilde{f}_{\hat{\mathsf{e}}}(x)=\begin{cases}
	f_{{{\mathsf{e}}}_{1}}(x), & x\in [0,x_{0}] \\
	f_{ {\mathsf{e}}_{1}}(x_0), & x \in [x_{0},x_{0}+\ell_{ \mathsf{e}_{2}}]\\
	f_{{ \mathsf{e}}_{1}}(x-\ell_{ \mathsf{e}_{2}}) & x \in [x_{0}+\ell_{ \mathsf{e}_{2}},\ell_{ \mathsf{e}_{1}}+\ell_{ \mathsf{e}_{2}}],
\end{cases}
	\end{align*}
		where $x_{0}\in {\mathsf{e}}_{1}$ is such that $f \mid_{{\mathsf{e}}_{1}\cup {\mathsf{e}}_{2}}$ takes its maximum at $x_{0}$. 
		Then the function $\tilde{f}$ lies in the domain of the quadratic form $h_{\tilde \alpha}$ associated with $\Delta_{\tilde{\mathcal{G}}, \tilde \alpha}$. Then
		\begin{align*}
				h_{	\tilde \alpha}(\tilde{f})
				&=h_{\alpha}(f)-\int_{{\mathsf{e}}_{1}\cup {\mathsf{e}}_{2}} \lvert f'(x) \rvert^{2}\mathrm{d} x-\sum_{j=1}^{2} \alpha_{j} \lvert f(\mathsf{v}_{j}) \rvert^{2}+\int_{\hat{\mathsf{e}}} \lvert \tilde f'(x) \rvert^{2}\mathrm{d} x
				\leq h_{\alpha}(f)
			\end{align*}
		since $\int_{{\mathsf{e}}_{1}\cup {\mathsf{e}}_{2}} \lvert f'(x) \rvert^{2}\mathrm{d} x\geq \int_{\hat{\mathsf{e}}} \lvert \tilde f'(x) \rvert^{2}\mathrm{d} x$ and $\alpha_{j} \geq 0$. Moreover, 
		\begin{equation*}
				\left( \int_{\tilde{\mathcal{G}}}\lvert \tilde{f} \rvert\mathrm{d} x \right)^{2} =\left(\int_{\mathcal{G}}\lvert f \rvert\mathrm{d} x-\int_{{\mathsf{e}}_{1}\cup {\mathsf{e}}_{2}}\lvert f \rvert\mathrm{d} x+\int_{\hat{\mathsf{e}}}\lvert \tilde{f} \rvert\mathrm{d} x \right)^{2} \geq \left(\int_{\mathcal{G}} \lvert f \rvert \mathrm{d} x \right)^{2}, 
			\end{equation*}
		where the last inequality follows from
		\begin{align*}
				\int_{\hat{\mathsf{e}}}\lvert \tilde{f} \rvert\mathrm{d} x 
				&=
				\int_{0}^{x_{0}} {\lvert f_{ {\mathsf{e}}_{1}} \rvert}\mathrm{d} x+\ell_{{\mathsf{e}}_{2}} \lvert f_{ {\mathsf{e}}_{1}}(x_{0})\rvert +\int_{x_{0+\ell_{{\mathsf{e}}_{2}}}}^{\ell_{ {\mathsf{e}}_{1}}+\ell_{ {\mathsf{e}}_{2}}}
				\lvert
				f_{ {\mathsf{e}}_{1}}(x-\ell_{ {\mathsf{e}}_{2}})
				\rvert
				\mathrm{d} x
				\\
				&= 
				\int_{0}^{x_{0}}{\lvert f_{ {\mathsf{e}}_{1}} \rvert}\mathrm{d} x+\int_{x_0}^{\ell_{ {\mathsf{e}}_{1}}}{\lvert f_{ {\mathsf{e}}_{1}} \rvert}\mathrm{d} x+\ell_{{\mathsf{e}}_{2}}\lvert f_{ {\mathsf{e}}_{1}}(x_{0})\rvert \\
			&\geq \int_{{\mathsf{e}}_{1}} \lvert f_{{\mathsf{e}}_{1}} \rvert \mathrm{d} x+\int_{{\mathsf{e}}_{2}} \lvert f_{{\mathsf{e}}_{2}} \rvert.
		\end{align*}
	Comparing the  P{\'o}lya quotients in Theorem~\ref{varchar} yields the result.
 \end{proof}
In Theorem~\ref{thm:unfolding}, one might attempt to also modify the strength $\tilde{\alpha}_{\hat{\mathsf{v}}}$ at the vertex $\hat{\mathsf{v}}$ of the edge $\hat{\mathsf{e}}$ by setting $\tilde{\alpha}_{\hat{\mathsf{v}}} = \alpha_{\mathsf{v}_{1}} + \cdots + \alpha_{\mathsf{v}_{r}}$. 
However, this will in general no longer increase the the torsional rigidity as the following example demonstrates.

\begin{example}
	\label{exa:no_unfolding}
	For small $\epsilon > 0$, take a metric graph comprised of two intervals of length $\epsilon$ and $1 - \epsilon$, respectively, joined at one vertex $\mathsf{v}_0$ and ending at two vertices $\mathsf{v}_1$, and $\mathsf{v}_2$.
	Set the strength at the vertices $\mathsf{v}_0$ and $\mathsf{v}_1$ to be equal to $1$ and set the strength at $\mathsf{v}_2$ to be zero.
	By unfolding, this graph can be made into an interval of length $1$ with strengths $1$ on both end points, see Figure~\ref{fig:no_unfolding} for an illustration. 
	Its torsion function can be seen to be 
	\[
	\upsilon_{\mathrm{unfolded}}(x) = - \frac{x^2}{2} + \frac{x}{2} + \frac{1}{2}
	\]
	which has integral $\frac{7}{12}$.
	
	On the other hand, for $\epsilon \to 0$, the torsional rigidity of the original graph will converge to  torsional rigidity of an interval of length $1$ with strength $2$ on one side and strength $0$ on the other side. Indeed this can be seen by solving the torsion problem~\eqref{torrob} explicitly and noting the continuous dependence on $\epsilon$. But in this case the torsion function is
	\[
	\upsilon_{\mathrm{limit}}(x)
	=
	- \frac{x^2}{2} + 1
	\]
	which has integral $\frac{5}{6}$.
	Since $\frac{7}{12} < \frac{5}{6}$, torsional rigidities, we conclude that in this case, \emph{unfolding will decrease the torsional rigidity}.
	
\begin{figure}[ht]
		\begin{tikzpicture}[scale = 1.25]

		\begin{scope}			

			\coordinate (A) at (0,0);
			\coordinate (B) at (.5,.5);
			\coordinate (C) at (2.5,0);

			
			\draw (A) -- (B);
			\draw (A) -- (C);
			
			\node[fill, circle, inner sep=2pt, label=below:{$\mathsf{v}_0$}, label=left:{\footnotesize$\alpha_{\mathsf{v}_0} = 1$}] at (A) {};
			\node[fill, circle, inner sep=2pt, label=above:{$\mathsf{v}_1$}, label=right:{\footnotesize$\alpha_{\mathsf{v}_1} = 1$}] at (B) {};
			\node[fill, circle, inner sep=2pt, label=below:{$\mathsf{v}_2$}, label=above:{\footnotesize$\alpha_{\mathsf{v}_2} = 0$}] at (C) {};
		\end{scope}	
		
		\begin{scope}[xshift = 4.5cm]			

			\coordinate (A) at (0,0);
			\coordinate (B) at (3,.5);
			\coordinate (C) at (2.5,0);

			
			\draw (A) -- (C);
			\draw (C) -- (B);
			
			\node[fill, circle, inner sep=2pt, label=above:{$\mathsf{v}_0$}, label=below:{\footnotesize$\alpha_{\mathsf{v}_0} = 1$}] at (A) {};
			\draw[fill = white] (C) circle (2.4pt);
			\node[fill, circle, inner sep=2pt, label=above:{$\hat{\mathsf{v}}$}, label=right:{\footnotesize$\alpha_{\hat{\mathsf{v}}} = 1$}] at (B) {};
		\end{scope}

		\end{tikzpicture}
	\caption{The metric graph and from Example~\ref{exa:no_unfolding} on the left and its unfolded version on the right.}
	\label{fig:no_unfolding}
\end{figure}
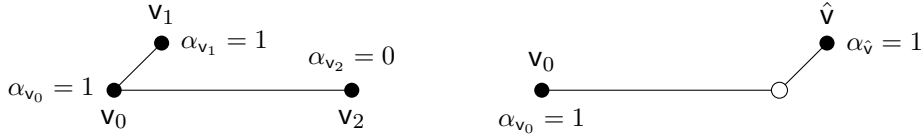		
	
\end{example}

We conclude that there seems no simple answer on how to re-arrange strengths in order to ensure monotonicity of the unfolding process.

\begin{section}{Estimates on torsional rigidity}\label{est}
\begin{subsection}{Lower bound: Flowers are minimizers}
{
In this section, we prove a lower bound on the torsional rigidity in terms of the total length, the number of edges $\lvert \mathsf{E} \rvert$, and the \emph{total strength}, defined as $\lvert \alpha \rvert_1 := \sum_{\mathsf{v} \in \mathsf{V}}\alpha_{\mathsf{v}}$.
We shall see that for fixed total length, total strength and edge number, flower graphs as in Example~\ref{exa:flower} will minimize the torsional rigidity.
}

\begin{theorem}
	Let $\mathcal{G}$ be a connected metric graph and let $\alpha \in [0, \infty)^{\mathsf{V}}$ be not identically zero.
	Then $$ T(\mathcal{G}, \alpha)\geq \frac{\lvert \mathcal{G} \rvert^{3}}{12\lvert \mathsf{E} \rvert^{2}}+\frac{\lvert \mathcal{G} \rvert^{2}}{\lvert \alpha \rvert_1}. $$ 
	In particular at fixed total length, number of edges and total strength, flower graphs as in Example~\ref{exa:flower} minimize the torsional rigidity.
\end{theorem}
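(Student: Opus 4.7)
The plan is to reduce to the flower graph case via surgery and then solve the remaining optimization problem with Jensen's inequality.

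First, I would apply the gluing vertices surgery (Theorem~\ref{gluver}) iteratively to identify all vertices of $\mathcal{G}$ into a single vertex. Since each gluing step preserves connectedness and preserves non-negativity of strengths (strengths just add), the hypotheses of Theorem~\ref{gluver} are maintained throughout. The resulting graph $\mathcal{F}$ is a flower graph with the same total length $\lvert \mathcal{G} \rvert$, the same number of edges $\lvert \mathsf{E} \rvert$, and a single vertex whose strength equals the total $\lvert \alpha \rvert_1 = \sum_{\mathsf{v}} \alpha_{\mathsf{v}}$. Iterating Theorem~\ref{gluver} yields
\[
T(\mathcal{G}, \alpha) \geq T(\mathcal{F}, \lvert \alpha \rvert_1).
\]

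Second, I would invoke the explicit formula for the torsional rigidity of a flower graph from Example~\ref{exa:flower}, namely
\[
T(\mathcal{F}, \lvert \alpha \rvert_1) = \sum_{\mathsf{e} \in \mathsf{E}} \frac{\ell_{\mathsf{e}}^{3}}{12} + \frac{\lvert \mathcal{G} \rvert^{2}}{\lvert \alpha \rvert_1}.
\]
To estimate the first term from below, I would apply Jensen's inequality (equivalently the power mean inequality) to the convex function $t \mapsto t^3$:
\[
\frac{1}{\lvert \mathsf{E} \rvert} \sum_{\mathsf{e} \in \mathsf{E}} \ell_{\mathsf{e}}^{3} \geq \left( \frac{1}{\lvert \mathsf{E} \rvert} \sum_{\mathsf{e} \in \mathsf{E}} \ell_{\mathsf{e}} \right)^{3} = \frac{\lvert \mathcal{G} \rvert^{3}}{\lvert \mathsf{E} \rvert^{3}},
\]
which rearranges to $\sum_{\mathsf{e} \in \mathsf{E}} \ell_{\mathsf{e}}^{3}/12 \geq \lvert \mathcal{G} \rvert^{3}/(12 \lvert \mathsf{E} \rvert^{2})$. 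Chaining the two inequalities gives the claim.

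Finally, to identify equilateral flowers as minimizers, I would observe that the two inequalities become equalities precisely when (i) $\mathcal{G}$ already has a single vertex (so no genuine gluing is needed) and (ii) all edge lengths coincide. Thus, for fixed $\lvert \mathcal{G} \rvert$, $\lvert \mathsf{E} \rvert$ and $\lvert \alpha \rvert_1$, the equilateral flower graph achieves the bound with equality, as verified by the closing formula in Example~\ref{exa:flower}.

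The main obstacle I anticipate is purely bookkeeping: ensuring the iterated application of Theorem~\ref{gluver} is justified at each step (connectedness and the non-degeneracy condition $\tilde \alpha \not\equiv 0$ must be preserved). Both are immediate since gluing vertices cannot disconnect a connected graph and strengths accumulate additively, so the non-zero total strength is preserved throughout the reduction.
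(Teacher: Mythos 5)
Your proposal is correct and follows essentially the same route as the paper: iterated application of the gluing theorem (Theorem~\ref{gluver}) to reduce to the flower graph with total strength $\lvert \alpha \rvert_1$, then the explicit formula of Example~\ref{exa:flower} combined with Jensen's inequality for $t \mapsto t^{3}$. The equality discussion via equilateral flowers also matches the paper's concluding remark.
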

	\begin{proof}
		Let $\mathcal{F}$ be the flower graph on $\mathsf{E}$ edges, with the same lengths as for $\mathcal{G}$, and let $\Delta_{\mathcal{F},\lvert \alpha \rvert_1 }$ be the Laplace operator on $\mathcal{F}$ with a $\delta$-vertex condition with strength $\lvert \alpha \rvert_1$ at the only vertex. 
		Clearly, $\mathcal{F}$ and $\Delta_{\mathcal{F}, \lvert \alpha \rvert_1}$ can be obtained from $\mathcal{G}$ and $\Delta_{\mathcal{G}, \alpha}$ by joining all vertices and adding their strengths. 
		Applying Theorem~\ref{gluver} repeatedly, we obtain 
		$$
		T(\mathcal{G}, \alpha)
		\geq 
		T(\mathcal{F}, \lvert \alpha \rvert_1)=\sum_{\mathsf{e} \in \mathsf{E}}\frac{\ell_{\mathsf{e}}^{3}}{12}+\frac{\lvert \mathcal{G} \rvert^{2}}{\lvert \alpha \rvert_1}
		$$ 
		by Example~\ref{exa:flower}. 
		By Jensen's inequality, we deduce $ T(\mathcal{G}, \alpha)\geq \frac{\lvert \mathcal{G} \rvert^{3}}{12\lvert \mathsf{E} \rvert^{2}}+\frac{\lvert \mathcal{G} \rvert^{2}}{\lvert \alpha \rvert_1}$. 	
		On the other hand, any graph with the same number of edges, total length, and total strength as $\mathcal{G}$ can be converted to a flower graph by gluing all its vertices. 
	\end{proof}

\end{subsection}
\begin{subsection}{Upper bounds: Saint-Venant type inequality }
In this subsection, we prove upper bounds on the torsional rigidity. The graph realizing the upper bound will be an interval graph with strength $0$ at one vertex and the sum of all strengths concentrate at the other vertex. 
\begin{theorem}\label{SaintVenant}
	Let $\mathcal{G}$ be a connected metric graph and let $\alpha \in [0, \infty)^{\mathsf{V}}$ be not identically zero.
	Then 
	\begin{equation}\label{SV1}
		T(\mathcal{G}, \alpha)
		\leq 
		T \left({\mathcal{J}}, \{0, \lvert \alpha \rvert_1 \}
		\right)
		=
		\frac{\lvert \mathcal{G} \rvert^{3}}{3}+\frac{\lvert \mathcal{G} \rvert^{2}}{\lvert \alpha \rvert_1},
	\end{equation}
	where $\mathcal{J}$ is the path graph of length $\lvert \mathcal{G} \rvert$ with a $\delta$-vertex condition of strength $\lvert \alpha \rvert_1$ at one vertex and a Neumann condition at the other one. 
	Equality holds if and only if $\mathcal{G}=\mathcal{J}$ with the described vertex conditions.
		\end{theorem}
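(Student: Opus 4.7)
The plan is to apply the variational characterization from Theorem~\ref{thm:varchar} on the target graph $\mathcal{J}$, using a monotone rearrangement of the torsion function $\upsilon$ of $\mathcal{G}$ as a test function. First I would record the identity $h_{\alpha}(\upsilon)=\lVert\upsilon\rVert_{L^{1}(\mathcal{G})}=T(\mathcal{G},\alpha)$, which follows from $-\upsilon''\equiv 1$, integration by parts on each edge, and the $\delta$-vertex conditions. This reduces the task to exhibiting a test function on $\mathcal{J}$ whose P\'olya quotient is at least $T(\mathcal{G},\alpha)$.

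Next I would introduce the monotone decreasing rearrangement $\upsilon^{\ast}\colon[0,\lvert\mathcal{G}\rvert]\to(0,\infty)$; it satisfies $\lVert\upsilon^{\ast}\rVert_{L^{1}}=\lVert\upsilon\rVert_{L^{1}}$ and $\upsilon^{\ast}(\lvert\mathcal{G}\rvert)=\min_{\mathcal{G}}\upsilon$. Identifying $\mathcal{J}$ with $[0,\lvert\mathcal{G}\rvert]$ and placing the strength $\lvert\alpha\rvert_1$ at the right endpoint, the theorem will follow once I have
\[
\int_{0}^{\lvert\mathcal{G}\rvert}(\upsilon^{\ast\prime})^{2}\,ds+\lvert\alpha\rvert_1\,\upsilon^{\ast}(\lvert\mathcal{G}\rvert)^{2} \leq \int_{\mathcal{G}}(\upsilon')^{2}\,dx+\sum_{\mathsf{v}\in\mathsf{V}}\alpha_{\mathsf{v}}\,\upsilon(\mathsf{v})^{2} = h_{\alpha}(\upsilon),
\]
since then Theorem~\ref{thm:varchar} applied on $\mathcal{J}$ to the test function $\upsilon^{\ast}$ yields $T(\mathcal{J},\{0,\lvert\alpha\rvert_1\})\geq T(\mathcal{G},\alpha)^{2}/T(\mathcal{G},\alpha)=T(\mathcal{G},\alpha)$, and the explicit value of the right-hand side has already been computed in Example~\ref{exa:interval_graph}.

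The $\delta$-contribution is immediate: since $\upsilon>0$ and $\upsilon^{\ast}(\lvert\mathcal{G}\rvert)=\min_{\mathcal{G}}\upsilon\leq\upsilon(\mathsf{v})$ for every $\mathsf{v}\in\mathsf{V}$, I can write $\lvert\alpha\rvert_1\upsilon^{\ast}(\lvert\mathcal{G}\rvert)^{2}=\sum_{\mathsf{v}}\alpha_{\mathsf{v}}\upsilon^{\ast}(\lvert\mathcal{G}\rvert)^{2}\leq\sum_{\mathsf{v}}\alpha_{\mathsf{v}}\upsilon(\mathsf{v})^{2}$. The main obstacle is the Dirichlet-energy bound $\int_{0}^{\lvert\mathcal{G}\rvert}(\upsilon^{\ast\prime})^{2}\,ds\leq\int_{\mathcal{G}}(\upsilon')^{2}\,dx$, a P\'olya--Szeg\H{o}-type inequality on metric graphs. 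I would establish it via the coarea formula: with $\mu(t)=\lvert\{\upsilon>t\}\rvert$ and $\upsilon^{\ast}=\mu^{-1}$, a change of variables yields
\[
\int_{\mathcal{G}}(\upsilon')^{2}\,dx=\int_{0}^{\infty}\sum_{\upsilon(x)=t}\lvert\upsilon'(x)\rvert\,dt, \qquad \int_{0}^{\lvert\mathcal{G}\rvert}(\upsilon^{\ast\prime})^{2}\,ds=\int_{0}^{\infty}\Bigl(\sum_{\upsilon(x)=t}\lvert\upsilon'(x)\rvert^{-1}\Bigr)^{-1}dt,
\]
and the pointwise Cauchy--Schwarz inequality $(\sum a_{i})(\sum a_{i}^{-1})\geq N^{2}\geq 1$ closes the loop. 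Critical values of $\upsilon$ form a finite set since $\upsilon$ is piecewise quadratic, so no issue arises in the coarea step.

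For the equality case, tracing back, equality in the P\'olya--Szeg\H{o} step forces $\{\upsilon=t\}$ to be a singleton for a.e.\ $t$, so $\mathcal{G}$ must be a path graph on which $\upsilon$ is monotone; equality in the $\delta$-term forces $\upsilon(\mathsf{v})=\min_{\mathcal{G}}\upsilon$ at every $\mathsf{v}$ with $\alpha_{\mathsf{v}}>0$, i.e.\ the entire total strength sits at the endpoint where $\upsilon$ attains its minimum. Together these conditions identify $\mathcal{G}$ with $\mathcal{J}$ equipped with exactly the described vertex conditions.
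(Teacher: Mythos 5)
Your proposal is correct and follows essentially the same route as the paper: it uses the P\'olya-quotient characterization with a monotone rearrangement $\upsilon^{*}$ of the torsion function as test function on the interval, bounds the Dirichlet energy via the coarea formula and Cauchy--Schwarz, handles the strength term by comparing with the minimum of $\upsilon$, and traces equality back through the level-set count. The only cosmetic differences are that you rearrange decreasingly (strength at the right endpoint) rather than non-decreasingly, and you fold the strength-concentration step directly into the estimate instead of first passing to the auxiliary strengths $\hat{\alpha}$ on $\mathcal{G}$.
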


	\begin{proof}
		Let $\upsilon$ be the torsion function for the Laplace operator $\Delta_{\mathcal{G},\alpha}$. Possibly inserting additional vertices, we may assume that $\upsilon$ takes its maximum and minimum at vertices $\mathsf{x}_{\min}$ and $\mathsf{x}_{\max}$, respectively. 
		Let $\hat{\alpha}_{\mathsf{x}_{\min}}= \lvert \alpha \rvert_1$ and $\hat{\alpha}_{\mathsf{v}}=0$ for all $\mathsf{v}\in \mathsf{V} \cup \{\mathsf{x}_{\max} \}$.
		Then $\upsilon \in H^1(\mathcal{G})$, and
		\begin{eqnarray*}
			T(\mathcal{G},\alpha)&=& \frac{ \left( \int_{\mathcal{G}}\upsilon(x)\mathrm{d} x\right)^{2} }{\int_{\mathcal{G}}\lvert \upsilon'(x) \rvert^{2}\mathrm{d} x+\sum_{\mathsf{v} \in \mathsf{V}}\alpha_{\mathsf{v}}\lvert \upsilon(\mathsf{v}) \rvert^{2}}\\
			&\leq&\frac{\left( \int_{\mathcal{G}}\upsilon(x)\mathrm{d} x\right)^{2} }{\int_{\mathcal{G}}\lvert \upsilon'(x) \rvert^{2}\mathrm{d} x+\lvert \alpha \rvert_1 \cdot \lvert \upsilon(\mathsf{x}_{\min}) \rvert^{2}}	\leq T(\mathcal{G},\hat{\alpha}).
		\end{eqnarray*}
		Using the same symmetrization technique as in~\cite{karr}, we construct a continuous, non-decreasing rearrangement $\upsilon^{*}$ on $[0,\lvert \mathcal{G} \rvert]$ of $\upsilon$ via
		\begin{equation*}
			\upsilon^{*}(0)=\upsilon(\mathsf{x}_{\min}), \ \  \upsilon^{*}(\lvert \mathcal{G} \rvert)=\upsilon(\mathsf{x}_{\max}),  
		\end{equation*}
		and 
		\[
		\mu_{u}(t)
		:= 
		\lvert 
			\{ x\in \mathcal{G}: \upsilon(x)<t\} 
		\rvert
		=
		\lvert 
			\{s\in {[0, \lvert \mathcal{G} \rvert ] }: \upsilon^{*}(s)<t\} .
		\rvert 
		\]
		By construction $\upsilon^{*}$  satisfies
		\begin{equation}\label{eqnnorms}
			\lVert \upsilon \rVert_{L^{p}(\mathcal{G})}^{p}=  \lVert \upsilon^{*}  \rVert_{L^{p}(0,\lvert \mathcal{G} \rvert)}^{p}.
		\end{equation}
		Let $n(t)$ denote the number of preimages of $t$ under $\upsilon$. Since $\upsilon$ is continuous $n(t)\geq 1,$ and $n(t)$ is a finite number, because $\upsilon$ satisfies $-\upsilon_{\mathsf{e}}^{''}=1$ on each interval. 
		Using the coarea formula and Cauchy-Schwarz inequality, we have
		\begin{equation}
			\int_{\mathcal{G}}\lvert \upsilon'(x) \rvert^{2}\mathrm{d} x
			=
			\int_{\upsilon(\mathsf{x}_{\min})}^{\upsilon(\mathsf{x}_{\max})} \sum_{x:\upsilon(x)=t} \lvert \upsilon'(x) \rvert\mathrm{d} t
			\geq 
			\int_{\upsilon(\mathsf{x}_{\min})}^{\upsilon(\mathsf{x}_{\max})} \frac{n(t)^2}{\mu'(t)}\mathrm{d} t.
		\end{equation}
		and 
		\begin{equation}\label{forcor}
			\int_{0}^{\lvert \mathcal{G} \rvert}\lvert \upsilon^{*'}(x)\rvert^{2}\mathrm{d} x
			=
			\int_{\upsilon(\mathsf{x}_{\min})}^{\upsilon(\mathsf{x}_{\max})} 
			\sum_{x:\upsilon^{*}(x)=t} \lvert \upsilon^{*'}(x)\rvert\mathrm{d} t
			=
			\int_{\upsilon(\mathsf{x}_{\min})}^{\upsilon(\mathsf{x}_{\max})} 
			\frac{1}{\mu'(t)}\mathrm{d} t.
		\end{equation}
		Thus, we conclude \begin{equation*}
			 \lVert \upsilon'  \rVert_{L^{2}(\mathcal{G})}^{2}
			\geq
			 \lVert \upsilon^{*'}  \rVert_{L^{2}(0,\lvert \mathcal{G} \rvert)}^{2}, 
		\end{equation*}
		with equality if and only if $n \equiv 1$. 
		It follows that the P{\'o}lya quotients satisfy 
		\begin{eqnarray*}
			T(\mathcal{G},\alpha)&\leq&\frac{ \lVert \upsilon  \rVert_{L^{1}(\mathcal{G})}^{2}}{\int_{\mathcal{G}}\lvert \upsilon'(x) \rvert^{2}+\lvert\alpha\rvert_1 \lvert \upsilon(\mathsf{x}_{\min}) \rvert^{2}}\\
			&\leq& \frac{ \lVert \upsilon^{*} \rVert_{L^{2}(0,\lvert \mathcal{G} \rvert)}^{2}}{ \lVert \upsilon^{*'} \rVert_{L^{2}(0,\lvert \mathcal{G} \rvert)}^{2}+\lvert\alpha\rvert_1 \cdot \lvert \upsilon^{*}(0) \rvert^{2}}
			 \leq  T(\mathcal{J}, \{0, \lvert \alpha \rvert_1 \}),
		\end{eqnarray*}
		with equality if and only if $\mathcal{G}={\mathcal{J}}$. 
		Indeed, if $	T(\mathcal{G},\alpha)= T({\mathcal{J}}, \{0, \lvert \alpha \rvert_1 \})$, then all inequalities must be identities.
		In particular, $n \equiv 1$ whence $\mathcal{G}$ is a path graph.
	\end{proof}

{
\begin{remark}
	In the case of graphs with no $\delta$-vertex conditions, but exactly one Dirichlet vertex, one can give a substantially shortened proof of the first half of the statement of Theorem~\ref{SaintVenant} via surgery by successive applications of cutting and unfolding.
	However, this would not provide a classification of maximizers whence in previous works, authors have resorted to more complicated rearrangement techniques, cf.~\cite[Theorem~4.6]{pluem}.
	In our situation of $\delta$-vertex conditions, proofs via surgery seems inaccessible because unfolding no longer holds as demonstrated in Example~\ref{exa:no_unfolding}.
\end{remark}
}

We will now refine the bound~\eqref{SV1} for a special class of metric graphs: doubly connected graphs. 
\begin{definition}
A metric graph $\mathcal{G}$ is \emph{doubly connected} if for every $x \neq y \in \mathcal{G}$, there are two edge-disjoint paths joining $x$ and $y$.
\end{definition}

Let $\mathcal{G}$ be a metric graph and $\upsilon$ be its torsion function. For $t \in [0, \lVert \upsilon \rVert_{\infty}]$, let $n(t)$ denote the number of preimages of $t$ under $\upsilon$. 
In particular, if $\mathcal{G}$ is doubly connected, then $n(t)\geq 2$, cf~\cite{berk2}. 

\begin{corollary}
	\label{cor:doubly_connected}
	In the situation of Theorem~\ref{SaintVenant} let additionaly $\mathcal{G}$ be doubly connected. 
	Then
	\begin{equation}
		T(\mathcal{G},\alpha)\leq 2 \  T\left( \hat{\mathcal{J}}, \left\lbrace  \frac{\lvert\alpha\rvert_{1}}{2},0\right\rbrace  \right)=\frac{\lvert \mathcal{G} \rvert^{3}}{24}+\frac{\lvert \mathcal{G} \rvert^{2}}{2\lvert\alpha\rvert_{1}}, 
	\end{equation}
	where $\hat{\mathcal{J}}$ is a path graph of length $\frac{\lvert \mathcal{G} \rvert}{2}$.
\end{corollary}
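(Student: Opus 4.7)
The plan is to refine the symmetrization argument from the proof of Theorem~\ref{SaintVenant} by exploiting that double connectivity ensures $n(t) \geq 2$ for all level sets of the torsion function, cf.~\cite{berk2}. As in the proof of Theorem~\ref{SaintVenant}, I would first assume, after inserting auxiliary degree-two vertices if necessary, that the torsion function $\upsilon$ attains its minimum at a vertex $\mathsf{x}_{\min}$, and then concentrate the total strength $\lvert \alpha \rvert_1$ at that vertex, which only increases the P\'olya quotient from Theorem~\ref{thm:varchar}.

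Next, rather than rearranging $\upsilon$ onto an interval of length $\lvert \mathcal{G} \rvert$ as in Theorem~\ref{SaintVenant}, I would define a non-decreasing rearrangement $\hat{\upsilon}^{*}$ on $[0, \lvert \mathcal{G} \rvert /2]$ by $\hat{\upsilon}^{*}(s) := \upsilon^{*}(2s)$, where $\upsilon^{*}$ is the usual equimeasurable rearrangement on $[0, \lvert \mathcal{G} \rvert]$. A direct substitution then yields
\begin{equation*}
\lVert \hat{\upsilon}^{*} \rVert_{L^1(0, \lvert \mathcal{G} \rvert /2)} = \tfrac{1}{2} \lVert \upsilon \rVert_{L^1(\mathcal{G})}, \quad \lVert \hat{\upsilon}^{*\prime} \rVert_{L^2(0, \lvert \mathcal{G} \rvert /2)}^{2} = 2 \lVert \upsilon^{*\prime} \rVert_{L^2(0, \lvert \mathcal{G} \rvert)}^{2}, \quad \hat{\upsilon}^{*}(0) = \upsilon(\mathsf{x}_{\min}).
\end{equation*}

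The central step is the improved gradient comparison. Starting from the Cauchy--Schwarz manipulation~\eqref{forcor} in the proof of Theorem~\ref{SaintVenant}, which gives $\lVert \upsilon' \rVert_{L^2(\mathcal{G})}^2 \geq \int n(t)^2 / \mu'(t) \, \mathrm{d} t$, I would plug in the bound $n(t) \geq 2$ available for doubly connected graphs to obtain $\lVert \upsilon' \rVert_{L^2(\mathcal{G})}^2 \geq 4 \lVert \upsilon^{*\prime} \rVert_{L^2(0, \lvert \mathcal{G} \rvert)}^2 = 2 \lVert \hat{\upsilon}^{*\prime} \rVert_{L^2(0, \lvert \mathcal{G} \rvert /2)}^2$. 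Substituting all four relations into the P\'olya quotient and factoring out a common factor of $2$, we arrive at
\[
T(\mathcal{G}, \alpha) \leq \frac{4 \lVert \hat{\upsilon}^{*} \rVert_{L^1}^2}{2 \lVert \hat{\upsilon}^{*\prime} \rVert_{L^2}^2 + \lvert \alpha \rvert_1 \lvert \hat{\upsilon}^{*}(0) \rvert^2} = 2 \cdot \frac{\lVert \hat{\upsilon}^{*} \rVert_{L^1}^2}{\lVert \hat{\upsilon}^{*\prime} \rVert_{L^2}^2 + \tfrac{\lvert \alpha \rvert_1}{2} \lvert \hat{\upsilon}^{*}(0) \rvert^2} \leq 2 \, T \bigl( \hat{\mathcal{J}}, \{ \tfrac{\lvert \alpha \rvert_1}{2}, 0 \} \bigr),
\]
where the last inequality recognizes the quotient as the P\'olya quotient from Theorem~\ref{thm:varchar} on the path graph $\hat{\mathcal{J}}$ of length $\lvert \mathcal{G} \rvert /2$ with $\delta$-strength $\lvert \alpha \rvert_1 /2$ at one endpoint and a Neumann condition at the other. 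The explicit closed form then follows by plugging into the formula from Example~\ref{exa:interval_graph}.

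The main obstacle is the careful bookkeeping of factors of two, and in particular the observation that after rescaling the rearrangement to an interval of half length, the effective strength on the comparison interval becomes $\lvert \alpha \rvert_1 / 2$ rather than the naive $\lvert \alpha \rvert_1$, which is essential in order to match the P\'olya quotient on $\hat{\mathcal{J}}$. No substantially new ideas beyond those in the proof of Theorem~\ref{SaintVenant} are required; the hypothesis of double connectivity enters only through the strengthened inequality $n(t) \geq 2$.
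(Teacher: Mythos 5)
Your proposal is correct and follows essentially the same route as the paper: concentrate the total strength at $\mathsf{x}_{\min}$, rescale the equimeasurable rearrangement to $\hat{\upsilon}^{*}(s)=\upsilon^{*}(2s)$ on the half-length interval, use $n(t)\geq 2$ in the coarea/Cauchy--Schwarz step to gain the factor $4$ in the gradient comparison, and conclude by comparing P\'olya quotients on $\hat{\mathcal{J}}$ with strength $\lvert\alpha\rvert_{1}/2$. If anything, your bookkeeping of the inequality directions is stated more carefully than in the paper's displayed chain, but the underlying argument is the same.
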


Note that the Laplacian on $\left[ 0,\frac{\lvert \mathcal{G} \rvert}{2}\right] $ with a $\delta$-vertex condition of strength $\lvert\alpha\rvert_{1}/2$ at one end point has half the torsional rigidity of a circle graph of length $\lvert \mathcal{G} \rvert$ with a $\delta$-vertex condition of stength $\alpha$ at one point.	
Thus, one can also interpret Corollary~\ref{cor:doubly_connected} as stating that circle graphs with all delta parameters concentrated at one vertex maximize torsional rigidity among all doubly-connected graphs.
This is a graph analogon of a classic result in~\cite{pol3}, asserting that on non simply connected domains, the torsional rigidity is maximized on annuli.	
	
	\begin{proof}
	Since $\mathcal{G}$ is  doubly connected, we have	$n(t)\geq 2. $
		We let 
		\begin{equation*}
			\hat{\upsilon^{*}}(s)=\upsilon^{*}(2 \lvert s \rvert), \ \ s \in \hat{\mathcal{J}} .
		\end{equation*}
		From equations~\eqref{eqnnorms} and~\eqref{forcor}, we easily deduce
		\begin{equation*}
			\left( 	\int_{0}^{\frac{\lvert \mathcal{G} \rvert}{2}}\hat{\upsilon^{*}}(x)\mathrm{d} x\right)^{2} 
			=\left( \frac{1}{2}\int_{0}^{\lvert \mathcal{G} \rvert} \upsilon^{*}(x) \mathrm{d} x\right)^{2}=\left( \frac{1}{2}\int_{\mathcal{G}} \upsilon(x) \mathrm{d} x\right)^{2},
		\end{equation*}
		\begin{equation*}
			\int_{0}^{\frac{\lvert \mathcal{G} \rvert}{2}}\lvert \hat{\upsilon^{*}}^{'}(x) \rvert^{2}\mathrm{d} x
			=2\int_{0}^{\lvert \mathcal{G} \rvert} \lvert \upsilon^{*'}(x)\rvert^{2} \mathrm{d} x\leq\frac{1}{2}\int_{\mathcal{G}} \lvert \upsilon^{'}(x) \rvert^{2} \mathrm{d} x,
		\end{equation*}
		and
		\begin{align*}
		 T\left( \hat{\mathcal{J}}, \left\lbrace  \frac{\lvert\alpha\rvert_{1}}{2},0\right\rbrace  \right)
		 & =
		 \frac{4\left(	\int_{0}^{\frac{\lvert \mathcal{G} \rvert}{2}}\hat{\upsilon^{*}}(x)\mathrm{d} x\right)^{2} }{4\left(\int_{0}^{\frac{\lvert \mathcal{G} \rvert}{2}}\lvert \hat{\upsilon^{*}}^{'}(x) \rvert^{2}\mathrm{d} x+\frac{\alpha}{2}\hat{\upsilon^{*}}(\mathsf{x}_{\min})^{2}\right) }
		 \\
		 & \leq \frac{\left(\int_{\mathcal{G}} \upsilon(x) \mathrm{d} x \right)^{2} }{2\left( \int_{\mathcal{G}} \lvert \upsilon^{'}(x) \rvert^{2} \mathrm{d} x+\alpha\upsilon(\mathsf{x}_{min})^{2}\right) } =  \frac{T(\mathcal{G},\alpha)}{2}.\qedhere
		\end{align*}
	\end{proof}

\end{subsection}

Finally, we shift our perspective to combine torsional rigidity with the spectrum and and prove an upper bound in terms of torsional rigidity and the lowest eigenvalue $\lambda_1(\mathcal{G}, \alpha)$ of $\Delta_{\mathcal{G}, \alpha}$.

\begin{theorem}\label{polya-szego}
		Let $\mathcal{G}$ be a connected metric graph and let $\alpha \in [0, \infty)^{\mathsf{V}}$ be not identically zero.
	Then 
\begin{equation}
\lambda_{1}(\mathcal{G}, \alpha) T(\mathcal{G}, \alpha)<\lvert \mathcal{G} \rvert.
\end{equation}
\end{theorem}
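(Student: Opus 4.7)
The plan is to combine the two variational principles we have at our disposal: the Rayleigh quotient characterisation of $\lambda_1(\mathcal{G},\alpha)$ and the P\'olya quotient characterisation of $T(\mathcal{G},\alpha)$ from Theorem~\ref{thm:varchar}. The torsion function $\upsilon$ lies in $H^1(\mathcal{G})$ and is an admissible test function for both variational problems, so I would first plug it into each.

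More precisely, from Theorem~\ref{thm:varchar} we know that $\upsilon$ actually \emph{attains} the supremum in the P\'olya quotient, giving
\[
T(\mathcal{G},\alpha) = \frac{\lVert \upsilon \rVert_{L^1(\mathcal{G})}^2}{h_\alpha(\upsilon)}.
\]
On the other hand, the Rayleigh quotient characterisation~\eqref{firsteigenvalue} yields
\[
\lambda_1(\mathcal{G},\alpha) \leq \frac{h_\alpha(\upsilon)}{\lVert \upsilon \rVert_{L^2(\mathcal{G})}^2}.
\]
Multiplying these two relations, the form $h_\alpha(\upsilon)$ cancels, leaving
\[
\lambda_1(\mathcal{G},\alpha)\,T(\mathcal{G},\alpha) \leq \frac{\lVert \upsilon \rVert_{L^1(\mathcal{G})}^2}{\lVert \upsilon \rVert_{L^2(\mathcal{G})}^2}.
\]

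The final step is the Cauchy--Schwarz inequality applied to $\upsilon$ and the constant function $1$ on $\mathcal{G}$, which gives $\lVert \upsilon \rVert_{L^1(\mathcal{G})}^2 \leq \lvert \mathcal{G}\rvert \cdot \lVert \upsilon \rVert_{L^2(\mathcal{G})}^2$, producing the desired bound $\lambda_1 T \leq \lvert \mathcal{G}\rvert$.

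To upgrade to \emph{strict} inequality, note that equality in Cauchy--Schwarz would require $\upsilon$ to be constant on $\mathcal{G}$. However, $\upsilon$ satisfies $-\upsilon''\equiv 1$ on each edge and is therefore strictly concave on every edge, hence nowhere locally constant. Thus the Cauchy--Schwarz step is strict, and the inequality $\lambda_1(\mathcal{G},\alpha)\,T(\mathcal{G},\alpha) < \lvert \mathcal{G}\rvert$ follows. No real obstacle is expected here; the only subtle point is to notice that strictness is already forced at the Cauchy--Schwarz step via the edgewise ODE, without needing to argue at the Rayleigh-quotient step (though there too, $\upsilon$ is not an eigenfunction unless it is constant, giving an alternative route to strictness).
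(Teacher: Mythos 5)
Your proof is correct and is essentially the paper's own argument: the paper likewise writes $T(\mathcal{G},\alpha)=\lVert \upsilon\rVert_{L^1}^2/h_\alpha(\upsilon)\leq \lvert\mathcal{G}\rvert\,\lVert \upsilon\rVert_{L^2}^2/h_\alpha(\upsilon)\leq \lvert\mathcal{G}\rvert/\lambda_1(\mathcal{G},\alpha)$ and obtains strictness from sharpness of the Cauchy--Schwarz step, exactly as you do via the edgewise strict concavity of $\upsilon$.
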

\begin{proof}
	The proof is analogous to \cite[Section 5.4]{vander} and \cite[Proposition 5.1]{pluem} using the estimate
	\[
	T(\mathcal{G}, \alpha)
	=
	\frac
	{\lVert \upsilon \rVert_{L^1(\mathcal{G})}^2}
	{h_{\alpha}(\upsilon)}
	\leq
	\frac
	{\lvert \mathcal{G} \rvert \cdot \lVert \upsilon \rVert_{L^2(\mathcal{G})}^2}
	{h_{\alpha}(\upsilon)}
	\leq
	\frac{\lvert \mathcal{G} \rvert}{\lambda_1(\mathcal{G}, \alpha)}
	\]
	and sharpness of the Cauchy-Schwarz inequality used therein.
\end{proof}

\begin{subsection}{Discussion on a Kohler-Jobin type inequality}\label{Koh-job}

	A significant achievement in the interplay of torsional rigidity with spectral invariants of two-dimensional planar domains was a precise lower bound on the product of torsional rigidity and the square of the ground-state energy by Kohler-Jobin~\cite{koh}. 
	This bound, initially proposed by P{\'o}lya and Szeg\H{o} in~\cite{vander}, firmly solidified the importance of torsional rigidity in spectral geometry and shape optimization. 
	In~\cite{pluem}, an estimate similar to that of Kohler-Jobin within the context of metric graphs is proven. The proof obtained for the Dirichlet condition in~\cite{pluem} could be deduced in a similar manner to the ideas presented in~\cite{bras,koh}.  The inequality provides the following uniform lower bound:
	\begin{equation}\label{Koh-Job}
		\lambda_{1}(\mathcal{G})T(\mathcal{G})^{2/3}\geq \left( \frac{\pi}{\sqrt[3]{24}}\right)^{2}.
	\end{equation}
	To obtain a Kohler-Jobin inequality for metric graphs with $\delta$-vertex conditions, it is natural to look at interval graphs. Indeed, one of the main steps in the proof is reducing this multiplication $\lambda_{1}(\mathcal{G})T(\mathcal{G})^{2/3}$ from an arbitrary quantum graph $\mathcal{G}$ to an interval graph $\mathcal{J}$.  The uniform lower bound in~\eqref{Koh-Job} is  $\lambda_{1}(\mathcal{J})T(\mathcal{J})^{2/3}.$	However, when attempting to estimate a similar inequality for $\delta$-vertex conditions, the first eigenvalue of an interval graph is not explicit, only given as a root of transcendental equation. This is why, initially, it is not clear how a Kohler-Jobin inequality looks like in the context of $\delta$-vertex conditions. In such a situation, it might make sense to look at lower bounds for the first eigenvalue.
	A lower bound on the first Neumann eigenvalue on intervals with $\delta$-vertex conditions, known to be asymptotically sharp, is  $\lambda_{1}(\mathcal{J})\geq \frac{\pi^{2}\alpha}{\lvert \mathcal{J} \rvert (\pi^{2}+4\alpha \lvert \mathcal{J} \rvert)}$, where $\mathcal{J}$ is an interval graph having Neumann condition at one end point and $\alpha$ strength at the other end point (see \cite[Proposition A1]{ken2}). The product of this lower bound and $T(\mathcal{J})^{2/3}$ is
	 \begin{equation}\label{lowest}
\frac{\pi^{2}\alpha}{\lvert \mathcal{J} \rvert (\pi^{2}+4\alpha \lvert \mathcal{J} \rvert)}\left( \frac{\lvert \mathcal{J} \rvert^3}{3}+\frac{\lvert \mathcal{J} \rvert^{2}}{\alpha} \right)^{2/3}.
	\end{equation}
	This is asymptotically compatible to the case of pure Dirichlet and Kirchhoff-Neumann vertices in \cite[Theorem 5.8]{pluem}.
	\begin{lemma} 
		As $\alpha\to\infty$, the term in~\eqref{lowest} converges to that of~\eqref{Koh-Job}
$$\lim_{\alpha\to\infty} \frac{\pi^{2}\alpha}{\lvert \mathcal{J} \rvert(\pi^{2}+4\alpha\lvert \mathcal{J} \rvert)}\left( \frac{\lvert \mathcal{J} \rvert^3}{3}+\frac{\lvert \mathcal{J} \rvert^{2}}{\alpha} \right)^{2/3}=\left( \frac{\pi}{\sqrt[3]{24}}\right)^{2}.$$
\begin{proof}
Using~\eqref{lowest}, we have
\begin{align*}
	&\lim_{\alpha\to\infty} \frac{\pi^{2}\alpha}{\lvert \mathcal{J} \rvert(\pi^{2}+4\alpha\lvert \mathcal{J} \rvert)}\left( \frac{\lvert \mathcal{J} \rvert^3}{3}+\frac{\lvert \mathcal{J} \rvert^{2}}{\alpha} \right)^{2/3} 
	\\
	= 
	&\lim_{\alpha\to\infty}\frac{\pi^{2}\alpha}{\lvert \mathcal{J} \rvert(\pi^{2}+4\alpha\lvert \mathcal{J} \rvert)} \left( \frac{\alpha \lvert \mathcal{J} \rvert^{3}+3\lvert \mathcal{J} \rvert^{2}}{3\alpha}\right)^{2/3} 
	=
	\left( \frac{\pi}{\sqrt[3]{24}}\right)^{2}.\qedhere
\end{align*}
\end{proof}
	\end{lemma}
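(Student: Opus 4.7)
The plan is to compute the limit by treating the two factors separately and then combining them via continuity of multiplication and of the power map $x\mapsto x^{2/3}$. First I would rewrite the prefactor as
\[
\frac{\pi^{2}\alpha}{\lvert \mathcal{J} \rvert(\pi^{2}+4\alpha\lvert \mathcal{J} \rvert)}
=
\frac{\pi^{2}}{\lvert \mathcal{J} \rvert \left( \tfrac{\pi^{2}}{\alpha}+4\lvert \mathcal{J} \rvert \right)},
\]
from which it is immediate that as $\alpha\to\infty$ this factor tends to $\pi^{2}/(4\lvert \mathcal{J} \rvert^{2})$.

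Next I would handle the bracket, noting that $\frac{\lvert \mathcal{J} \rvert^{3}}{3}+\frac{\lvert \mathcal{J} \rvert^{2}}{\alpha}\to \frac{\lvert \mathcal{J} \rvert^{3}}{3}$, so by continuity of $x\mapsto x^{2/3}$ on $(0,\infty)$ we get
\[
\left( \frac{\lvert \mathcal{J} \rvert^3}{3}+\frac{\lvert \mathcal{J} \rvert^{2}}{\alpha} \right)^{2/3}
\;\longrightarrow\;
\left( \frac{\lvert \mathcal{J} \rvert^{3}}{3}\right)^{2/3}
=
\frac{\lvert \mathcal{J} \rvert^{2}}{3^{2/3}}.
\]
Multiplying the two limits yields
\[
\frac{\pi^{2}}{4\lvert \mathcal{J} \rvert^{2}}\cdot \frac{\lvert \mathcal{J} \rvert^{2}}{3^{2/3}}
=
\frac{\pi^{2}}{4\cdot 3^{2/3}}.
\]

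The last step is the purely arithmetic identification with the claimed right-hand side. Writing $24=8\cdot 3$, one has $24^{2/3}=8^{2/3}\cdot 3^{2/3}=4\cdot 3^{2/3}$, so
\[
\frac{\pi^{2}}{4\cdot 3^{2/3}}
=
\frac{\pi^{2}}{24^{2/3}}
=
\left( \frac{\pi}{\sqrt[3]{24}}\right)^{2},
\]
which completes the argument. There is no real obstacle here; the only thing to be careful about is keeping the $\lvert \mathcal{J}\rvert$ powers straight when merging the two factors and recognizing $24^{2/3}=4\cdot 3^{2/3}$ at the end.
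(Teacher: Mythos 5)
Your proposal is correct and follows essentially the same route as the paper: a direct elementary limit computation, with the paper merely condensing the algebra into a single rewritten expression while you evaluate the two factors separately and combine them. The arithmetic identification $24^{2/3}=4\cdot 3^{2/3}$ and the resulting value $\pi^{2}/24^{2/3}=\left(\pi/\sqrt[3]{24}\right)^{2}$ are exactly what the paper's one-line calculation yields.
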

	 At this point, there are two possible paths to take: the first one is to constrain the length of the interval graph. As long as $\lvert \mathcal{J} \rvert\geq 1$, the quantity $\lvert \mathcal{J} \rvert$ can be simplified and a uniform lower bound (depending on $\alpha$) can be obtained. 
	 \begin{lemma} Let $\mathcal{J}$ be an interval graph having Neumann condition at one end point and strength $\alpha>0$  at the other end point of length $\lvert \mathcal{J} \rvert\geq 1$, then\begin{equation}
\lambda_{1}(\mathcal{J},\{0, \alpha\})T(\mathcal{J})^{2/3}\geq \frac{\pi^{2}\alpha}{\sqrt[3]{9}(\pi^{2}+4\alpha)}.
\end{equation}
\end{lemma}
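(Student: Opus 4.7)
The plan is to directly substitute the explicit torsional rigidity from Example~\ref{exa:interval_graph} and the asymptotically sharp lower bound on $\lambda_1(\mathcal{J})$ recalled just above, and then reduce the resulting inequality to an elementary algebraic estimate that exploits the hypothesis $\lvert\mathcal{J}\rvert\geq 1$.

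Writing $L:=\lvert\mathcal{J}\rvert$ for brevity, Example~\ref{exa:interval_graph} gives
\[
T(\mathcal{J})=\frac{L^3}{3}+\frac{L^2}{\alpha}=\frac{L^{2}(\alpha L+3)}{3\alpha},
\]
so that $T(\mathcal{J})^{2/3}=\dfrac{L^{4/3}(\alpha L+3)^{2/3}}{3^{2/3}\alpha^{2/3}}$. Combining with the stated lower bound on $\lambda_1$, we get
\[
\lambda_1(\mathcal{J},\{0,\alpha\})\,T(\mathcal{J})^{2/3}\;\geq\;\frac{\pi^{2}\alpha}{L(\pi^{2}+4\alpha L)}\cdot\frac{L^{4/3}(\alpha L+3)^{2/3}}{\sqrt[3]{9}\,\alpha^{2/3}}=\frac{\pi^{2}\alpha^{1/3}L^{1/3}(\alpha L+3)^{2/3}}{\sqrt[3]{9}\,(\pi^{2}+4\alpha L)}.
\]

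Thus the claimed inequality reduces to showing
\[
\alpha^{1/3}L^{1/3}(\alpha L+3)^{2/3}(\pi^{2}+4\alpha)\;\geq\;\alpha\,(\pi^{2}+4\alpha L),
\]
or equivalently, after dividing by $\alpha$,
\[
L^{1/3}\Bigl(L+\tfrac{3}{\alpha}\Bigr)^{2/3}(\pi^{2}+4\alpha)\;\geq\;\pi^{2}+4\alpha L.
\]
Here the hypothesis $L\geq 1$ enters twice. First, $L+\tfrac{3}{\alpha}\geq L$ yields $L^{1/3}\bigl(L+\tfrac{3}{\alpha}\bigr)^{2/3}\geq L$. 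Second, $L\geq 1$ gives
\[
L(\pi^{2}+4\alpha)=L\pi^{2}+4\alpha L\geq \pi^{2}+4\alpha L,
\]
and the chain of inequalities closes.

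There is no real obstacle: the proof is a direct substitution followed by one elementary estimate. The only subtlety worth flagging is that the hypothesis $L\geq 1$ is used in two different places (to absorb the $3/\alpha$ term inside the $2/3$-power and to pass from $L\pi^{2}$ to $\pi^{2}$), which is precisely why a uniform-in-$L$ bound is only possible under a lower length constraint as noted in the discussion preceding the statement.
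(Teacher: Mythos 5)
Your proof is correct and follows exactly the route the paper intends (the lemma is stated there without a written proof, the surrounding text indicating precisely this substitution of the explicit $T(\mathcal{J})$ and of the eigenvalue bound from \cite[Proposition A1]{ken2}, followed by simplification using $\lvert\mathcal{J}\rvert\geq 1$). One trivial remark: the hypothesis $L\geq 1$ is actually used only once, in the step $L(\pi^{2}+4\alpha)\geq \pi^{2}+4\alpha L$; the absorption $L^{1/3}(L+3/\alpha)^{2/3}\geq L$ needs only positivity of $3/\alpha$.
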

The second way would  be to get a more general inequality, i.e. modify the quantity
$\lambda_{1}(\mathcal{J},\{0, \alpha\})T(\mathcal{J},\{0, \alpha\})^{2/3}$ in order to make it independent of the length of the interval $\mathcal{J}$ as in the papers~\cite{bras,koh,pluem}.
 Unfortunately, there is no domain
analog inequality in the context of $\delta$-vertex conditions that could serve as inspiration.

	Even if a meta Kohler-Jobin inequality was conjectured, it turns out that there are more technical challenges. The core part of the proof is symmetrization of functions, that is, constructing a bridge that lets us convert functions on the graph $\mathcal{G}$ to the functions on $\mathcal{J}$  (\cite[Lemma 5.17]{pluem}). But this technique seems not directly adoptable without new ideas to $\delta$-vertex conditions. In summary, it seems not yet completely clear what a Kohler-Jobin inequality on metric graphs in the presence of $\delta$-vertex conditions should even look like and any proof seems to require more than simple modifications of existing results. In light of this, it is unsurprising, that also on domains, with Robin boundary conditions, no Kohler-Jobin inequality is known.

\end{subsection}
\end{section}


\begin{thebibliography}{99}
	
	\bibitem{ArnoldDFJM-19}
	Arnold, D.~N., David, G., Filoche, M., Jerison, D., Mayboroda, S.:
	Localization of eigenfunctions via an effective potential.
	Commun. Partial Differ. Equ. 44 (11), 1186--1216 (2019).
	
	
	\bibitem{band} Band, R., Lévy, G.: Quantum graphs which optimize the spectral gap. Ann. Henri Poincaré 18, 3269--3323 (2017)
	
	\bibitem{batka} B{\'a}tkai, A., Fijav{\v{z}}, M.K., Rhandi, A., Positive Operator Semigroups: From Finite to Infinite Dimensions, Operator Theory: Advances and Applications, Springer International Publishing (2017)
	
	\bibitem{beck} Becker S., Gregorio F., Mugnolo D.: Schrödinger and polyharmonic operators on infinite graphs: parabolic well-posedness and p-independence of spectra, J. Math. Anal. Appl. 495, 124748 (2021)
	
	  \bibitem{berk3} Berkolaiko, G., Kuchment, P.: Introduction to Quantum Graphs.Mathematical Surveys and Monographs,  vol. 186. American Mathematical Society, Providence (2013)
	\bibitem{berk1} Berkolaiko, G., Kennedy, J.B., Kurasov, P., Mugnolo, D.: Edge connectivity and the spectral gap of combinatorial and quantum graphs. J. Phys. A 50, 365201 (2017)
	
	\bibitem{berk2} Berkolaiko, G., Kennedy, J.B., Kurasov, P., Mugnolo, D.: Surgery principles for the spectral analysis of quantum graphs. Trans. Am. Math. Soc. 372, 5153--5197 (2019)
	
	\bibitem{berk4} Berkolaiko G., Ettehad M.: Three-dimensional elastic beam frames: rigid joint conditions in variational and differential formulation, Stud. Appl. Math. 148 (4), 1586--1623 (2022)

		\bibitem{berk5} Berkolaiko G., Kuchment P.: 
		 Dependence of the spectrum of a quantum graph on vertex conditions and edge lengths.
		 In: Proceedings of Symposia in Pure Mathematics 84, Am. Math. Soc. (2012) 
			
	\bibitem{bolt} Bolte J., Harrison J.: Spectral Statistics for the Dirac Operator on Graphs, J. Phys. A: Math. Gen. 36, 2747--2769 (2003)
	
	\bibitem{bras} Brasco, L.: On torsional rigidity and principal frequencies: an invitation to the Kohler-Jobin rearrangement technique. ESAIM: COCV, 20 (2), 315--338 (2014)
	
	\bibitem{car} Cardanobile S., Mugnolo D.: Parabolic systems with coupled boundary conditions, J. Diff. Equ.,  247 (4) 1229--1248 (2009)
		\bibitem{mug4} Carlson E.: Inverse Eigenvalue Problems on Directed Graphs, Trans. Am. Math. Soc.,  351,  (10), 4069--4088 (1999)
	\bibitem{ciac} Chiacchio, F., Gavitone, N., Nitsch, C. et al.: Sharp Estimates for the Gaussian Torsional Rigidity with Robin Boundary Conditions, Potential Anal., 59, 1107--1116 (2023).
		\bibitem{colla} Colladay, D., Kaganovskiy, L., McDonald, P.: Torsional rigidity, isospectrality and quantum graphs, J. Phys. A, 50, 035201 (2016)
	\bibitem{colin} Colin de Verdière, Y.: Semi-classical measures on quantum graphs and the Gauß map of the determinant manifold, Ann. Henri Poincaré, 16, 347--364 (2015)
	

	
	\bibitem{david} David G., Filoche M., Mayboroda S.: Integrated density of states, Localization, Weyl law, Landscape, Adv. Math.,  390, 107946 (2021)
	

	\bibitem{DuefelKennedyMugnoloPluemerTaeufer-22} 
	Düfel, M., Kennedy, J.~B., Mugnolo, D., Plümer, M., Täufer, M.:
	Boundary Conditions Matter: On The Spectrum Of Infinite Quantum Graphs,
	arXiv:2207.04024 [math.SP] (2022)
		
	
	\bibitem{ken2} Freitas P., Kennedy J.B.:Extremal Domains and Pólya-type Inequalities for the Robin Laplacian on Rectangles and Unions of Rectangles, Int. Math. Res. Not.,  2021  (18), 13730--13782 (2019)
	
	\bibitem{Faber-23} Faber, G.: Beweis, dass unter allen homogenen Membranen von gleicher Fläche und gleicher Spannung die kreisförmige den tiefsten Grundton gibt, Sitzungsber. Bayer. Akad. Wiss. München, Math.-Phys. Kl., 169--172 (1923)
	
	\bibitem{Krahn-25} Krahn, E.: Über eine von Rayleigh formulierte Minimaleigenschaft des Kreises, Math. Ann. 94, 97--100 (1925) 
	
	\bibitem{FilocheM-12} Filoche, M. and Mayboroda, S.: Universal mechanism for Anderson and weak localization. Proc. Natl. Acad. Sci. U.S.A., 109 (37) 14761--14766 (2012)
	
	\bibitem{fried} Friedlander, L.: Extremal properties of eigenvalues for a metric graph, Ann. Inst. Fourier, 55, 199--212 (2005)
	
	\bibitem{gre1} Gregorio F., Mugnolo D.: Bi-Laplacians on graphs and networks, J. Evol. Equ., 20 (1), 191--232 (2020)
	
	\bibitem{gre2} Gregorio F., Mugnolo D.: Higher-order operators on networks: hyperbolic and parabolic theory, Integral Equ. Oper. Theory, 92, 50 (2020)
	
	\bibitem{HarrellM-20}
	E.~M. Harrell II, A.~V. Maltsev.:
	Localization and landscape functions on quantum graphs,
    Trans. Amer. Math. Soc., 373, 1701--1729 (2020)
	
	\bibitem{hus} Hussein A., Mugnolo D.: Quantum graphs with mixed dynamics: the transport/diffusion case, J. Phys. A 46 (23), 235202 (2013)
	
	\bibitem{kant} Kant U., Klauß T., Voigt J., Weber M.: Dirichlet forms for singular one-dimensional operators and on graphs. J. Evol. Equ. 9 , 637--659 (2009)
	
	\bibitem{karr} Karreskog, G., Kurasov, P., Trygg, K.I.: Schr{\"o}dinger operators on graphs: symmetrization and Eulerian cycles, Research Reports in Mathematics, Stockholm University, Department of Mathematics, 3, 14 (2015)
	
	\bibitem{KellerLW-21}
	Keller M., Lenz D., Wojciechowski R.~K.:
	Graphs and Discrete Dirichlet Spaces.
	Grundlehren der mathematischen Wissenschaften, Springer Cham (2021)
	
	
	\bibitem{KennedyMugnoloTaeufer-2024_preprint}
	Kennedy, J.~B., Mugnolo, D., Täufer, M.:
	Towards a theory of eigenvalue asymptotics on infinite metric graphs: the case of diagonal combs,
	arXiv:2403.10708 [math.SP] (2024)
	
	
		\bibitem{ken} Kennedy, J.B., Kurasov, P., Malenová, G., Mugnolo, D.: On the spectral gap of a quantum graph, Ann. Henri Poincaré 17, 2439--2473 (2016)
	
	\bibitem{koh} Kohler-Jobin, M.-T.:
	 Une méthode de comparaison isopérimétrique de fonctionnelles de domaines de la physique mathématique I. Première partie: une démonstration de la conjecture isopérimétrique $p\lambda^{2}\geq \pi j_{0}^{4}/2$ de P{\'o}lya et Szeg\H{o}, Z. Angew. Math. Phys. 29, 757--766 (1978)
	
	
	
	\bibitem{KostenkoN-19} Kostenko A., Nicolussi N.:
	 Spectral estimates for infinite quantum graphs, 
	Calc. Var. 58, 15 (2019)
	
	
	\bibitem{KostenkoMN-22} Kostenko A., Mugnolo D., Nicolussi N.:
	 Self-adjoint and Markovian extensions of infinite quantum graphs, J. Lond. Math. Soc.,  105 (2), 1262--1313 (2022)
	
	\bibitem{kost1} Kostrykin V., Potthoff J. and Schrader R.: Laplacians on metric graphs: eigenvalues, resolvents and semigroups, Quantum graphs and their applications, Contemp. Math., 415, Amer. Math. Soc., Providence, RI, 201--225 (2006)
	
	\bibitem{kost2} Kostrykin V., Potthoff J. and Schrader R.: Contraction semigroups on metric graphs, In: Analysis on Graphs and its Applications, P. Exner (ed.) et al., Amer. Math. Soc., Providence, RI, Proc. Symp. Pure Math. 77, 423--458 (2008)
	
	\bibitem{kuch1} Kuchment P.: Quantum graphs: I. Some basic structures, Waves Random Media, 14 (1), S107--S128 (2004)
	
	\bibitem{kuch2} Kuchment P.: Quantum graphs: an introduction and a brief survey, In: Analysis on Graphs and its Applications, P. Exner (ed.) et al., Amer. Math. Soc., Providence, RI, Proc. Symp. Pure Math. 77, 291--314 (2008)
	
	\bibitem{MazonT-23}
	Mazon J.~M., Toledo J.: Torsional rigidity in random walk spaces,
	arXiv:2302.14351 [math.AP] (2023)
	
	\bibitem{mug1} Mugnolo D.: Semigroup Methods for Evolution Equations on Networks, Underst. Compl. Syst. Springer-Verlag, Berlin (2014)
	
	\bibitem{mug2} Mugnolo D.:
	 Pointwise eigenvector estimates by landscape functions: Some variations on the Filoche–Mayboroda–van den Berg bound, Math. Nachr. 297 (5), 1749--1771 (2023)
	
	\bibitem{mug3} Mugnolo, D.: 
	What is actually a metric graph?, arXiv:1912.07549 [math.CO] (2019)
	
	\bibitem{pluem} Mugnolo, D., Plümer, M.:
	On torsional rigidity and ground-state energy of compact quantum graphs, Calc. Var., 62, 27 (2023)
	
	\bibitem{nica} Nicaise S.:
	 Spectre des r{\'e}seaux topologiques finis, Bull. Sci. Math. II. S{\'e}r., 111, 401--413 (1987)
	
	\bibitem{ou} Ouhabaz, E. M.: 
	Analysis of Heat Equations on Domains, Princeton University Press, Princeton (2005)
	

	
	\bibitem{pol1} P{\'o}lya, G.: 
	Torsional Rigidity, Principal Frequency, Electrostatic Capacity and Symmetrization, Quarterly of Applied Mathematics,  6 (3), 267--277 (1948)
	
	\bibitem{pol2} P{\'o}lya G., Szeg\H{o} G.: 
	Isoperimetric Inequalities in Mathematical Physics, Annals of Mathematics Studies 27, Princeton University Press, Princeton (1951)
	
	\bibitem{pol3} P{\'o}lya, G., Weinstein A.: On the Torsional Rigidity of Multiply Connected Cross-Sections, Ann. Math., Second Series,  52 (1), 154--163 (1950)
	
	\bibitem{roh} Rohleder, J., Seifert, C.:
	Spectral Theory for Schr\"odinger operators on compact metric graphs with $\delta$ and $\delta'$ couplings: a survey, arXiv:2303.01924[math.SP]  (2023)
	
	\bibitem{schub} Schubert C., Seifert C., Voigt J., Waurick M.:
	 Boundary systems and (skew-)\-self-adjoint operators on infinite metric graphs, Math. Nachr. 288(14-15), 1776-–1785 (2015)
	
	\bibitem{sei} Seifert C., Voigt J.: 
	Dirichlet forms for singular diffusion on graphs, Oper. Matrices 5 (4), 723--734 (2011)
	
	\bibitem{Steinerberger-17}
	Steinerberger, S.:
	Localization of Quantum States and Landscape Function,
	Proc. Am. Math. Soc., 145 (7), 2895--2905 (2017)
	
	
	\bibitem{vander} van den Berg M., Bucur D.: 
	On the torsion function with Robin or Dirichlet boundary conditions, J. Func. Anal., 266, 3, 1647--1666 (2014)
	
	\bibitem{wau} Waurick M., Kaliske M.: 
	On the well-posedness of evolutionary equations on infinite graphs, Spectral theory, mathematical system theory, evolution equations, differential and difference equations,  Oper. Theory Adv. Appl., 221, Birk\"auser/Springer Basel AG, Basel 653--666 (2012)
	

	
\end{thebibliography}
\end{document}